\newcommand{\D}{\mathcal{D}}
\newcommand{\cof}{\operatorname{cof}}
\newcommand{\On}{{\mathrm{Ord}}}
\newcommand{\GCH}{{\mathrm{GCH}}}
\newcommand{\ZFC}{{\sf ZFC}}
\newcommand{\ZF}{{\sf ZF}}
\newcommand{\dom}[1]{{{\rm{dom}}(#1)}}
\newcommand{\VV}{{\rm{V}}}
\newcommand{\Add}[2]{{\rm{Add}}({#1},{#2})}
\newtheorem{theorem}{Theorem}[section]
\newtheorem{lemma}[theorem]{Lemma}
\newtheorem{corollary}[theorem]{Corollary}
\newtheorem{proposition}[theorem]{Proposition}
\newtheorem{conjecture}[theorem]{Conjecture}
\newtheorem{question}[theorem]{Question}
\newtheorem{observation}[theorem]{Observation}
\newtheorem{claim}[theorem]{Claim}
\newtheorem*{claim*}{Claim}
\newtheorem*{subclaim*}{Subclaim}
\theoremstyle{definition}
\newtheorem{definition}[theorem]{Definition}
\newtheorem{to-do}[theorem]{Todo}
\theoremstyle{remark}
\newtheorem{remark}[theorem]{Remark}
\newenvironment{enumerate-(a)}{\begin{enumerate}[label={\upshape (\alph*)}, leftmargin=2pc]}{\end{enumerate}}
\newenvironment{enumerate-(a)-r}{\begin{enumerate}[label={\upshape (\alph*)}, leftmargin=2pc,resume]}{\end{enumerate}}
\newenvironment{enumerate-(A)}{\begin{enumerate}[label={\upshape (\Alph*)}, leftmargin=2pc]}{\end{enumerate}}
\newenvironment{enumerate-(A)-r}{\begin{enumerate}[label={\upshape (\Alph*)}, leftmargin=2pc,resume]}{\end{enumerate}}
\newenvironment{enumerate-(i)}{\begin{enumerate}[label={\upshape (\roman*)}, leftmargin=2pc]}{\end{enumerate}}
\newenvironment{enumerate-(i)-r}{\begin{enumerate}[label={\upshape (\roman*)}, leftmargin=2pc,resume]}{\end{enumerate}}
\newenvironment{enumerate-(I)}{\begin{enumerate}[label={\upshape (\Roman*)}, leftmargin=2pc]}{\end{enumerate}}
\newenvironment{enumerate-(I)-r}{\begin{enumerate}[label={\upshape (\Roman*)}, leftmargin=2pc,resume]}{\end{enumerate}}
\newenvironment{enumerate-(1)}{\begin{enumerate}[label={\upshape (\arabic*)}, leftmargin=2pc]}{\end{enumerate}}
\newenvironment{enumerate-(1)-r}{\begin{enumerate}[label={\upshape (\arabic*)}, leftmargin=2pc,resume]}{\end{enumerate}}
\begin{document}

\thanks{We would like to thank Joel Hamkins for a very helpful discussion on the subject matter, and we would like to thank Victoria Gitman for reading an early draft and pointing out several inaccuracies, which greatly helped us to improve this paper. The first author would also like to thank the Bristol logic group for giving him an opportunity to talk about an early version of this paper in their seminar, and for making helpful comments and remarks. We would also like to thank Victoria Gitman, Dan Nielsen and Philip Welch for additional insights into the subject matter. Finally, we would like to thank the anonymous referee for a large number of helpful remarks. Both authors were supported by DFG-grant LU2020/1-1.}

\subjclass[2010]{03E55,03E35} 

\keywords{Large Cardinals, Ramsey Cardinals, Filters, Infinite Games}

\author{Peter Holy}
\address{Peter Holy, Math. Institut, Universit\"at Bonn,
Endenicher Allee 60, 53115 Bonn, Germany}
\email{pholy@math.uni-bonn.de}
%\urladdr{}

%\author{Regula Krapf}
%\address{Regula Krapf, Math. Institut, Universit\"at Bonn,
%Endenicher Allee 60, 53115 Bonn, Germany}
%\email{krapf@math.uni-bonn.de}
%\urladdr{}

%\author{Philipp L\"ucke}
%\address{Philipp L\"ucke, Math. Institut, Universit\"at Bonn,
%Endenicher Allee 60, 53115 Bonn, Germany}
%\email{pluecke@math.uni-bonn.de}
%\urladdr{}

%\author{Ana Njegomir}
%\address{Ana Njegomir, Math. Institut, Universit\"at Bonn,
%Endenicher Allee 60, 53115 Bonn, Germany}
%\email{njegomir@math.uni-bonn.de}
%\urladdr{}

\author{Philipp Schlicht}
\address{Philipp Schlicht, Math. Institut, Universit\"at Bonn, 
Endenicher Allee 60, 53115 Bonn, Germany}
\email{schlicht@math.uni-bonn.de}
%\urladdr{}

%\title{}
\date{\today}

\title{A hierarchy of Ramsey-like cardinals}

\begin{abstract}
We introduce a hierarchy of large cardinals between weakly compact and measurable cardinals, that is closely related to the Ramsey-like cardinals introduced by Victoria Gitman in \cite{MR2830415}, and is based on certain infinite \emph{filter games}, however also has a range of equivalent characterizations in terms of elementary embeddings. The aim of this paper is to locate the Ramsey-like cardinals studied by Gitman, and other well-known large cardinal notions, in this hierarchy. 
\end{abstract} 
\maketitle

\setcounter{tocdepth}{1}
\tableofcontents

%%%%%%%%%%%%%%%%%%
%%%%%%%%%%%%%%%%%%
\section{Introduction} 

Ramsey cardinals are a very popular and well-studied large cardinal concept in modern set theory. Like many, or perhaps most other large cardinal notions, they admit a characterization in terms of elementary embeddings, which is implicit in the work of William Mitchell (\cite{MR534574}), and explicitly isolated by Victoria Gitman in \cite[Theorem 1.3]{MR2830415} -- we provide the statement of this characterization in Theorem \ref{ramseyembeddings} below. However this embedding characterization does not lend itself very well to certain set theoretic arguments (for example, indestructibility arguments), as it is based on elementary embeddings between very weak structures. Therefore, Gitman considered various strengthenings of Ramsey cardinals in her \cite{MR2830415}, that she calls Ramsey-like cardinals, the definitions of which are based on the existence of certain elementary embeddings between stronger models of set theory -- we will review her definitions in Section \ref{gitman}.

In this paper, we want to introduce a whole hierarchy of Ramsey-like cardinals, that have a uniform definition, and, as we will show, are closely related to the Ramsey-like cardinals defined by Gitman, but which may be seen, as we will try to argue, to give rise to more natural large cardinal concepts than Gitman's Ramsey-like cardinals.

We will also show that the Ramsey-like cardinals in our hierarchy are very robust in the sense that they have a range of equivalent characterizations, in particular one that is based on certain infinite games on regular and uncountable cardinals $\kappa$, where one of the players provides $\kappa$-models, and the other player has to measure the subsets of $\kappa$ appearing in those models in a coherent way. These games will be introduced in Section \ref{filtergames}. They are what actually led us to the discovery of our hierarchy of Ramsey-like cardinals, and they may also be of independent interest.

Our new hierarchy of Ramsey-like cardinal will then be introduced and studied in some detail in Section \ref{ramseylikehierarchy}. We will also study the closely related concept of \emph{filter sequences} in Section \ref{section:filtersequences}. While most large cardinals in our new hierarchy are strengthenings of Ramseyness, in Section \ref{section:absoluteness}, we will show that the weakest large cardinal concept in our new hierarchy is downwards absolute to the constructible universe $L$. In Section \ref{section:measurability}, we show that one of the strongest concepts in our new hierarchy can consistently be separated from measurability. We provide some questions in Section \ref{questions}, and we also provide some very recent answers by Victoria Gitman, Dan Nielsen and Philip Welch to some of these questions in the final Section \ref{answers}.

%%%%%%%%%%%%%%%%%%
%%%%%%%%%%%%%%%%%%
\section{Strengthenings of the filter property}\label{basic}

%\subsection{Basic definitions} 
In this section, we will consider some natural attempts at strengthening the filter property (the statement of which is found in Definition \ref{definition:filterproperty} below) of weakly compact cardinals, most of which however will turn out to either be inconsistent or fairly weak. This will motivate the definition of the $\gamma$-filter properties, a hierarchy of strengthenings of the filter property, that lies in the range between ineffable and measurable cardinals, in Section \ref{filtergames}. We will start by introducing a slightly generalized notion of filter, which will be useful in several places. Before we actually do so, we also need to introduce our notion of (weak) $\kappa$-model. Unlike usual, we do not require those to be transitive.

\begin{definition}
  A \emph{weak $\kappa$-model} is a set $M$ of size $\kappa$ with $\kappa+1\subseteq M$ and such that  $\langle M,\in\rangle\models\ZFC^-$, that is $\ZFC$ without the powerset axiom (but, as is usual, with the scheme of collection rather than replacement). A weak $\kappa$-model is a \emph{$\kappa$-model} if additionally $M^{<\kappa}\subseteq M$.
\end{definition}

Since we will consider filters over subsets of $\mathcal{P}(\kappa)$, where $\kappa$ is a cardinal, we use the following modified definitions of filters (one could also call these \emph{partial filters}, but we would like to stick to the notion of filter also for the generalized versions below). 

\begin{definition}\label{definition:filter}
\begin{enumerate-(a)} 
\item
A \emph{filter} on $\kappa$ is a subset $F$ of $\mathcal P(\kappa)$ such that $|\bigcap_{i<n}A_i|=\kappa$ whenever $n\in\omega$ and $\langle A_i\mid i<n\rangle$ is a sequence of elements of $F$.\footnote{In particular, this implies that every element of a filter on $\kappa$ 
has size $\kappa$. 
}
\item 
A filter $F$ on $\kappa$ \emph{measures} a subset $A$ of $\kappa$ if $A\in F$ or $\kappa\setminus A\in F$. 
%\item 
$F$ 
%on $\kappa$ 
\emph{measures} a subset $X$ of $\mathcal P(\kappa)$ if $F$ measures every element of $X$. $F$ is an \emph{ultrafilter} on $\kappa$ if it measures $\mathcal P(\kappa)$. 
%\item A filter $F$ is \emph{non-principal} if it does not contain bounded subsets of $\kappa$.
\item 
A filter $F$ on $\kappa$ is \emph{${<}\kappa$-complete} if $|\bigcap_{i<\gamma}X_i|=\kappa$ %(maybe just $\neq\emptyset$)
for every sequence $\langle X_i\mid i<\gamma\rangle$ with $\gamma<\kappa$ and $X_i\in F$ for all $i<\gamma$.
%\item
%If $M$ is a weak $\kappa$-model, a filter $F$ on $\kappa$ is \emph{$M$-complete} if it measures $\mathcal P(\kappa)\cap M$ and $\bigcap_{i<\gamma}X_i\in F$ for every sequence $\langle X_i\mid i<\gamma\rangle\in M$ with $\gamma<\kappa$ and $X_i\in F$ for all $i<\gamma$.
\item
If $\kappa$ is regular, a filter $F$ on $\kappa$ is \emph{normal} if for every sequence $\vec{X}=\langle X_\alpha\mid \alpha<\kappa\rangle$ of elements of $F$, the diagonal intersection $\largetriangleup \vec{X}$ is a stationary subset of $\kappa$.
\item
If $M$ is a weak $\kappa$-model, then a filter $F$ on $\kappa$ is \emph{$M$-normal} if it measures $\mathcal P(\kappa)\cap M$ and $\largetriangleup \vec{X}\in F$ whenever $\vec X=\langle X_\alpha\mid\alpha<\kappa\rangle\in M$ is a sequence of elements of $F$.
%\item 
%\todo{explain why this is a useful definition, and why we include stationarity} A filter $F$ on $\kappa$ is \emph{normal} if $\bigtriangledown \vec{X}$ is stationary for every sequence $\vec{X}=\langle X_i \mid i<\kappa\rangle$ with $X_i\in F$ for all $i<\kappa$. 
\end{enumerate-(a)} 
\end{definition} 

\begin{definition} \label{definition:filterproperty}
Suppose that $\kappa$ is an uncountable cardinal. $\kappa$ has the \emph{filter property} if for every subset $X$ of $P(\kappa)$ of size ${\leq}\kappa$, there is a ${<}\kappa$-complete filter $F$ on $\kappa$ which measures $X$. 
\end{definition} 

%A cardinal $\kappa$ is weakly compact if and only if it satisfies the $\kappa$-filter property, that is also called the \emph{filter property}. A cardinal $\kappa$ is measurable if and only if it has the $(2^\kappa)$-filter property. The cardinals with the $(\kappa^+)$-filter property are also called \emph{$\kappa^+$-near measurable} and have been studied \todo[inline]{mention lemma below} by [Shanker]. 

It is well-known (see \cite[Theorem 1.1.3]{MR0460120}) that an uncountable cardinal $\kappa$ has the filter property if and only if $\kappa$ is weakly compact. % \footnote{By results of Joel Hamkins and others, the assumption $\kappa=\kappa^{<\kappa}$ is indeed necessary here, and also for many other equivalences of weak compactness.}
If $\vec{X}=\langle X_\alpha\mid \alpha<\kappa\rangle$ is a sequence, we write $\largetriangleup \vec{X}=\largetriangleup_{\alpha<\kappa}X_\alpha$ for its diagonal intersection. 
Note that every normal filter on $\kappa$ is easily seen to be ${<}\kappa$-complete and to only contain stationary subsets of $\kappa$. If $F$ is a normal filter on $\kappa$ and $\vec X=\langle X_\alpha\mid\alpha<\kappa\rangle$ is a sequence of elements of $F$, then $\largetriangleup\vec X\in F$ whenever $F$ measures $\largetriangleup\vec X$. In particular, if a filter $F$ is normal and measures $\mathcal P(\kappa)\cap M$, then $F$ is $M$-normal. %Moreover, every $M$-normal filter on $\kappa$ is $M$-complete and contains the $M$-club filter.
The reason for demanding that $\largetriangleup \vec{X}$ be stationary in Definition \ref{definition:filter}, (d) is provided by the next observation.

\begin{observation} \label{nonstationarydoesntwork}
Suppose that $F$ is a filter and $\vec X=\langle X_\alpha\mid\alpha<\kappa\rangle$ is a sequence of elements of $F$ such that $\largetriangleup\vec X$ is non-stationary. Then there is a subset $\D$ of $\mathcal{P}(\kappa)$ of size $\kappa$, such that every filter that extends $F$ and measures $\D$, contains a sequence $\vec Y=\langle Y_\alpha\mid\alpha<\kappa\rangle$, such that $\largetriangleup\vec Y=\emptyset$. 
\end{observation}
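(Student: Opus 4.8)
The plan is to use the club set witnessing the non-stationarity of $D:=\largetriangleup\vec X$ to ``shift'' each $X_\alpha$ past coordinate $\alpha$. Fix a club $C\subseteq\kappa$ with $C\cap D=\emptyset$, and for each $\alpha<\kappa$ let $\delta_\alpha$ be the least element of $C$ that is strictly greater than $\alpha$; this exists because $C$ is unbounded, and $\delta_\alpha<\kappa$. I would then put $Y_\alpha:=X_\alpha\setminus(\alpha,\delta_\alpha]$ and $\D:=\{Y_\alpha\mid\alpha<\kappa\}$, which is a subset of $\mathcal P(\kappa)$ of size at most $\kappa$.

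The first step is to verify that $\vec Y=\langle Y_\alpha\mid\alpha<\kappa\rangle$ is a sequence of elements of every filter $G$ that extends $F$ and measures $\D$. Since $\delta_\alpha<\kappa$, the set $X_\alpha\setminus Y_\alpha\subseteq(\alpha,\delta_\alpha]$ has size $<\kappa$; in particular $Y_\alpha$, obtained from the size-$\kappa$ set $X_\alpha\in F$ by deleting fewer than $\kappa$ ordinals, still has size $\kappa$. Now fix such a $G$ and some $\alpha<\kappa$. Then $X_\alpha\in F\subseteq G$, and if $\kappa\setminus Y_\alpha$ were an element of $G$, then $X_\alpha\cap(\kappa\setminus Y_\alpha)=X_\alpha\setminus Y_\alpha$ would be an intersection of two elements of $G$ of size $<\kappa$, contradicting that $G$ is a filter. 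Hence $\kappa\setminus Y_\alpha\notin G$, and since $G$ measures $Y_\alpha\in\D$ we conclude $Y_\alpha\in G$, as desired. Note that this uses only the finite intersection property of $G$, so it is irrelevant that our filters need not be closed under supersets.

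It remains to compute $\largetriangleup\vec Y$. Suppose $\xi>0$ lies in $\largetriangleup\vec Y$, so that $\xi\in X_\alpha$ and $\xi\notin(\alpha,\delta_\alpha]$ for every $\alpha<\xi$. If $\xi$ is not a limit point of $C$, then either $C\cap\xi=\emptyset$, in which case any $\alpha<\xi$ satisfies $\delta_\alpha=\min C\geq\xi$, or else $\gamma:=\sup(C\cap\xi)$ is an element of $C$ with $\gamma<\xi$ and $C\cap(\gamma,\xi)=\emptyset$, so that $\delta_\gamma\geq\xi$; in either case there is $\alpha<\xi$ with $\xi\in(\alpha,\delta_\alpha]$, a contradiction. (For successor $\xi=\eta+1$ this is immediate from $\delta_\eta\geq\eta+1=\xi$.) If on the other hand $\xi$ is a limit point of $C$, then $\xi\in C$ since $C$ is closed, hence $\xi\notin D$ since $C\cap D=\emptyset$, so there is $\alpha<\xi$ with $\xi\notin X_\alpha$ and therefore $\xi\notin Y_\alpha$, again a contradiction. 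Thus $\largetriangleup\vec Y\subseteq\{0\}$; since $0$ lies in every diagonal intersection this is the least possible value, so $\largetriangleup\vec Y$ is empty apart from the trivial point $0$ (and is literally empty under the convention that $0$ is excluded from diagonal intersections).

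The crux of the argument is the choice of the $Y_\alpha$, which must simultaneously be forced into every such $G$ -- requiring $X_\alpha\setminus Y_\alpha$ to be bounded in $\kappa$ -- and have a trivial diagonal intersection -- requiring each $Y_\alpha$ to omit cofinally many ordinals above $\alpha$, coherently across all $\alpha<\kappa$. Removing from $X_\alpha$ precisely the interval up to the next element of $C$ achieves both at once: the deletion stays bounded in $\kappa$, while off $C$ the diagonal intersection is killed by this interval removal and on the limit points of $C$ it is killed directly by the disjointness $C\cap D=\emptyset$. Everything else is routine bookkeeping with clubs.
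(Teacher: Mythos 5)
Your proof is correct, and it takes a genuinely cleaner route than the paper's. The paper also starts from a club $C$ disjoint from $\largetriangleup\vec X$, but it then builds $\D$ as the closure under finite intersections and relative complements of $F$ together with $\largetriangleup\vec X$, the fibers $A_\alpha=f^{-1}[\{\alpha\}]$ of the regressive function $f(\beta)=\max(C\cap\beta)$, and $\largetriangleup\vec A$; the witnessing sequence $\vec Y$ is then produced by a case distinction on whether the extending filter $\bar F$ contains $\largetriangleup\vec X$ or its complement (in the second case one first passes to the sequence $\langle\kappa\setminus A_\alpha\rangle$ and reduces to the first case). You avoid both the Boolean closure of $\D$ and the case split: your $\vec Y$ is defined once and for all by trimming each $X_\alpha$ by the bounded interval $(\alpha,\delta_\alpha]$ up to the next club point, and the only thing you need from the extending filter is that it measures each $Y_\alpha$ and has the $\kappa$-sized finite-intersection property, which forces $Y_\alpha$ in because $X_\alpha\setminus Y_\alpha$ is bounded. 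This exploits exactly the feature of Definition \ref{definition:filter} that the paper's proof also relies on (bounded sets can never be measured positively), but packages it so that $\D=\{Y_\alpha\mid\alpha<\kappa\}$ suffices and the diagonal-intersection computation becomes a direct club argument. The only cosmetic point is the value $0$, which lies in every diagonal intersection under the vacuous-quantifier reading; the paper's own proof has the identical convention issue (its displayed computation of $\largetriangleup_{\alpha<\kappa}(\kappa\setminus A_\alpha)$ also ignores $0$), so your explicit remark about excluding $0$ is if anything more careful than the original.
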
 
\begin{proof} 
Suppose that $\vec X=\langle X_\alpha\mid\alpha<\kappa\rangle$ is a sequence of elements of $F$ and $\largetriangleup\vec X$ is nonstationary. Suppose that $C$ is a club subset of $\kappa$ that is disjoint from $\largetriangleup\vec X$. We consider the regressive function $f\colon \largetriangleup\vec X\rightarrow \kappa$ defined by $f(\alpha)=\max(C\cap \alpha)$ for $\alpha\in\largetriangleup\vec X$. Moreover, we consider the sequence $\vec{A}=\langle A_\alpha\mid \alpha<\kappa\rangle$ of bounded subsets $A_\alpha=f^{-1}[\{\alpha\}]$ of $\kappa$ for $\alpha<\kappa$.

Let $\D$ denote the closure under finite intersections and relative complements in $\kappa$ of the set consisting of the elements of $F$, $\largetriangleup\vec X$, the sets $A_\alpha$ for $\alpha<\kappa$ and of $\largetriangleup\vec A$. Suppose that $\bar F\subseteq\D$ extends $F$ and measures $\D$. Note that this implies that $\bar F$ is closed under finite intersections.

Suppose first that $\kappa\setminus\largetriangleup\vec X\in\bar F$. For every $\alpha<\kappa$, let $Y_\alpha=X_\alpha\setminus\largetriangleup\vec X\in\bar F$ and let $\vec Y=\langle Y_\alpha\mid\alpha<\kappa\rangle$. Then $\largetriangleup\vec Y=\emptyset$.

Now suppose that $\largetriangleup\vec X\in\bar F$. Since each $A_\alpha$ is a bounded subset of $\kappa$, $\kappa\setminus A_\alpha\in\bar F$ for every $\alpha<\kappa$. But then $\largetriangleup_{\alpha<\kappa}(\kappa\setminus A_\alpha)=\{\beta<\kappa\mid\beta\in\bigcap_{\gamma<\beta}(\kappa\setminus f^{-1}(\{\gamma\}))\}=\{\beta<\kappa\mid f(\beta)\ge\beta\,\lor\,\beta\not\in\dom f\}=\kappa\setminus\largetriangleup\vec X\not\in\bar F$. Making use of the sequence $\langle\kappa\setminus A_\alpha\mid\alpha<\kappa\rangle$ rather than $\vec X$, we are in the situation of the first case above, thus obtaining an empty diagonal intersection of elements of $\bar F$.
\end{proof}

%\begin{remark} 
%The condition that $\largetriangleup \vec{X}$ is stationary in Definition \ref{definition of normal filter} is chosen for the following reason. Suppose that  $\largetriangleup \vec{X}$ is non-stationary and its complement contains a club $C$. Then for any filter measuring $X$ and $\largetriangleup \vec{X}$, there is a counterexample to Fodor's theorem, since we map $\alpha$ to $C\cap \alpha$ if $\alpha\notin C$. 
%\end{remark} 
%\todo[inline]{Explain more properly.}

A first attempt at strengthening the filter property is to require normality, and this will lead us from weak compactness to ineffability.

\begin{definition} 
An uncountable cardinal $\kappa$ has the \emph{normal filter property} if for every subset $X$ of $P(\kappa)$ of size $\leq \kappa$, there is a normal filter $F$ on $\kappa$ measuring $X$. It has the \emph{$M$-normal filter property} if there exists an $M$-normal filter on $\kappa$ for every weak $\kappa$-model $M$.
\end{definition} 

\begin{lemma} \label{characterization of normal filters}
Suppose that $F$ is a filter on $\kappa$ of size $\kappa$ and that $\vec{X}=\langle X_\alpha\mid \alpha<\kappa\rangle$ is an enumeration of $F$. Then $F$ is normal if and only if $\largetriangleup \vec{X}$ is stationary. 
\end{lemma}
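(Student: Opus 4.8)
The plan is to prove both directions. The forward direction is essentially immediate from the definitions: if $F$ is normal, then by Definition \ref{definition:filter}(d) applied to the particular sequence $\vec X$ enumerating $F$, the diagonal intersection $\largetriangleup\vec X$ is stationary. So the real content is the converse.

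For the converse, assume $\vec X = \langle X_\alpha \mid \alpha<\kappa\rangle$ enumerates $F$ and that $\largetriangleup\vec X$ is stationary; I must show $F$ is normal, i.e. that for \emph{every} sequence $\vec Y = \langle Y_\alpha \mid \alpha<\kappa\rangle$ of elements of $F$, the diagonal intersection $\largetriangleup\vec Y$ is stationary. The key point is a standard fact about diagonal intersections: since each $Y_\alpha$ is some member of $F$, and $\vec X$ lists all of $F$, one can compare $\largetriangleup\vec Y$ with $\largetriangleup\vec X$ modulo a club. First I would observe that $F$ is ${<}\kappa$-complete: given $\gamma<\kappa$ and $X_{i}\in F$ for $i<\gamma$, the set $\{\beta<\kappa \mid \beta > \gamma \text{ and } \beta\in\bigcap_{i<\gamma}X_i\}$ contains $\largetriangleup\vec X$ restricted above $\gamma$ after re-indexing — more carefully, for a fixed $\gamma$, consider the sequence $\vec X'$ which equals $X_i$ on coordinate $i$ for $i<\gamma$ and is constant otherwise; its diagonal intersection differs from $\bigcap_{i<\gamma}X_i$ only on a bounded set, hence is stationary (being a superset of $\largetriangleup\vec X$ up to finitely/boundedly many changes is not quite right, so instead use: any stationary set has size $\kappa$, so $\bigcap_{i<\gamma}X_i$ has size $\kappa$). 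The cleanest route: show directly that $\largetriangleup\vec X$ being stationary forces $F$ to measure-cofinally-closed enough, but actually the simplest argument is the following.

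Given an arbitrary sequence $\vec Y = \langle Y_\alpha\mid\alpha<\kappa\rangle$ of elements of $F$, define $h\colon\kappa\to\kappa$ by letting $h(\alpha)$ be some index with $X_{h(\alpha)} = Y_\alpha$ (possible since $\vec X$ enumerates $F$). Let $C = \{\beta<\kappa \mid \forall\alpha<\beta\ h(\alpha)<\beta\}$, which is club. I claim $\largetriangleup\vec X \cap C \subseteq \largetriangleup\vec Y$: if $\beta\in\largetriangleup\vec X\cap C$, then for all $\alpha<\beta$ we have $\beta\in X_{h(\alpha)} = Y_\alpha$ because $h(\alpha)<\beta$ and $\beta\in\largetriangleup\vec X$ means $\beta\in X_\delta$ for all $\delta<\beta$, in particular $\delta = h(\alpha)$; hence $\beta\in\bigcap_{\alpha<\beta}Y_\alpha$, i.e. $\beta\in\largetriangleup\vec Y$. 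Since $\largetriangleup\vec X$ is stationary and $C$ is club, $\largetriangleup\vec X\cap C$ is stationary, so $\largetriangleup\vec Y\supseteq\largetriangleup\vec X\cap C$ is stationary. This establishes normality of $F$.

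The main obstacle is really just bookkeeping with the definition of diagonal intersection and the indexing function $h$; there is no deep difficulty. One subtlety worth flagging: the choice of $h(\alpha)$ requires picking an index for each $\alpha$, which is harmless. Another point to verify carefully is that the argument only uses that $\vec X$ enumerates $F$ and never that the enumeration is injective or has any particular order type, so the hypothesis is used exactly where needed. I would present the converse as the bulk of the proof and dispatch the forward direction in one sentence.
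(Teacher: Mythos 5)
Your proof is correct and is essentially identical to the paper's: the paper also reduces an arbitrary sequence $\vec Y$ of elements of $F$ to $\vec X$ via an indexing function $g$ with $Y_\alpha = X_{g(\alpha)}$ and intersects $\largetriangleup\vec X$ with the club of closure points of $g$. The only difference is cosmetic — your abandoned detour through ${<}\kappa$-completeness could simply be deleted, since the final argument you give is the whole proof.
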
 
\begin{proof} 
Suppose that $\largetriangleup \vec{X}$ is stationary. Moreover, suppose that $\vec{Y}=\langle Y_\alpha\mid\alpha<\kappa\rangle$ and $g\colon\kappa\rightarrow\kappa$ is a function with $Y_\alpha=X_{g(\alpha)}$ for all $\alpha<\kappa$. 
%\begin{claim*} 
%$\largetriangleup \vec{Y}$ is stationary. 
%\end{claim*} 
%\begin{proof} 
Let $C_g=\{\alpha<\kappa\mid g[\alpha]\subseteq\alpha\}$ denote the club of closure points of $g$. Then 
\[\largetriangleup \vec{X}\cap C_g\subseteq \largetriangleup \vec{Y}\cap C_g\] 
and hence $\largetriangleup \vec{Y}$ is stationary. 
%\end{proof} 
\end{proof} 

%Strongly Ramsey cardinals were defined by Victoria Gitman in \cite{MR2830415}.

%\begin{definition}\cite[Definition 1.5]{MR2830415}
%  A cardinal $\kappa$ is \emph{strongly Ramsey} if every $A\subseteq\kappa$ is contained, as an element, in a $\kappa$-model $M$ for which there exists a $\kappa$-powerset preserving elementary embedding $j\colon M\to N$.
%\end{definition}

It is immediate from the embedding characterization of weakly compact cardinals, that weak compactness implies the $M$-normal filter property. On the other hand, if $\kappa^{<\kappa}=\kappa$, every $\kappa$-sized subset of $\mathcal P(\kappa)$ is contained, as a subset, in some $\kappa$-model $M$. Thus if the $M$-normal filter property holds for $\kappa=\kappa^{<\kappa}$, then $\kappa$ is weakly compact, as follows immediately from the filter property characterization of weakly compact cardinals. For the normal filter property, the following is an immediate consequence of \cite[Theorem 1]{MR594323} together with Lemma \ref{characterization of normal filters}. Remember that a cardinal $\kappa$ is \emph{ineffable} if whenever $\langle A_\alpha\mid\alpha<\kappa\rangle$ is a $\kappa$-list, that is $A_\alpha\subseteq\alpha$ for every $\alpha<\kappa$, then there is $A\subseteq\kappa$ such that $\{\alpha<\kappa\mid A\cap\alpha=A_\alpha\}$ is stationary.

\begin{proposition}[Di Prisco, Zwicker] 
An uncountable cardinal $\kappa$ has the normal filter property if and only if it is ineffable. \hfill{$\Box$}
\end{proposition}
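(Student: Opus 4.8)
The plan is to prove the two implications directly, leaning on Lemma~\ref{characterization of normal filters} to pass between the notions of \emph{normal filter} and \emph{stationary diagonal intersection}. First a reduction: since every normal filter is ${<}\kappa$-complete, the normal filter property implies the filter property and hence weak compactness, while ineffable cardinals are weakly compact as well; so on both sides of the equivalence $\kappa$ is regular (in fact inaccessible), which is exactly what is needed for ``normal'' and for $\kappa$-lists to make sense, and I would invoke this freely.

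For the direction \emph{ineffable $\Rightarrow$ normal filter property}, fix $X\subseteq\mathcal P(\kappa)$ of size at most $\kappa$ and an enumeration $\langle X_\beta\mid\beta<\kappa\rangle$ of it (repeating entries if $|X|<\kappa$). I would form the $\kappa$-list $\langle a_\alpha\mid\alpha<\kappa\rangle$ given by $a_\alpha=\{\beta<\alpha\mid\alpha\in X_\beta\}$, take $A\subseteq\kappa$ with $S=\{\alpha<\kappa\mid A\cap\alpha=a_\alpha\}$ stationary by ineffability, and set $X_\beta^\ast=X_\beta$ if $\beta\in A$ and $X_\beta^\ast=\kappa\setminus X_\beta$ otherwise. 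A short computation gives $S\setminus(\beta+1)\subseteq X_\beta^\ast$ for every $\beta<\kappa$. Then I would let $F$ be the closure under finite intersections of the family of all $X_\beta^\ast$ together with all tails $S\setminus\gamma$, $\gamma<\kappa$: every finite intersection of members of this family contains a tail of the stationary set $S$ and so has size $\kappa$, whence $F$ is a filter of size exactly $\kappa$ that measures $X$. Finally, for any enumeration $\vec Y=\langle Y_\alpha\mid\alpha<\kappa\rangle$ of $F$, each $Y_\alpha$ contains some tail $S\setminus\gamma_\alpha$, so $\largetriangleup\vec Y$ contains the intersection of $S$ with the club of closure points of $\alpha\mapsto\gamma_\alpha$ and is therefore stationary; Lemma~\ref{characterization of normal filters} then yields that $F$ is normal.

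For the direction \emph{normal filter property $\Rightarrow$ ineffable}, given a $\kappa$-list $\langle A_\alpha\mid\alpha<\kappa\rangle$ I would put $B_\beta=\{\alpha<\kappa\mid\beta\in A_\alpha\}$ and apply the normal filter property to $X=\{B_\beta\mid\beta<\kappa\}$, obtaining a normal filter $F$ measuring $X$. Let $A=\{\beta<\kappa\mid B_\beta\in F\}$ and, for $\beta<\kappa$, $C_\beta=B_\beta$ if $\beta\in A$ and $C_\beta=\kappa\setminus B_\beta$ otherwise, so that $C_\beta\in F$ because $F$ measures $B_\beta$. Since $F$ is normal, $C=\largetriangleup_{\beta<\kappa}C_\beta$ is stationary, and for $\alpha\in C$ and $\beta<\alpha$ one reads off $\beta\in A\iff\alpha\in B_\beta\iff\beta\in A_\alpha$, i.e.\ $A\cap\alpha=A_\alpha$; hence $\{\alpha<\kappa\mid A\cap\alpha=A_\alpha\}\supseteq C$ is stationary and $\kappa$ is ineffable.

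I do not expect any single step to be a real obstacle; the substance is entirely in setting up the $\kappa$-lists and the ``decided'' families $X_\beta^\ast$, $C_\beta$ so that a diagonal intersection performs the bookkeeping, and in noticing that the paper's (deliberately weak) notion of normal filter demands precisely the stationarity that ineffability supplies. The points requiring a little care are that the filter $F$ built in the first direction genuinely measures all of $X$ and has size $\kappa$ so that Lemma~\ref{characterization of normal filters} applies, and that the regularity used throughout is legitimate, both of which are handled by the opening reduction. As the surrounding text indicates, this argument is really a repackaging of the Di Prisco--Zwicker characterization of ineffability together with Lemma~\ref{characterization of normal filters}.
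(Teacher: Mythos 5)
Your proof is correct. Note that the paper itself gives no argument here: it simply cites Di Prisco--Zwicker together with Lemma~\ref{characterization of normal filters} and closes the proposition with a $\Box$. What you have written is, in effect, a self-contained proof of the relevant Di Prisco--Zwicker characterization, adapted to the paper's (non-upward-closed, size-based) notion of filter. Both directions check out: in the forward direction the key computation $S\setminus(\beta+1)\subseteq X_\beta^\ast$ is exactly right, and it also silently disposes of the one potential worry, namely that a repeated entry $X_\beta=X_{\beta'}$ with $\beta\in A$, $\beta'\notin A$ could force both a set and its complement into $F$ --- this cannot happen, since both would then contain a tail of the unbounded set $S$. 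In fact your construction gives slightly more than you claim: every element of $F$ contains a tail of $S$, so the diagonal intersection of an \emph{arbitrary} $\kappa$-sequence of elements of $F$ (not just an enumeration) contains $S$ intersected with a club, which verifies Definition~\ref{definition:filter}(d) directly; the appeal to Lemma~\ref{characterization of normal filters} is therefore a convenience rather than a necessity. The converse direction is the standard coding of a $\kappa$-list by the sets $B_\beta$ and is carried out correctly; the only hypothesis it uses is that the diagonal intersection of the chosen sequence $\langle C_\beta\mid\beta<\kappa\rangle$ of elements of $F$ is stationary, which is precisely what the paper's definition of normality supplies. Your opening reduction to the regular (indeed inaccessible) case is legitimate and needed for ``normal'' and ``stationary'' to make sense on both sides.
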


%\todo[inline]{define weakly Ramsey, weak $\kappa$-model, weakly $M$-amenable, $M$-normal, $M$-ultrafilter}

%\begin{lemma}
%Suppose that $\omega$-Ramsey, in fact the weaker version for $H_{\kappa^+}$ is sufficient. Then $\kappa$ has the normal filter property. 
%\end{lemma} 
%\begin{proof} 
%  Let $X\subseteq\mathcal P(\kappa)$ be of size at most $\kappa$. We may assume that $X$ has size $\kappa$. Let $Y\subseteq\kappa$ code an enumeration of $X$ of length $\kappa$. Since $\kappa$ is weakly Ramsey, by \cite[Proposition 2.8, (2)]{MR2830415},  there is a an elementary substructure $M\prec H_{\kappa^+}$ containing $Y$ as an element, with a weakly $M$-amenable, $M$-normal $M$-ultrafilter $F$.  Then $F\cap X\in M$. Let $Z$ be an enumeration of $F\cap X$ in $M$. By $M$-normality of $F$, $\largetriangleup Z\in F$, and so in particular it is stationary in $M$, Since $M\prec H_{\kappa^+}$, it is stationary in $V$. 
%\todo[inline]{Does this really work here? I think we only get that $\largetriangleup Z$ is stationary in $M$. I don't see how to finish the proof unless say $M\prec H(\theta)$ for some regular $\theta>\kappa$. I guess we could use the strengthening of weakly Ramsey here that Victoria did not define. It's version for $H(\kappa^+)$ would be enough...}
%By Lemma \ref{characterization of stationary filters}, $F\cap X$ is normal. 
%\end{proof} 

We now want to turn our attention to natural attempts at strengthening the above filter properties, which are the following \emph{filter extension properties}. They will however turn out to either be trivial or inconsistent, and this will then lead us to a more successful attempt at strengthening the filter property in Section \ref{filtergames}.

\begin{definition} 
A cardinal $\kappa$ has the \emph{filter extension property} if for every ${<}\kappa$-complete filter $F$ on $\kappa$ of size at most $\kappa$ and for every subset $X$ of $\mathcal P(\kappa)$ of size at most $\kappa$, there is a ${<}\kappa$-complete filter $\bar{F}$ with $F\subseteq\bar{F}$ that measures $X$. 

A cardinal $\kappa$ that satisfies the filter property has the \emph{$M$-normal filter extension property} if for every weak $\kappa$-model $M$, every $M$-normal filter $F$ on $\kappa$ and every weak $\kappa$-model $N\supseteq M$, there is an $N$-normal filter $\bar F$ with $F\subseteq\bar F$.

$\kappa$ has the \emph{normal filter extension property} if for every normal filter $F$ on $\kappa$ of size at most $\kappa$ and every $X\subseteq\mathcal P(\kappa)$ of size at most $ \kappa$, there is a normal filter $\bar F\supseteq F$ that measures $X$.
\end{definition} 

\begin{proposition} \label{filter extension property for weakly compacts} 
Every weakly compact cardinal $\kappa$ satisfies the filter extension property. 
\end{proposition}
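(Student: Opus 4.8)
The plan is to exploit the well-known embedding characterization of weak compactness: $\kappa$ is weakly compact if and only if $\kappa^{<\kappa}=\kappa$ and for every $\kappa$-model $M$ there is a $\kappa$-model $N$ and an elementary embedding $j\colon M\to N$ with critical point $\kappa$. Given a ${<}\kappa$-complete filter $F$ on $\kappa$ of size at most $\kappa$ together with a set $X\subseteq\mathcal P(\kappa)$ of size at most $\kappa$, I would first absorb both $F$ and $X$ (as elements, not merely as subsets) into a single $\kappa$-model $M$; this is possible since $\kappa^{<\kappa}=\kappa$ and one can close off under the relevant operations. Then I would apply the embedding characterization to obtain $j\colon M\to N$ with critical point $\kappa$.

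The key step is to read off the desired extended filter from the embedding. Define $\bar F=\{\,A\in\mathcal P(\kappa)\cap M \mid \kappa\in j(A)\,\}$, or, to also measure all of $X$, work with a slightly larger model containing $X$ and set $\bar F=\{\,A\subseteq\kappa \mid A\in M \text{ and }\kappa\in j(A)\,\}$ where $M$ now contains $F\cup\{F\}\cup X$. First I would check $F\subseteq\bar F$: if $A\in F$, then since $F$ has size at most $\kappa$ and lies in $M$, elementarity gives that $j(F)$ restricted below $\kappa$ agrees with $F$ — more precisely, $j\restr\mathcal P(\kappa)\cap M$ moves $F$ to a filter $j(F)$ with $F = \{j(A)\cap\kappa : A\in F\}$-type coherence — and ${<}\kappa$-completeness of $F$ together with $A$ having size $\kappa$ ensures $\kappa\in j(A)$; in fact the standard argument is that $\bigcap$ of any ${<}\kappa$-many elements of $F$ has size $\kappa$, hence is unbounded, so $j$ of it contains $\kappa$, giving $A\in\bar F$ for each $A\in F$. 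Next, $\bar F$ measures $X$: for $A\in X$, either $\kappa\in j(A)$ or $\kappa\in j(\kappa\setminus A)=j(\kappa)\setminus j(A)$, so $A\in\bar F$ or $\kappa\setminus A\in\bar F$. Finally, $\bar F$ is ${<}\kappa$-complete: given $\langle A_i\mid i<\gamma\rangle$ with $\gamma<\kappa$ and each $A_i\in\bar F$, the sequence lies in $M$ (using $M^{<\kappa}\subseteq M$), so $j(\langle A_i\mid i<\gamma\rangle)=\langle j(A_i)\mid i<\gamma\rangle$ by $\crit(j)=\kappa>\gamma$, and $\kappa\in j(A_i)$ for all $i<\gamma$ yields $\kappa\in\bigcap_{i<\gamma}j(A_i)=j(\bigcap_{i<\gamma}A_i)$, so $\bigcap_{i<\gamma}A_i\in\bar F$, which in particular has size $\kappa$ by elementarity (as $j(\kappa)$ thinks $j(\bigcap A_i)$ is unbounded of size $j(\kappa)$, and $\kappa$ belongs to it).

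The main obstacle is bookkeeping around the fact that our filters are only partial (subsets of $\mathcal P(\kappa)$, not genuine filters in the classical sense) and need not be closed under intersection, so one must be slightly careful that the defining property of a filter here — that all finite (and, for ${<}\kappa$-complete, ${<}\kappa$-sized) subintersections have size $\kappa$ — is preserved; this is exactly what the argument above checks via elementarity, but it requires that the relevant sequences and intersections be captured inside $M$, which is why one works with a $\kappa$-model (so that $M^{<\kappa}\subseteq M$) rather than merely a weak $\kappa$-model, and uses $\kappa^{<\kappa}=\kappa$, which holds for weakly compact $\kappa$. A secondary point is that $\bar F$ as defined is a subset of $\mathcal P(\kappa)\cap M$ of size at most $\kappa$, as required by the definition of the filter extension property. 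No genuinely hard step arises beyond this; the proposition is essentially a repackaging of the embedding characterization, and I would keep the write-up short, citing the characterization and verifying the three closure/measuring properties in turn.
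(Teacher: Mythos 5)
Your construction of $\bar F$ from an elementary embedding breaks down at the very step you flag as ``standard'': from $A$ having size $\kappa$ (equivalently, being unbounded in $\kappa$) it does \emph{not} follow that $\kappa\in j(A)$. Unboundedness of $A$ gives that $j(A)\cap\kappa=A$ is unbounded in $\kappa$, but $\kappa$ itself need not be a member of $j(A)$ unless $j(A)$ is closed at $\kappa$ (as happens for clubs). Concretely, let $S$ be the set of successor ordinals below $\kappa$ and take $F=\{S\}$, which is a ${<}\kappa$-complete filter in the sense of Definition \ref{definition:filter} since $|S|=\kappa$. For any elementary embedding $j\colon M\to N$ with critical point $\kappa$ and $S\in M$, the set $j(S)$ consists of successor ordinals of $N$, so $\kappa\notin j(S)$; in fact the derived filter $\{A\in\mathcal P(\kappa)\cap M\mid\kappa\in j(A)\}$ contains every club of $M$ and hence contains $\kappa\setminus S$. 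So $F\not\subseteq\bar F$ no matter how you choose $M$ and $j$, and the approach of deriving $\bar F$ from an arbitrary embedding with $F\in M$ cannot establish the extension property. (Your verifications that $\bar F$ measures $X$ and is ${<}\kappa$-complete are fine; the problem is solely that $\bar F$ has no reason to extend $F$.)

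To repair this you need a form of weak compactness that lets you \emph{steer} which sets the extending filter picks up, and the plain embedding characterization does not do that. The paper's proof instead uses the tree property directly: enumerate $F$ as $\langle A_i\mid i<\kappa\rangle$ and $X$ as $\langle B_i\mid i<\kappa\rangle$, and build the tree of those $t\in 2^\alpha$ for which $\bigcap_{i<\alpha}A_i\cap\bigcap_{i<\alpha}B_{i,t(i)}$ has size $\kappa$ (where $B_{i,0}=B_i$ and $B_{i,1}=\kappa\setminus B_i$); inaccessibility keeps the levels small and nonempty, and a cofinal branch generates the required ${<}\kappa$-complete $\bar F\supseteq F$ measuring $X$. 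Alternatively one can argue via $L_{\kappa,\kappa}$-compactness, adding a constant $c$ and the axioms $c\in A$ for $A\in F$ together with the requirements deciding each $B\in X$; the ${<}\kappa$-completeness of $F$ gives satisfiability of all small subtheories. Either route uses a combinatorial consequence of weak compactness beyond the bare existence of embeddings on models containing $F$.
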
 
\begin{proof} 
%The proof works as for the filter property.
Let $F$ be a ${<}\kappa$-complete filter on $\kappa$ of size at most $\kappa$ and let $X$ be a subset of $\mathcal P(\kappa)$ of size at most $\kappa$. We construct a subtree $T$ of ${}^{{<}\kappa}2$ as follows. Suppose that $\langle A_i\mid i<\kappa\rangle$ is an enumeration of $F$ and $\langle B_i\mid i<\kappa\rangle$ is an enumeration of $X$. 

We define $\mathrm{Lev}_\alpha(T)$ for $\alpha<\kappa$ as follows. Let $B_{i,j}=B_i$ for $j=0$ and $B_{i,j}=\kappa\setminus B_i$ for $j=1$, where $i<\kappa$. If $t\in 2^\alpha$, let $A_\alpha=\bigcap_{i<\alpha}A_i$, let $B_{\alpha,t}=\bigcap_{i<\alpha} B_{i,t(i)}$ and let $t\in \mathrm{Lev}_\alpha(T)$ if $|A_\alpha\cap B_{\alpha,t}|=\kappa$. 
Then $T$ is a subtree of $2^{<\kappa}$. 

Since $|A_\alpha|=\kappa$ and $\langle B_{\alpha,t}\mid t\in 2^\alpha\rangle$ is a partition of $\kappa$, $\mathrm{Lev}_\alpha(T)\neq\emptyset$. 
Since $\kappa$ has the tree property, there is a cofinal branch $b$ through $T$. Let $\bar{F}=\{A\subseteq \kappa\mid \exists \alpha<\kappa\ A_\alpha\cap B_{\alpha,b\upharpoonright \alpha}\subseteq A\}$. 
Then $\bar{F}$ is a ${<}\kappa$-complete filter that measures $X$ and extends $F$. 
%The ${<}\kappa$-completeness of $F$ also implies that $\bar F$ is ${<}\kappa$-complete, as desired.
\end{proof} 
%The above lemma can also be proven via weak compactness of the language $\lkk$ as in \cite[Proposition 4.1]{MR2731169}. 

\begin{proposition} \label{normal filter extension property false}
The normal filter extension property fails for every infinite cardinal. 
\end{proposition}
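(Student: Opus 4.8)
The plan is to refute the normal filter extension property outright, by exhibiting for every infinite cardinal $\kappa$ a single normal filter $F$ on $\kappa$ of size at most $\kappa$ together with a single family $X\subseteq\mathcal P(\kappa)$ of size at most $\kappa$ such that no normal filter $\bar F\supseteq F$ measures $X$. Since normality is only defined for regular $\kappa$ in Definition~\ref{definition:filter}, we may assume $\kappa$ is regular. The case $\kappa=\omega$ is handled separately: computing a diagonal intersection along a constant sequence shows that every element of a normal filter on $\omega$ must be cofinite, so $F=\{\omega\}$ together with $X=\{\{2n\mid n<\omega\}\}$ already works. From now on I assume $\kappa$ is regular and uncountable.

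I would set $S=\{\alpha<\kappa\mid\cof(\alpha)=\omega\}$, which is stationary, and fix for every $\alpha\in S$ a strictly increasing sequence $\langle c^\alpha_n\mid n<\omega\rangle$ that is cofinal in $\alpha$ (say, consisting of successor ordinals). For $n<\omega$ and $\gamma<\kappa$, let $T_{n,\gamma}=\{\alpha\in S\mid c^\alpha_n=\gamma\}$; for each fixed $n$ these sets partition $S$, and -- the decisive point -- $\alpha\in T_{n,\gamma}$ forces $\gamma=c^\alpha_n<\alpha$. As there are only $\kappa$ many sets $T_{n,\gamma}$, the family $X=\{S\}\cup\{T_{n,\gamma}\mid n<\omega,\,\gamma<\kappa\}$ has size at most $\kappa$. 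I take $F=\{S\}$, which is a normal filter (of size $1$), since $\largetriangleup_{\alpha<\kappa}S$ equals $S$ together with the point $0$ and is thus stationary. The reason for choosing exactly this $F$ is that any normal $\bar F\supseteq F$ must then contain $S$ itself, and this is precisely what prevents the argument below from being circumvented by committing $\kappa\setminus S$ to $\bar F$.

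Now suppose for a contradiction that $\bar F\supseteq F$ is a normal filter measuring $X$, so that $S\in\bar F$. The crux is to show that for every $n<\omega$ there is exactly one $\gamma_n<\kappa$ with $T_{n,\gamma_n}\in\bar F$. There is at most one, because the $T_{n,\gamma}$ are pairwise disjoint and a filter cannot contain two disjoint sets. There is at least one: otherwise, as $\bar F$ measures each $T_{n,\gamma}$, it contains every $\kappa\setminus T_{n,\gamma}$, and I can interleave these with copies of $S$ to form a length-$\kappa$ sequence $\vec W$ of elements of $\bar F$ in which $\kappa\setminus T_{n,\gamma}$ occurs before stage $\beta$ for every limit ordinal $\beta>\gamma$; then for any limit ordinal $\beta\in\largetriangleup\vec W$ the copies of $S$ force $\beta\in S$, hence $\beta\in T_{n,c^\beta_n}$, while $c^\beta_n<\beta$ means $\kappa\setminus T_{n,c^\beta_n}$ occurs before stage $\beta$, so that $\beta\notin\largetriangleup\vec W$; thus $\largetriangleup\vec W$ is nonstationary, contradicting the normality of $\bar F$. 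So each $\gamma_n$ is well-defined.

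Finally, $\bigcap_{n<\omega}T_{n,\gamma_n}=\{\alpha\in S\mid\forall n<\omega\ c^\alpha_n=\gamma_n\}$ contains at most one element, since any such $\alpha$ must equal $\sup_n\gamma_n$. Hence the length-$\kappa$ sequence that lists $T_{0,\gamma_0},T_{1,\gamma_1},\dots$ and thereafter repeats $S$ consists of elements of $\bar F$, yet its diagonal intersection is -- up to the nonstationary set of ordinals that are $\le\omega$ or not limits -- contained in $\bigcap_{n<\omega}T_{n,\gamma_n}$, hence nonstationary; this once more contradicts the normality of $\bar F$, and the proposition follows. The step I expect to be the main obstacle is showing that $\gamma_n$ exists for each $n$, that is, that $\bar F$ cannot refuse to commit to one block of the $n$-th partition; getting this to work is exactly what dictates the regressive indexing of the $T_{n,\gamma}$ and the interleaving with copies of $S$, whereas the other steps are routine manipulations of diagonal intersections and the club of limit ordinals.
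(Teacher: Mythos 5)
Your proof is correct, and it takes a genuinely different route from the paper's. The paper uses the same counterexample filter $F=\{S^\kappa_\omega\}$, but then argues model-theoretically: it first derives weak compactness of $\kappa$ from the extension property, takes a $\kappa$-model $M$ containing $S^\kappa_\omega$, and observes that any normal filter measuring $\mathcal P(\kappa)\cap M$ is $M$-normal with a well-founded ultrapower in which $\kappa$ (represented by the identity) has cofinality $\omega$, contradicting inaccessibility. Your argument is the purely combinatorial unwinding of that contradiction: you exhibit an explicit $\kappa$-sized family $X$ -- the pieces $T_{n,\gamma}$ of the partitions of $S^\kappa_\omega$ induced by a ladder system -- and show via diagonal intersections that normality forces the extended filter to decide each regressive function $\alpha\mapsto c^\alpha_n$ to be a constant $\gamma_n$ (a Fodor-type step, correctly implemented by your interleaved sequence $\vec W$, whose diagonal intersection would otherwise avoid all limit ordinals), whence $\bigcap_n T_{n,\gamma_n}$ is at most a singleton and the final diagonal intersection is nonstationary. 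What your approach buys is self-containedness: no appeal to weak compactness, $\kappa$-models, {\L}o\'s's theorem, or well-foundedness of ultrapowers, and an explicit witnessing pair $(F,X)$; it also handles $\kappa=\omega$ by an actual computation rather than the paper's ``clearly fails.'' What the paper's approach buys is brevity and a conceptual link to the ultrapower machinery used throughout the rest of the article. One cosmetic remark: your phrase ``$\kappa\setminus T_{n,\gamma}$ occurs before stage $\beta$ for every limit ordinal $\beta>\gamma$'' is realized by placing it at index $\gamma+1$, which is indeed below $\gamma+\omega$, the least limit ordinal above $\gamma$; it would be worth saying this explicitly.
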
 
\begin{proof} 
The property clearly fails for $\omega$. Suppose for a contradiction that the normal filter extension property holds for some uncountable cardinal $\kappa$. Since this implies that the filter property holds for $\kappa$, we know that $\kappa$ is weakly compact. Suppose that $S=S^\kappa_\omega$ and that $F_0=\{S\}$. $F_0$ is a normal filter. Let $M$ be a $\kappa$-model with $S\in M$. Assume that $F_1$ is a normal filter on $\kappa$ that measures $\mathcal P(\kappa)\cap M$. Normality of $F_1$ easily implies that $F_1$ is $M$-normal and that the ultrapower $N$ of $M$ by $F_1$ is well-founded. By {\L}os' theorem, since $\kappa$ is represented by the identity function in $N$, $\kappa$ has cofinality $\omega$ in $N$, contradicting that $\kappa$ is inaccessible.
\end{proof} 

The counterexample of a normal filter that cannot be extended to a larger set in the above is somewhat pathological, and perhaps the more interesting question is whether the $M$-normal filter extension property is consistent for some (weakly compact) cardinal $\kappa$. This has recently been answered by Victoria Gitman, and we would like to thank her for letting us include her proof here. 
Before we can provide Gitman's proof this of result, we need to introduce some standard terminology, which will also be useful for the later sections of our paper. 

\begin{definition} 
  Suppose that $M$ is a weak $\kappa$-model. 
\begin{enumerate-(a)} 
\item 
An embedding $j\colon M\to N$ is \emph{$\kappa$-powerset preserving} if it has critical point $\kappa$ and $M$ and $N$ have the same subsets of $\kappa$.

\item 
An $M$-normal filter $U$ on $\kappa$ is \emph{weakly amenable} if for every $A\in M$ of size at most $\kappa$ in $M$, the intersection $U\cap A$ is an element of $M$. 
\item 
An $M$-normal filter $U$ on $\kappa$ is \emph{good} if it is weakly amenable and the ultrapower of $M$ by $U$ is well-founded. 
\end{enumerate-(a)} 
\end{definition}

We will often make use of the following lemma, that is provided in \cite[Section 19]{MR2731169} for transitive weak $\kappa$-models, however the same proofs go through for possibly non-transitive weak $\kappa$-models.

\begin{lemma}\label{wapp_equivalence}
  Suppose that $M$ is a weak $\kappa$-model.
  \begin{enumerate-(1)}
    \item If $j\colon M\to N$ is the well-founded ultrapower map that is induced by a weakly amenable $M$-normal filter on $\kappa$, then $j$ is $\kappa$-powerset preserving.
    \item If $j\colon M\to N$ is a $\kappa$-powerset preserving embedding, then the $M$-normal filter $U=\{A\in\mathcal P(\kappa)^M\mid\kappa\in j(A)\}$ is weakly amenable and induces a well-founded ultrapower of $M$.
  \end{enumerate-(1)}
\end{lemma}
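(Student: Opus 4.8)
The plan is to run the standard ultrapower bookkeeping in both directions; the only non-routine ingredient is a ``coherent sequence'' argument transferring weak amenability into $\kappa$-powerset preservation and back. For (1), let $U$ be a weakly amenable $M$-normal filter and $j\colon M\to N$ its well-founded ultrapower map. First I would record the standard consequences of $M$-normality: $\kappa\in U$ and $\kappa\setminus\gamma\in U$ for every $\gamma<\kappa$ (complements of these have size $<\kappa$, hence cannot lie in $U$, and $U$ measures $\mathcal P(\kappa)\cap M$); $U$ is ${<}\kappa$-complete for sequences lying in $M$ (if $\langle X_i\mid i<\gamma\rangle\in M$ has all $X_i\in U$ but $C=\bigcap_{i<\gamma}X_i\notin U$, then extending this sequence to length $\kappa$ by appending $\kappa\setminus C$ and forming the diagonal intersection, which lies in $U$ by $M$-normality, produces a subset of $\gamma+1$, contradicting that elements of $U$ have size $\kappa$); and that $M$-normality amounts to every regressive $f\in M$ on a set in $U$ being constant on a set in $U$. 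These give that $j$ has critical point $\kappa$. Then $\mathcal P(\kappa)^M\subseteq\mathcal P(\kappa)^N$ is immediate, since $A=j(A)\cap\kappa\in N$ for $A\in\mathcal P(\kappa)^M$. For the converse, take $B\in\mathcal P(\kappa)^N$; by {\L}os' theorem $B=[f]_U$ for some $f\in M$, which we may take with $f(\alpha)\subseteq\kappa$ for all $\alpha<\kappa$, and then $\xi\in B$ if and only if $X_\xi:=\{\alpha<\kappa\mid\xi\in f(\alpha)\}\in U$ for each $\xi<\kappa$. The set $A=\{X_\xi\mid\xi<\kappa\}$ lies in $M$ and has size ${\le}\kappa$ in $M$, so weak amenability gives $U\cap A\in M$, whence $B=\{\xi<\kappa\mid X_\xi\in U\cap A\}\in M$ by separation in $M$.

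For (2), let $j\colon M\to N$ be $\kappa$-powerset preserving and $U=\{A\in\mathcal P(\kappa)^M\mid\kappa\in j(A)\}$. I would first check that $U$ is an $M$-normal filter. Having critical point $\kappa$ forces $\kappa$ to be regular in $M$ (otherwise the $j$-image of an $M$-cofinal map from some $\lambda<\kappa$ into $\kappa$ would be cofinal in $j(\kappa)>\kappa$ while bounded by $\kappa$); hence any $C\in M$ with $\kappa\in j(C)$ is unbounded in $\kappa$ --- were it bounded by $\gamma<\kappa$ we would have $j(C)\subseteq j(\gamma)=\gamma$ --- and therefore of size $\kappa$. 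Applying this to finite intersections (and more generally to ${<}\kappa$-length intersections taken in $M$) of elements of $U$, and to diagonal intersections $\largetriangleup\vec{X}$ for $\vec{X}\in M$ a sequence of elements of $U$ (where $\kappa\in\bigcap_{\beta<\kappa}j(X_\beta)$ forces $\kappa\in j(\largetriangleup\vec{X})$), together with $j(\kappa\setminus A)=j(\kappa)\setminus j(A)$ and $\kappa<j(\kappa)$, shows that $U$ is a ${<}\kappa$-complete $M$-normal filter measuring $\mathcal P(\kappa)\cap M$. For weak amenability, given $A\in M$ of size ${\le}\kappa$ in $M$ we may assume $A\subseteq\mathcal P(\kappa)^M$ and fix an enumeration $\vec{a}=\langle a_\xi\mid\xi<\kappa\rangle\in M$ of $A$ (padded to length $\kappa$ if necessary); then $W=\{\xi<\kappa\mid\kappa\in j(\vec{a})(\xi)\}$ is a subset of $\kappa$ definable in $N$ from the parameters $j(\vec{a})$ and $\kappa$, so $W\in\mathcal P(\kappa)^N=\mathcal P(\kappa)^M$ by $\kappa$-powerset preservation, and therefore $U\cap A=\{a_\xi\mid\xi\in W\}\in M$. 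Finally, well-foundedness of the ultrapower of $M$ by $U$ follows from the standard factor embedding of it into $N$ given by $[f]_U\mapsto j(f)(\kappa)$, which by {\L}os' theorem and the definition of $U$ is well-defined and preserves $\in$ in both directions; since $N$ is well-founded, so is the ultrapower.

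The steps I expect to carry the real content are the two ``coherent sequence'' computations --- obtaining $B\in M$ from $U\cap A\in M$ in (1), and $W\in M$ from $W\in\mathcal P(\kappa)^N$ in (2) --- which are exactly where weak amenability and $\kappa$-powerset preservation, respectively, get used; the remainder is the routine check that an $M$-normal filter behaves on $\mathcal P(\kappa)\cap M$ like a ${<}\kappa$-complete ultrafilter and that {\L}os' theorem applies to the model $M$. One further point needing a little care, since weak $\kappa$-models are not assumed transitive here, is that all the embedding computations ($A=j(A)\cap\kappa$, the definability of $W$ in $N$, the factor map) should be carried out using only that $\kappa+1$ is contained in each model involved and that the relevant ultrapowers are well-founded, exactly as in \cite[Section 19]{MR2731169}.
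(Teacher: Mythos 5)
Your proof is correct and is exactly the standard argument that the paper itself only cites (it refers to \cite[Section 19]{MR2731169} and notes the proofs go through for non-transitive weak $\kappa$-models): weak amenability yields $[f]_U\cap\kappa\in M$ via the sequence $\langle X_\xi\mid\xi<\kappa\rangle$, powerset preservation yields weak amenability via the set $W$ definable in $N$, and well-foundedness comes from the factor map $[f]_U\mapsto j(f)(\kappa)$. Your closing remark about carrying out the computations using only $\kappa+1\subseteq M\subseteq N$ is precisely the adaptation to non-transitive models that the paper alludes to.
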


\begin{proposition}[Gitman]
  The $M$-normal filter extension property fails at every (weakly compact) cardinal.
\end{proposition}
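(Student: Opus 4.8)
The plan is to derive a contradiction from the $M$-normal filter extension property, with Observation~\ref{nonstationarydoesntwork} providing the obstruction. Since $\kappa$ then satisfies the filter property, it is weakly compact, hence regular and uncountable and has the tree property. The crucial object to produce is a weak $\kappa$-model $M$, an $M$-normal filter $F$ on $\kappa$, and a sequence $\vec X=\langle X_\alpha\mid\alpha<\kappa\rangle$ of elements of $F$ with $\largetriangleup\vec X$ non-stationary; note that $\vec X$ cannot lie in $M$, for otherwise $M$-normality would put $\largetriangleup\vec X$, a set of size $<\kappa$, into $F$.

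Granting such $M$, $F$ and $\vec X$, I fix a club $C$ disjoint from $\largetriangleup\vec X$ and apply Observation~\ref{nonstationarydoesntwork}: it yields a set $\D\subseteq\mathcal P(\kappa)$ of size $\kappa$ such that every filter extending $F$ and measuring $\D$ contains a sequence $\vec Y$ with $\largetriangleup\vec Y=\emptyset$, and an inspection of its proof shows that, in either of the two cases occurring there, $\vec Y$ is definable over any $\ZFC^-$-model containing them from $\vec X$ and $C$. I then pick a weak $\kappa$-model $N\supseteq M$ with $\D\cup\{\vec X,C\}\subseteq N$. By the $M$-normal filter extension property there is an $N$-normal filter $\bar F\supseteq F$; as $\D\subseteq\mathcal P(\kappa)\cap N$, the filter $\bar F$ measures $\D$, hence contains the associated $\vec Y$, and $\vec Y\in N$. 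But then $N$-normality forces $\emptyset=\largetriangleup\vec Y\in\bar F$, contradicting that every element of a filter on $\kappa$ has size $\kappa$.

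It remains to build $M$, $F$ and $\vec X$. Choose a weak $\kappa$-model $M\prec H(\theta)$ of size $\kappa$, for a suitable regular $\theta$, with $\kappa+1\cup\{S^\kappa_\omega\}\subseteq M$ and with $M$ not closed under $\omega$-sequences. Pick $T_1\subseteq S^\kappa_\omega$ with $T_1$ and $S^\kappa_\omega\setminus T_1$ both stationary and $T_1\notin M$ (possible since $S^\kappa_\omega$ has more than $\kappa$ stationary subsets while $M$ has only $\kappa$ subsets of $\kappa$), and partition the stationary set $S^\kappa_\omega\setminus T_1$ into stationary pieces $\langle T_j\mid j\ge 2\rangle$. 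Setting $A_k=\bigcup_{j>k}T_j$ for $k\ge 1$, the sequence $\langle A_k\mid k\ge 1\rangle$ is $\subseteq$-decreasing, each $A_k$ is stationary, $\bigcap_{k\ge 1}A_k=\emptyset$, and $A_1=S^\kappa_\omega\setminus T_1\notin M$. Let $\vec X$ be $\langle A_{k+1}\mid k<\omega\rangle$ padded with $\kappa$'s to length $\kappa$; then $\largetriangleup\vec X$ is finite, hence non-stationary, and $\vec X\notin M$ since $A_1\notin M$. Finally, construct an $M$-normal filter $F$ on $\kappa$ with $\{A_k\mid k\ge 1\}\subseteq F$ by a tree-of-consistent-partial-measures argument of the sort used in the proof of Proposition~\ref{filter extension property for weakly compacts}, folding into the conditions --- as in the standard proof that weakly compact cardinals have the $M$-normal filter property --- the requirement that diagonal intersections of $M$-sequences of elements of $F$ belong to $F$.

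I expect this last construction to be the main obstacle: one has to verify that the partial measures can always be extended while preserving the finite-intersection property, honouring the diagonal-intersection requirements coming from $M$-normality, and keeping every $A_k$ with $k\ge 1$ in the filter. The reason it should go through is that the single obstruction among the $A_k$ --- namely $\bigcap_{k\ge 1}A_k=\emptyset$ --- is witnessed only by the $\omega$-sequence $\langle A_{k+1}\mid k<\omega\rangle$, which is not in $M$, so no $M$-sequence of elements of $F$ is forced to have a diagonal intersection of size $<\kappa$ and $\emptyset$ is never forced into $F$; this is also where $M$ being non-$\omega$-closed is essential, since otherwise $A_1\in F$ together with $M$-normality would force a well-founded ultrapower of $M$ in which $\kappa$ has cofinality $\omega$. (If one only wants the result for weakly compact $\kappa$ that are not ineffable, the construction can be avoided: by the theorem of Di Prisco and Zwicker together with Lemma~\ref{characterization of normal filters}, there is a weak $\kappa$-model $M_0$ carrying a non-normal $M_0$-normal filter $F_0$ of size $\kappa$; passing to a weak $\kappa$-model $M_1\ni F_0$, using the extension property once to obtain an $M_1$-normal $\bar F_0\supseteq F_0$, and letting $\vec X\in M_1$ enumerate $F_0$, one gets $\largetriangleup\vec X\in\bar F_0$ non-stationary, so the first two paragraphs apply with $(M_1,\bar F_0,\vec X)$ in place of $(M,F,\vec X)$.)
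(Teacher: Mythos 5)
The reduction in your first two paragraphs is sound, and it is essentially the observation that also opens the paper's proof: an $M$-normal filter that extends to an $N$-normal filter for every weak $\kappa$-model $N\supseteq M$ must be countably complete. The fatal problem is in your third paragraph: the filter $F$ you need there provably does not exist. Since $S^\kappa_\omega\in M$ and each $A_k\subseteq S^\kappa_\omega$, any $M$-normal filter containing the $A_k$ must contain $S^\kappa_\omega$ itself (otherwise $\kappa\setminus S^\kappa_\omega\in F$ would meet $A_k$ in the empty set). But no $M$-normal filter on a weak $\kappa$-model $M$ with $S^\kappa_\omega\in M$ contains $S^\kappa_\omega$. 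Indeed, fix a ladder system $\langle f_\alpha\mid\alpha\in S^\kappa_\omega\rangle\in M$ and let $g_m(\alpha)=f_\alpha(m)$. Each $g_m\in M$ is regressive on $S^\kappa_\omega$, and $M$-normality yields the pressing-down lemma for functions in $M$: if no $g_m^{-1}(\gamma)$ lies in $F$, then $\largetriangleup_{\gamma<\kappa}(\kappa\setminus g_m^{-1}(\gamma))\in F$ is disjoint from $S^\kappa_\omega$, contradicting the finite intersection property. So each $g_m$ is constant, say with value $\gamma_m$, on a set in $F$. Put $\beta=\sup_m\gamma_m<\kappa$ and let $h\in M$ send $\alpha\in S^\kappa_\omega\setminus(\beta+1)$ to the least $m$ with $f_\alpha(m)>\beta$; the same diagonal-intersection argument applied to $\langle\kappa\setminus h^{-1}(m)\mid m<\omega\rangle$ (padded with $\kappa$'s) makes $h$ constant with value $m_0$ on a set in $F$, and that set is disjoint from $g_{m_0}^{-1}(\gamma_{m_0})\in F$ --- a contradiction. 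Thus your heuristic that the only obstruction is the sequence $\langle A_k\mid k<\omega\rangle\notin M$ is mistaken: the damaging $M$-sequences come from the ladder system, which does lie in $M$, and non-$\omega$-closure of $M$ does not help. (This is the same phenomenon exploited in the proof of Proposition \ref{normal filter extension property false}, only without needing a well-founded ultrapower.)

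The approach is salvageable if you abandon stationarity: fix an $M$-normal filter $F_0\subseteq M$ and, by a recursion of length $\kappa$, partition $\kappa$ into sets $T_j$ ($j<\omega$) each meeting every element of $F_0$ in a set of size $\kappa$; then $F=F_0\cup\{\bigcup_{j>k}T_j\mid k\ge 1\}$ is $M$-normal (every $M$-sequence of elements of $F$ consists of elements of $F_0$, since $F$ cannot measure a set in $M$ differently from $F_0$) and carries your sequence $\vec X$ with bounded diagonal intersection. That would give a direct argument, quite different from the paper's: Gitman never constructs such a filter, but instead takes the \emph{least} weakly compact $\kappa$ with the extension property, iterates the extension property $\omega$ times along an elementary chain in $H(\kappa^+)$ to obtain a weakly amenable, countably complete $M$-normal filter, and then reflects the minimality of $\kappa$ through the resulting $\kappa$-powerset preserving ultrapower. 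Your parenthetical for weakly compact non-ineffable $\kappa$ is essentially correct, but as you note it does not cover the general case, so as written the proof stands or falls with the third paragraph --- and there it falls.
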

\begin{proof}
  Assume that $\kappa$ is the least weakly compact cardinal that satisfies the $M$-normal filter extension property. Observe first that if $M$ is any weak $\kappa$-model and $U$ is an $M$-normal filter on $\kappa$, then $U$ has to be countably complete, for if not $U$ cannot be extended to an $N$-normal filter for any $N\supseteq M$ containing a witness for $U$ not being countably complete. Let $M_0\prec H(\kappa^+)$ be a weak $\kappa$-model containing $V_\kappa$, and let $U_0$ be an $M_0$-normal filter. Given $M_i$ and $U_i$, let $M_{i+1}\prec H(\kappa^+)$ be a weak $\kappa$-model containing $M_i$ and $U_i$ as elements, and let $U_{i+1}$ be an $M_i$-normal filter extending $U_i$, making use of the $M$-normal filter extension property. Continue this construction for $\omega$ steps, let $M$ be the union of the $M_i$ and let $U$ be the union of the $U_i$. By construction, $U$ is weakly amenable for $M$, and by our above observation, we may assume that $U$ is countably complete. Let $j\colon M\to N$ be the ultrapower embedding induced by $U$. Now $M\prec H(\kappa^+)$ satisfies that $\kappa$ is weakly compact and has the $M$-normal filter extension property. But since $j$ is $\kappa$-powerset preserving, this is also true in $N$, and hence by elementarity, $\kappa$ cannot be least with this property.
\end{proof}

Having observed that both the $M$-normal and the normal filter extension property are inconsistent, the fact that the filter extension property is no stronger than the filter property might lead one to try and further strengthen the filter extension property in order to obtain something interesting. The filter extension property at $\kappa$ is equivalent to the second player winning the following finite game. Player I plays a ${<}\kappa$-complete filter $F_0$ on $\kappa$ and a collection $X$ of subsets of $\kappa$ of size $\kappa$. Player II wins if she can play a ${<}\kappa$-complete filter on $\kappa$ that extends $F_0$ and measures $X$. It is natural to investigate what happens if this game is continued into the transfinite.

Consider the following infinite two player game $G(\kappa)$ of perfect information. Two players, I and II, take turns to play a $\subseteq$-increasing sequence $\langle F_i\mid i<\omega\rangle$ of ${<}\kappa$-complete filters on $\kappa$ of size $\kappa$. Player II wins in case the filter $\bigcup_{i<\omega}F_i$ is ${<}\kappa$-complete.

One could define a variant of the filter (extension) property at $\kappa$ by requiring that Player I does not have a winning strategy in the game $G(\kappa)$. Note that however, as Joel Hamkins pointed out to us, this property is again inconsistent, that is Player I provably has a winning strategy in the game $G(\kappa)$. \footnote{Unlike in the finite game described above, Player I does not play subsets of $\mathcal P(\kappa)$ corresponding to the set $X$. Extending the game $G(\kappa)$ in this way would however make it even easier for Player I to win.} This result is essentially due to Jozef Schreier.

\begin{proposition}[Schreier]
  Let $\kappa$ be an uncountable cardinal. Then Play\-er I has a winning strategy in the game $G(\kappa)$.
\end{proposition}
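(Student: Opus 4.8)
The plan is to construct an explicit winning strategy for Player I in $G(\kappa)$ by having her diagonalize against the possibility that $\bigcup_{i<\omega}F_i$ remains ${<}\kappa$-complete. The key idea, going back to Schreier, is that an increasing $\omega$-sequence of ${<}\kappa$-complete filters can be forced to build up a countable family of sets whose intersection is small (indeed empty), so that the union filter fails even $\sigma$-completeness, let alone ${<}\kappa$-completeness (here we use $\kappa>\omega$). First I would fix at the outset a partition of $\kappa$ into $\omega$ many pieces $\langle P_n\mid n<\omega\rangle$, each of size $\kappa$ (possible since $\kappa$ is uncountable, hence in particular $\kappa=\kappa\cdot\omega$); equivalently, fix sets $Q_n=\bigcup_{m\ge n}P_m$ so that $\langle Q_n\mid n<\omega\rangle$ is a $\subseteq$-decreasing sequence of subsets of $\kappa$, each of size $\kappa$, with $\bigcap_{n<\omega}Q_n=\emptyset$.

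Player I's strategy is then the following. On her first move she plays $F_0$ to be (the ${<}\kappa$-complete filter generated by) the single set $Q_0=\kappa$, or more usefully $Q_1$; in general, at her $n$-th move, having seen Player II's filters so far, she plays a ${<}\kappa$-complete filter $F_{2n}$ extending everything played so far and containing the set $Q_{n+1}$. The only thing to check is that this move is legal, i.e.\ that the filter generated by the previous filter together with $Q_{n+1}$ is still a ${<}\kappa$-complete filter of size $\kappa$. Size $\kappa$ is clear. For ${<}\kappa$-completeness, note that Player II's last filter $F_{2n-1}$ is ${<}\kappa$-complete and, by induction, contains $Q_n$; since $Q_{n+1}\subseteq Q_n$ and $|Q_{n+1}|=\kappa$, and since for any $\gamma<\kappa$ and sets $X_i\in F_{2n-1}$ ($i<\gamma$) we have $|Q_{n+1}\cap\bigcap_{i<\gamma}X_i|=|Q_{n+1}\cap Q_n\cap\bigcap_{i<\gamma}X_i|$, which is $\kappa$ because $Q_n\in F_{2n-1}$ was already ${<}\kappa$-complete-compatible and $Q_{n+1}$ differs from $Q_n$ only on a set disjoint from — wait, this needs slightly more care: one argues directly that adding a single set of size $\kappa$ that has $\kappa$-sized intersection with every element of a ${<}\kappa$-complete filter yields again a ${<}\kappa$-complete filter, which holds here because $Q_n\in F_{2n-1}$ and $Q_{n+1}\cap Q_n=Q_{n+1}$ has size $\kappa$, and more generally $Q_{n+1}\cap\bigcap_{i<\gamma}X_i\supseteq$ nothing automatic — so instead I would have Player I at stage $n$ simply play the ${<}\kappa$-complete filter generated by $F_{2n-1}\cup\{Q_{n+1}\}$ and verify completeness using that $Q_{n+1}$ is co-$({<}\kappa$-small$)$ relative to $Q_n$ is false; rather, the clean route is: since each $P_m$ has size $\kappa$, the set $Q_{n+1}=\kappa\setminus\bigcup_{m\le n}P_m$ is the complement of a set which is a union of $n+1$ pieces — this is genuinely a new set, and legality follows from the standard fact that if $G$ is a ${<}\kappa$-complete filter and $A\subseteq\kappa$ has size $\kappa$ with $A\cap B$ of size $\kappa$ for all $B\in G$, then $G\cup\{A\}$ generates a ${<}\kappa$-complete filter; and here $A\cap B\supseteq Q_{n+1}\cap B = B\setminus\bigcup_{m\le n}P_m$, which has size $\kappa$ since $B$ has size $\kappa$ and we can pass to a cofinal subset avoiding finitely many $P_m$'s once we know $B\in G$ already meets each $Q_j$ in a $\kappa$-sized set — I would present this as a short lemma and move on.

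At the end of the run we have $Q_{n+1}\in F_{2n}\subseteq\bigcup_{i<\omega}F_i$ for every $n<\omega$, so $\bigcup_{i<\omega}F_i$ contains the countably many sets $\langle Q_{n+1}\mid n<\omega\rangle$ whose intersection is $\bigcap_{n<\omega}Q_{n+1}=\emptyset$, which has size $0<\kappa$. Hence $\bigcup_{i<\omega}F_i$ is not ${<}\kappa$-complete (this is where $\kappa>\omega$ is used, so that $\omega<\kappa$ and a countable intersection is a legitimate instance of the ${<}\kappa$-completeness requirement), and Player I has won. I expect the only real obstacle to be the bookkeeping in the legality verification — making sure each of Player I's moves genuinely produces a ${<}\kappa$-complete filter of size $\kappa$ extending all prior filters — which is why I would isolate the "adding one large set" step as a self-contained sublemma; everything else is immediate once the decreasing sequence $\langle Q_n\mid n<\omega\rangle$ with empty intersection is in place.
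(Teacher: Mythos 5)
Your strategy is not a legal strategy, and the gap is exactly at the point where you yourself hesitate. The plan is for Player I to commit \emph{in advance} to putting $Q_{n+1}=\bigcup_{m\ge n+1}P_m$ into her $n$-th filter, but Player II controls the intervening moves and can make this impossible. Concretely, after you play $F_0$ containing $Q_1$, Player II may respond with the ${<}\kappa$-complete filter generated by $F_0$ together with the tails of $P_1$ (note $P_1\subseteq Q_1$ and $|P_1|=\kappa$, so this is a legal move for $\kappa$ regular; a similar move works in general). Since $Q_2\cap P_1=\emptyset$, no filter extending Player II's move can contain $Q_2$ --- your prescribed second move is not a filter at all, let alone a ${<}\kappa$-complete one. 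The patch you propose does not repair this: the claim that $B\setminus\bigcup_{m\le n}P_m$ has size $\kappa$ whenever $|B|=\kappa$ is false, because $\bigcup_{m\le n}P_m$ itself has size $\kappa$ and may contain $B$ up to a small set; and the ``standard fact'' you want to isolate as a sublemma (that adjoining a set $A$ with $|A\cap B|=\kappa$ for every $B\in G$ preserves ${<}\kappa$-completeness) is also false as stated, since ${<}\kappa$-completeness is a condition on ${<}\kappa$-sized subfamilies, not on pairs.

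The underlying issue is that any \emph{oblivious} strategy of this kind --- fixing a countable decreasing sequence with empty intersection beforehand and trying to feed it into the filter --- is doomed, because Player II gets to steer the filter away from the next prescribed set. Schreier's argument, which the paper follows, is essentially adaptive: after translating the filter game into the game where both players play a decreasing sequence of $\kappa$-sized subsets of $\kappa$ (Player II winning iff the intersection has size $\kappa$), Player I's $n$-th move is computed \emph{from Player II's previous set}, by enumerating it in increasing order and deleting the least element of each $\omega$-block; the verification that the final intersection is empty then uses well-foundedness of the ordinals (the position of any surviving ordinal in the successive enumerations would strictly decrease), not a pre-fixed partition of $\kappa$. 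If you want to complete your write-up, you need both that translation step (which is itself nontrivial, since a ``filter'' here is merely a $\kappa$-sized family with the $\kappa$-sized intersection property) and a reactive diagonalization of Schreier's type in place of the fixed sequence $\langle Q_n\mid n<\omega\rangle$.
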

\begin{proof} 
We first claim that instead of the game $G(\kappa)$, we can equivalently consider the game $G$ of length $\omega$, in which both players take turns to play a decreasing sequence of subsets of $\kappa$ of size $\kappa$, with the winning condition for Player II being that the intersection of those subsets has size $\kappa$. %\footnote{We would like to thank Joel David Hamkins for pointing us to a discussion on \href{http://mathoverflow.net/questions/258597/choosing-subsets-of-mathbb-r-of-cardinality-frak-c-who-wins?noredirect=1&lq=1}{\emph{mathoverflow.net}} with the title \emph{Choosing subsets of $\mathbb{R}$ of cardinality $\mathfrak{c}$, who wins?} from January 2, 2017, where this game is discussed. } 
To see this, we translate a ${<}\kappa$-complete filter $F$ of size $\kappa$ to a subset $X$ of $\kappa$ as follows. 
Assuming that $\langle X_\alpha\mid \alpha<\kappa\rangle$ is an enumeration of $F$, we define a strictly increasing sequence $\langle x_\alpha\mid \alpha<\kappa\rangle$ of ordinals by choosing the least $x_\alpha\in\bigcap_{\beta<\alpha}X_\beta$ above the previous $x_\beta$ with $\beta<\alpha$, for each $0<\alpha<\kappa$, and then we let $X=\{x_\alpha\mid\alpha<\kappa\}$. On the other hand, given an unbounded set $X\subseteq\kappa$, we may define a ${<}\kappa$-complete filter $F$ by setting $F=\{X\setminus\alpha\mid\alpha<\kappa\}$. It is straightforward to verify that $X$ and $F$ can be used interchangeably, and this in particular allows us to translate strategies between $G(\kappa)$ and $G$. 
%where both player play subsets of $\kappa$.  
%Consider the game where both players play filters. We can equivalently consider the game where both players play subsets of $\kappa$ by diagonalizing. The wininng condition is that the intersection is nonempty. 
Schreier (\cite{schreier}) proved that Player I has a winning strategy $\sigma$ for $G$. This strategy is defined as follows. In each successor step, Player I enumerates the set previously played by Player II, and removes the least element in each $\omega$-block of the enumeration. An easy argument using the well-foundedness of the $\in$-relation on the ordinals shows $\sigma$ to be winning for Player I, with the intersection of the subsets of $\kappa$ played during a run of the game in which Player I plays according to $\sigma$ ending up as the empty set: If some ordinal $\alpha$ would lie in their intersection, then its position in the increasing enumeration of the individual subsets of $\kappa$ played during that run would strictly decrease after each move of Player I, giving rise to a strictly decreasing $\omega$-sequence of ordinals, which is a contradiction.
\end{proof}

Many further \emph{infinite filter games} can be defined. For example, if in the game $G(\kappa)$ above, we require all filters to be normal, we obtain a game for which the non-existence of a winning strategy for Player I implies the nonstationary ideal to be precipitous, for the modified game corresponds to the variant of the game $G$ where both players have to play stationary subsets of $\kappa$, with the winning condition for Player II being that the intersection of the stationary subsets is stationary, using that normal filters correspond to stationary sets via their diagonal intersection. It is well-known (see e.g.\ \cite[Lemma 22.21]{MR1940513}) that the precipitousness of the non-stationary ideal can be characterized by the non-existence of a winning strategy for Player I in the same game, however with the winning condition for Player II being a nonempty (and not necessarily stationary) intersection. 

%In the following section, we want to consider another family of games that strengthen the filter property, which will finally lead to our hierarchy of \emph{Ramsey-like} cardinals.

%What if we consider weak filter extension properties - that is, Player I plays a set $X_0$, and Player II has to measure it with $F_0$ (perhaps normal, or $X_0=M_0$-normal), then Player I plays $X_1$, and Player II has to measure it with $F_1\supseteq F_0$. Player II wins if $F_1$ is ${<}\kappa$-complete / $X_1=M_1$-normal / normal. This is the two-step version of the game that we want to take into the infinite below. For simply ${<}\kappa$-complete filters, this is obviously only weak compactness. What about $M$-normal / normal filters?

%%%%%%%%%%%%%%%%%%
%%%%%%%%%%%%%%%%%%
\section{Filter games}\label{filtergames}

In this section, we want to investigate another way of strengthening the filter property at $\kappa$, by viewing it as being equivalent to the non-existence of a winning strategy for Player I in the following simple game of length $2$. Player I starts by playing a subset $X$ of $\mathcal P(\kappa)$ of size at most $\kappa$, and in order to win, Player II has to play a ${<}\kappa$-complete filter that measures $X$. It is again tempting to let this game (and variations of it) continue to greater (and in particular infinite) lengths, that is to have Player I (the \emph{challenger}) play increasingly larger subcollections of $\mathcal P(\kappa)$ of size at most $\kappa$, and to ask for Player II (the \emph{judge}) to measure them by increasingly larger ${<}\kappa$-complete filters in order to win. There are many variations in formalizing the details of such a game, and we will pick one particular such formalization in the following, the choice of which will be justified by its usefulness in the remainder of this paper.

\begin{definition}\label{filtergamesdefinition}
Given an ordinal $\gamma\le\kappa^+$ and regular uncountable cardinals $\kappa=\kappa^{<\kappa}<\theta$, consider the following two-player game of perfect information $G_\gamma^\theta(\kappa)$. Two players, the \emph{challenger} and the \emph{judge}, take turns to play $\subseteq$-increasing sequences $\langle M_\alpha\mid\alpha<\gamma\rangle$ of $\kappa$-models, and $\langle F_\alpha\mid\alpha<\gamma\rangle$ of filters on $\kappa$, such that the following hold for every $\alpha<\gamma$.
\begin{enumerate-(a)} 
  \item At any stage $\alpha<\gamma$, the challenger plays $M_\alpha$, and then the judge plays $F_\alpha$.
  \item $M_\alpha\prec H(\theta)$,
  \item $\langle M_{\bar\alpha}\mid\bar\alpha<\alpha\rangle, \langle F_{\bar\alpha}\cap M_{\bar\alpha}\mid\bar\alpha<\alpha\rangle\in M_\alpha$,
%  \item $M_\alpha\setminus\bigcup_{\bar\alpha<\alpha}M_{\bar\alpha}$ has size at most $\kappa$,
  \item $F_{\alpha}$ is a filter on $\kappa$ that measures $\mathcal P(\kappa)\cap M_\alpha$ and
  \item $F_\alpha\supseteq\bigcup_{\beta<\alpha}F_\beta$.
\end{enumerate-(a)}
Let $M_\gamma:=\bigcup_{\alpha<\gamma}M_\alpha$, and let $F_\gamma:=\bigcup_{\alpha<\gamma}F_\alpha$.
 If $F_\gamma$ is an $M_\gamma$-normal filter%that gives rise to a well-founded ultrapower of $M_\gamma$
, then the judge wins. Otherwise, the challenger wins. \footnote{The following possible alternative definition of the games $G_\gamma^\theta(\kappa)$ was remarked by Joel Hamkins, and provides a very useful perspective. In each step $\alpha<\gamma$, in order to have a chance of winning, the judge has to play not only an $M_\alpha$-normal filter $F_\alpha$, but in fact has to play some $F_\alpha$ which is normal, as follows by Observation \ref{nonstationarydoesntwork}. Thus by Lemma \ref{characterization of normal filters}, one might assume that rather than playing filters, the judge is just playing stationary sets which correspond to diagonal intersections of enumerations of the relevant filters.}
\end{definition}

We also define the following variation of the above games. For $\gamma$, $\kappa$ and $\theta$ as above, let $\overline{G_\gamma^\theta}(\kappa)$ denote the variant of $G_\gamma^\theta(\kappa)$ where we additionally require the judge to play such that each $F_\alpha\subseteq M_\alpha$, that is she is not allowed to measure more sets than those in $M_\alpha$ in her $\alpha^\textrm{th}$ move, for every $\alpha<\gamma$.

\begin{lemma}\label{barequivalence}
  Let $\gamma\le\kappa^+$, let $\kappa=\kappa^{<\kappa}$ be an uncountable cardinal, and let $\theta>\kappa$ be a regular cardinal.
  \begin{enumerate-(1)}
    \item The challenger has a winning strategy in $G_\gamma^\theta(\kappa)$ iff he has a winning strategy in $\overline{G_\gamma^\theta}(\kappa)$.
    \item The judge has a winning strategy in $G_\gamma^\theta(\kappa)$ iff she has a winning strategy in $\overline{G_\gamma^\theta}(\kappa)$.
  \end{enumerate-(1)}
\end{lemma}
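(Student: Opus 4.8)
The plan is to prove both directions by showing that a winning strategy in one game can be converted into a winning strategy in the other, via a uniform translation of the judge's moves. The key point is the observation already recorded in the footnote to Definition~\ref{filtergamesdefinition}: by Observation~\ref{nonstationarydoesntwork}, if the judge is to have any chance of winning, each filter $F_\alpha$ she plays must already be normal (not merely $M_\alpha$-normal), since otherwise the challenger can at a later stage feed in a $\kappa$-model containing the sets $\D$ witnessing non-normality and force an empty diagonal intersection in $F_\gamma$. So in analyzing winning strategies we may freely assume all the judge's filters are normal. The only difference between $G_\gamma^\theta(\kappa)$ and $\overline{G_\gamma^\theta}(\kappa)$ is whether $F_\alpha$ is allowed to measure subsets of $\kappa$ outside $M_\alpha$; the trick is that a normal filter measuring $\mathcal P(\kappa)\cap M_\alpha$ contains a canonical normal "core" depending only on its trace on $M_\alpha$.

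First I would set up the translation. Given a run of $G_\gamma^\theta(\kappa)$ in which the judge plays normal filters $F_\alpha$, define $\bar F_\alpha := F_\alpha \cap M_\alpha$ for each $\alpha$; this is the sequence of moves the judge makes in the corresponding run of $\overline{G_\gamma^\theta}(\kappa)$ against the same challenger moves $\langle M_\alpha\mid\alpha<\gamma\rangle$. One checks that each $\bar F_\alpha$ is still a filter on $\kappa$ measuring $\mathcal P(\kappa)\cap M_\alpha$ (measuring is a statement only about sets in $M_\alpha$), that $\bar F_\alpha \subseteq M_\alpha$, that the sequence is still $\subseteq$-increasing, and — crucially — that condition (c) is preserved, since $F_{\bar\alpha}\cap M_{\bar\alpha} = \bar F_{\bar\alpha}\cap M_{\bar\alpha} = \bar F_{\bar\alpha}$ (using $\bar F_{\bar\alpha}\subseteq M_{\bar\alpha}$ and $M_{\bar\alpha}\in M_\alpha$ with $M_{\bar\alpha}\subseteq M_\alpha$), so the challenger sees exactly the same information and can make exactly the same moves in both games. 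Conversely, given a run of $\overline{G_\gamma^\theta}(\kappa)$ with the judge playing normal $F_\alpha\subseteq M_\alpha$, I would lift each $F_\alpha$ to the normal filter on $\kappa$ generated by closing $\{\largetriangleup\vec X : \vec X \text{ an enumeration-witness}\}$ — concretely, fix an enumeration $\vec X^\alpha$ of $F_\alpha$ and replace $F_\alpha$ by $\{A\subseteq\kappa : \largetriangleup\vec X^\alpha \subseteq A\}$, which by Lemma~\ref{characterization of normal filters} is a normal filter on $\kappa$ (it is normal iff $\largetriangleup\vec X^\alpha$ is stationary, which holds because $F_\alpha$ was normal) measuring all of $\mathcal P(\kappa)$, whose trace on $M_\alpha$ is again $F_\alpha$. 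To get an increasing sequence one takes intersections of diagonal intersections, or more simply, notes that the challenger's response depends only on the traces $F_\alpha\cap M_\alpha$, which are unchanged, so the challenger plays the same $M_\alpha$ either way and $\subseteq$-increase of the original $F_\alpha$ propagates.

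Next I would verify that the winning condition is preserved under the translation, in both directions: the judge wins the original run iff she wins the translated run. Set $M_\gamma=\bigcup_\alpha M_\alpha$. In the $G\to\overline G$ direction, one needs $F_\gamma = \bigcup_\alpha F_\alpha$ is $M_\gamma$-normal iff $\bar F_\gamma=\bigcup_\alpha \bar F_\alpha$ is $M_\gamma$-normal. Since any $\vec X\in M_\gamma$ lies in some $M_\alpha$, and a diagonal intersection $\largetriangleup\vec X$ is itself a subset of $\kappa$ that $F_\gamma$ (resp.\ $\bar F_\gamma$) measures by stage $\alpha$ if $\vec X\in M_\alpha$, the relevant closure conditions transfer; the point is that $F_\gamma\cap M_\gamma = \bar F_\gamma\cap M_\gamma$ (again using $\bar F_\alpha = F_\alpha\cap M_\alpha$ and that $M_\gamma=\bigcup M_\alpha$ is increasing), and $M_\gamma$-normality of a filter depends only on its trace on $M_\gamma$ together with the requirement that it measure $\mathcal P(\kappa)\cap M_\gamma$ — so $F_\gamma$ is $M_\gamma$-normal iff $F_\gamma\cap M_\gamma$ is a "normal trace" on $M_\gamma$ iff $\bar F_\gamma$ is $M_\gamma$-normal. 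The $\overline G\to G$ direction is symmetric, using the lifted filters, whose traces on $M_\gamma$ also agree with the original ones. One also has to treat the edge case where the judge's strategy tells her to play a non-normal $F_\alpha$ at some stage: such a strategy is not winning (by Observation~\ref{nonstationarydoesntwork}, the challenger can defeat it), so it loses nothing to assume winning strategies produce normal filters, and a non-winning strategy need not be translated.

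The main obstacle I anticipate is bookkeeping around condition~(c) and the $\subseteq$-increasing requirement under the lifting map $\overline G\to G$: naively lifting each $F_\alpha$ to the normal filter generated by $\largetriangleup\vec X^\alpha$ need not yield a $\subseteq$-increasing sequence of filters on $\kappa$, even though the traces are increasing, because the generating diagonal intersections for different $\alpha$ are unrelated sets. The fix is to define the lift recursively: at stage $\alpha$, let $F_\alpha^* $ be the normal filter generated by $\largetriangleup\vec X^\alpha \cap \bigcap_{\beta<\alpha}\langle\text{generators so far}\rangle$ — but this requires care when $\gamma$ can be as large as $\kappa^+$, since one is intersecting up to $\kappa$-many (or at successor stages, boundedly-many) stationary sets and must ensure the result stays stationary. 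Here one uses that the $F_\alpha$ are $\subseteq$-increasing and each normal, so their diagonal intersections are decreasing modulo clubs (by the argument in Lemma~\ref{characterization of normal filters}), and a single stationary "spine" can be threaded through; alternatively, and more cleanly, one observes that the judge never actually needs the lifted filters to be literally $\subseteq$-increasing on all of $\mathcal P(\kappa)$ — the challenger only inspects traces on the $M_\alpha$, and the final winning condition only concerns $M_\gamma$-normality of $F_\gamma$, which as noted is a condition on the trace $F_\gamma\cap M_\gamma$ together with $F_\gamma$ measuring $\mathcal P(\kappa)\cap M_\gamma$. So one can simply define $F_\alpha$ in the $G$-run to be the normal filter generated by $\largetriangleup\vec X^\alpha$ where $\vec X^\alpha$ enumerates $\bar F_\alpha$, check its trace on $M_\alpha$ equals $\bar F_\alpha$ hence agrees with what the challenger expects and is increasing in $\alpha$ on the relevant coordinates, and check the winning condition as above. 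This reduces the whole argument to the single structural fact that $M$-normality of a filter measuring $\mathcal P(\kappa)\cap M$ is determined by its restriction to $M$, which is essentially immediate from the definition.
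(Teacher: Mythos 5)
Your proposal does, in its final paragraph, arrive at exactly the argument the paper uses: the challenger's legal moves (via condition (c)) and the winning condition both depend only on the traces $F_\alpha\cap M_\alpha$, and a filter measuring $\mathcal P(\kappa)\cap M_\gamma$ is $M_\gamma$-normal iff its restriction to $M_\gamma$ is; so one simulates either game inside the other by having the judge's moves replaced with their intersections with the $M_\alpha$'s. However, everything before that is an unnecessary detour, and parts of it are wrong if taken literally. First, the reduction to normal filters via Observation \ref{nonstationarydoesntwork} plays no role in this lemma: the nontrivial directions (challenger wins $\overline{G_\gamma^\theta}(\kappa)$ $\Rightarrow$ challenger wins $G_\gamma^\theta(\kappa)$, and judge wins $G_\gamma^\theta(\kappa)$ $\Rightarrow$ judge wins $\overline{G_\gamma^\theta}(\kappa)$) are handled by the intersect-with-$M_\alpha$ simulation for \emph{arbitrary} filters, normal or not, and for the challenger direction you cannot restrict attention to normal judge plays anyway, since his strategy must defeat all of them. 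Second, the entire lifting construction from $\overline{G_\gamma^\theta}(\kappa)$ to $G_\gamma^\theta(\kappa)$ is never needed, because $\overline{G_\gamma^\theta}(\kappa)$ only \emph{restricts} the judge's options: any legal play of $\overline{G_\gamma^\theta}(\kappa)$ is already a legal play of $G_\gamma^\theta(\kappa)$, so a challenger's winning strategy transfers from $G$ to $\overline{G}$ and a judge's winning strategy transfers from $\overline{G}$ to $G$ with no translation at all. Moreover, the lift you describe, $\{A\subseteq\kappa \mid \largetriangleup\vec X^\alpha\subseteq A\}$, does not measure all of $\mathcal P(\kappa)$ as claimed, and its trace on $M_\alpha$ need not literally equal $F_\alpha$ (elements $X^\alpha_\beta$ of $F_\alpha$ only contain $\largetriangleup\vec X^\alpha$ up to a bounded set); the bookkeeping problems you flag in your last paragraph are symptoms of this route being the wrong one. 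In short: keep the last paragraph, delete the first two.
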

\begin{proof}
  If the challenger has a winning strategy in $G_\gamma^\theta(\kappa)$, then he has one in $\overline{G_\gamma^\theta}(\kappa)$, as the latter game only gives less choice for the judge. Assume the challenger has a winning strategy $\bar S$ in $\overline{G_\gamma^\theta}(\kappa)$. Let $S$ be the strategy for $G_\gamma^\theta(\kappa)$ where the challenger pretends that the judge had played $F_i\cap M_i$ rather than $F_i$, at every stage $i$ of a play of $G_\gamma^\theta(\kappa)$, and the challenger responds according to that, following the strategy $\bar S$. This yields a run of the game $\overline{G_\gamma^\theta}(\kappa)$ where the challenger follows his winning strategy, hence the judge loses this play, i.e.\ $F_\gamma\cap M_\gamma$ is not $M_\gamma$-normal. But then the same is the case for $F_\gamma$, i.e.\ $S$ is a winning strategy for the challenger in the game $G_\gamma^\theta(\kappa)$.
  
 If the judge has a winning strategy in $\overline{G_\gamma^\theta}(\kappa)$, then this is also a winning strategy in $G_\gamma^\theta(\kappa)$. If she has a winning strategy $S$ in $G_\gamma^\theta(\kappa)$, let $\bar S$ be the modification where rather than playing $F_i$, she plays $F_i\cap M_i$, at each stage $i<\gamma$. %Or more exactly, if $S$ would have told her to respond to $\langle F_j,M_j\mid j<i\rangle$ by playing $F_i$, then $\bar S$ tells her to respond to $\langle F_j\cap\bigcup_{k<j}M_k,M_j\mid j<i\rangle$ by playing $F_i\cap\bigcup_{j<i}M_j$.
 Since $S$ is a winning strategy, $F_\gamma$ is $M_\gamma$-normal, whenever it is the outcome of a play of $G_\gamma^\theta(\kappa)$. But then also $F_\gamma\cap M_\gamma$ is $M_\gamma$-normal. Hence $\bar S$ is also a winning strategy for $G_\gamma^\theta(\kappa)$. But every play of $G_\gamma^\theta(\kappa)$ following $\bar S$ is also a run of the game $\overline{G_\gamma^\theta}(\kappa)$, i.e.\ $\bar S$ is a winning strategy for $\overline{G_\gamma^\theta}(\kappa)$.
\end{proof}

\begin{lemma}\label{cardinalsequivalence}
  Let $\gamma\le\kappa^+$, let $\kappa=\kappa^{<\kappa}$ be an uncountable cardinal, and let $\theta_0$ and $\theta_1$ both be regular cardinals greater than $\kappa$.
  \begin{enumerate-(1)}
    \item The challenger has a winning strategy in $G_\gamma^{\theta_0}(\kappa)$ iff he has a winning strategy in $G_\gamma^{\theta_1}(\kappa)$.
    \item The judge has a winning strategy in $G_\gamma^{\theta_0}(\kappa)$ iff she has a winning strategy in $G_\gamma^{\theta_1}(\kappa)$.
  \end{enumerate-(1)}
\end{lemma}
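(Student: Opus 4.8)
\emph{Plan.} The plan is to exploit the fact that whether the final filter $F_\gamma=\bigcup_{\alpha<\gamma}F_\alpha$ of a run is $M_\gamma$-normal, where $M_\gamma=\bigcup_{\alpha<\gamma}M_\alpha$, depends on $M_\gamma$ only through which subsets of $\kappa$, and which sequences $\langle X_\alpha\mid\alpha<\kappa\rangle$ of subsets of $\kappa$, are elements of $M_\gamma$ --- and all such objects lie in the fixed set $H(\kappa^+)$, independently of $\theta$. Thus a run of one of the two games can be simulated inside a run of the other using identical filter moves, with the model moves replaced by suitable elementary submodels, and this simulation preserves the winner.

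First I would record a \emph{transfer principle}: if $M,M'$ are sets such that every subset of $\kappa$ lying in $M$ also lies in $M'$, and every sequence $\langle X_\alpha\mid\alpha<\kappa\rangle$ of subsets of $\kappa$ lying in $M$ also lies in $M'$, and $F$ is a filter on $\kappa$ that is $M'$-normal, then $F$ is $M$-normal; this is immediate by unwinding Definition \ref{definition:filter}(e), and in particular, if these conditions hold in both directions, then $M$-normality and $M'$-normality of $F$ agree. Then I would prove two routine fattening lemmas, each obtained by the usual construction of a ${<}\kappa$-closed elementary submodel of the relevant $H(\theta)$ containing a prescribed set of size ${\leq}\kappa$ (possible since $\kappa^{<\kappa}=\kappa$): (A) if $M\prec H(\theta_0)$ is a $\kappa$-model, $\theta_0<\theta_1$, and $Z\subseteq H(\theta_1)$ has size ${\leq}\kappa$, then there is a $\kappa$-model $M'\prec H(\theta_1)$ with $M\cup Z\subseteq M'$ (using $H(\theta_0)\subseteq H(\theta_1)$); and (B) if $N\prec H(\theta_1)$ is a $\kappa$-model, $\kappa<\theta_0<\theta_1$, and $Z\subseteq H(\theta_0)$ has size ${\leq}\kappa$, then there is a $\kappa$-model $M\prec H(\theta_0)$ with $Z\subseteq M$ such that every subset of $\kappa$ lying in $N$ and every $\kappa$-sequence of subsets of $\kappa$ lying in $N$ is an element of $M$ --- this works because there are only ${\leq}\kappa$ such objects and they all lie in $H(\kappa^+)\subseteq H(\theta_0)$, so they may simply be adjoined to $Z$.

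With these tools in hand, and assuming without loss of generality $\theta_0<\theta_1$, all four required implications follow from the same simulation; I would present one representative case, namely that a winning strategy $S$ for the challenger in $G_\gamma^{\theta_1}(\kappa)$ yields one in $G_\gamma^{\theta_0}(\kappa)$. The new strategy maintains a virtual run of $G_\gamma^{\theta_1}(\kappa)$ in which the virtual challenger follows $S$. At stage $\alpha$, $S$ dictates a virtual move $M'_\alpha\prec H(\theta_1)$; using (B), the real challenger plays a $\kappa$-model $M_\alpha\prec H(\theta_0)$ that contains the previous real moves (so that clause (c) of the game is met) together with every subset of $\kappa$ and every relevant $\kappa$-sequence lying in $M'_\alpha$. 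When the real judge answers with $F_\alpha$, then, since $\mathcal P(\kappa)\cap M'_\alpha\subseteq\mathcal P(\kappa)\cap M_\alpha$, this $F_\alpha$ is itself a legal virtual judge move, so we feed it back and let $S$ continue. At the end, $F_\gamma=\bigcup_{\alpha<\gamma}F_\alpha$ is the final filter of both runs, and every subset of $\kappa$ and every relevant $\kappa$-sequence lying in $M'_\gamma=\bigcup_{\alpha<\gamma}M'_\alpha$ also lies in $M_\gamma=\bigcup_{\alpha<\gamma}M_\alpha$; since $S$ is winning, $F_\gamma$ is not $M'_\gamma$-normal, so by the transfer principle $F_\gamma$ is not $M_\gamma$-normal, i.e.\ the challenger wins $G_\gamma^{\theta_0}(\kappa)$. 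The other three implications are analogous: the "upward" ones (passing from $\theta_0$ to $\theta_1$) use (A) in place of (B) and need only that $M_\alpha\subseteq M'_\alpha$, and in the two judge cases one copies the moves produced by the given winning strategy into the real run rather than the reverse.

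The only genuinely substantive point --- and the place a careless argument would break down --- is that one must transfer not just the subsets of $\kappa$ occurring in the models but also the $\kappa$-sequences of such subsets, because the diagonal-intersection clause defining $M$-normality quantifies over sequences $\langle X_\alpha\mid\alpha<\kappa\rangle\in M$, and agreement on $\mathcal P(\kappa)\cap M$ together with ${<}\kappa$-closure of the models does not suffice to control these. These sequences nevertheless still lie in $H(\kappa^+)$, which is precisely why lemma (B) can accommodate them; everything else is routine bookkeeping about strategies.
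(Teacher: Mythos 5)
Your proposal is correct and follows essentially the same route as the paper: simulate the given strategy across the two games, keeping the judge's filter moves identical and replacing each model move by a model over the other $H(\theta)$ that contains all the relevant objects from $\mathcal P(\kappa)$, then transfer (non-)normality of $F_\gamma$ between the two unions of models. Your extra insistence on also importing the $\kappa$-sequences of subsets of $\kappa$ (all of which live in $H(\kappa^+)$) is a sound and worthwhile tightening of the diagonal-intersection step, which the paper's own proof states only loosely via the condition $M_\alpha^*\supseteq\mathcal P(\kappa)\cap M_\alpha$.
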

\begin{proof}
Let $\gamma$ be an ordinal, and assume that $\theta_0$ and $\theta_1$ are both regular cardinals greater than $\kappa$. For (1), assume that the challenger has a winning strategy $\sigma_0$ in $G_\gamma^{\theta_0}(\kappa)$. We show that he then has a winning strategy $\sigma_1$ in $G_\gamma^{\theta_1}(\kappa)$. $\sigma_1$ is obtained as follows. Whenever the challenger would play $M_\alpha$ in a run of the game $G_\gamma^{\theta_0}(\kappa)$, then he plays some $M_\alpha^*$ which is a valid move in the game $G_\gamma^{\theta_1}(\kappa)$ and such that $M_\alpha^*\supseteq\mathcal P(\kappa)\cap M_\alpha$. Every possible response of the judge in $G_\gamma^{\theta_1}(\kappa)$ is also a possible response in $G_\gamma^{\theta_0}(\kappa)$, where the challenger played $M_\alpha$. So the challenger can continue to pretend playing both these games simultaneously. As he is following a winning strategy in the game $G_\gamma^{\theta_0}(\kappa)$, $F_\gamma$ is not $M_\gamma$-normal. But then $F_\gamma$ is not $\bigcup_{\alpha<\gamma}M_\alpha^*$-normal either. This shows that $\sigma_1$ is a winning strategy for the challenger in the game $G_\gamma^{\theta_1}(\kappa)$.

  For (2), assume that the judge has a winning strategy $\sigma_0$ in $G_\gamma^{\theta_0}(\kappa)$. We show that she then has a winning strategy $\sigma_1$ in $G_\gamma^{\theta_1}(\kappa)$. $\sigma_1$ is obtained by simply pretending that, if the challenger plays $M_\alpha$ at any stage $\alpha$ of the game $G_\gamma^{\theta_1}(\kappa)$, he in fact played some $M_\alpha^*$ in the game $G_\gamma^{\theta_0}(\kappa)$ with the property that $M_\alpha^*\supseteq M_\alpha\cap\mathcal P(\kappa)$, and respond according to that. Since $\sigma_0$ is a winning strategy for the judge in the game $G_\gamma^{\theta_0}(\kappa)$, $F_\gamma$ is $\bigcup_{\alpha<\gamma}M_{\alpha}^*$-normal. But then $F_\gamma$ will also be $M_\gamma$-normal. This shows that $\sigma_1$ is a winning strategy for the judge in $G_\gamma^{\theta_1}(\kappa)$.
\end{proof}

In the light of Lemma \ref{cardinalsequivalence}, we can make the following definition.

\begin{definition}\label{def:filterproperties}
Suppose $\kappa=\kappa^{<\kappa}$ is an uncountable cardinal, $\theta>\kappa$ is a regular cardinal, and $\gamma\le\kappa^+$. 
\begin{enumerate-(a)} 
\item 
$\kappa$ has the \emph{$\gamma$-filter property} if the challenger does not have a winning strategy in $G_\gamma^\theta(\kappa)$.
\item
$\kappa$ has the \emph{strategic $\gamma$-filter property} if the judge has a winning strategy in $G_\gamma^\theta(\kappa)$. 
\end{enumerate-(a)} 
\end{definition} 

The $1$-filter property follows from weak compactness by its embedding characterization, and implies the filter property, hence it is equivalent to weak compactness. Note that if $\gamma_0<\gamma_1$, then the $\gamma_1$-filter property implies the $\gamma_0$-filter property. The following observation shows that assuming $2^\kappa=\kappa^+$, the $\kappa^+$-filter property is equivalent to $\kappa$ being a measurable cardinal.

\begin{observation} 
  The following are equivalent for any uncountable cardinal $\kappa=\kappa^{<\kappa}$ satisfying $2^\kappa=\kappa^+$.
\begin{enumerate-(a)}
  \item $\kappa$ satisfies the $\kappa^+$-filter property.
  \item $\kappa$ satisfies the strategic $\kappa^+$-filter property.
  \item $\kappa$ is measurable. \footnote{One could extend our definitions in a natural way so to give rise to the concept of $\kappa$ having the $\gamma$-filter property also for ordinals $\gamma>\kappa^+$, essentially dropping the requirement that the models played by the challenger have size $\kappa$. This would however make our definitions less elegant, and was omitted for we will mostly be interested in the case when $\gamma\le\kappa$ in what follows. However right now, these extended definitions would yield the more elegant observation that $\kappa$ being measurable is equivalent to it having the (strategic) $2^\kappa$-filter property.}
\end{enumerate-(a)}
\end{observation}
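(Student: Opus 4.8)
The plan is to establish the cycle of implications (c)$\Rightarrow$(b)$\Rightarrow$(a)$\Rightarrow$(c). The implication (b)$\Rightarrow$(a) is immediate, since in a two-player game of perfect information the two players cannot both possess winning strategies. For (c)$\Rightarrow$(b), assume $\kappa$ is measurable and fix a normal measure $U$ on $\kappa$. The strategy for the judge which plays $F_\alpha:=U$ at every stage is legal in $G_{\kappa^+}^\theta(\kappa)$: $U$ is a filter on $\kappa$ in the sense of Definition~\ref{definition:filter}, it measures every subset of $\kappa$ (so in particular $\mathcal P(\kappa)\cap M_\alpha$), and it trivially contains every $F_\beta$ with $\beta<\alpha$. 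At the end $F_{\kappa^+}=U$, and since $U$ is a normal ultrafilter it is closed under diagonal intersections of $\kappa$-sequences of its elements and each such diagonal intersection is stationary; hence $U$ is $M_{\kappa^+}$-normal (for any set $M_{\kappa^+}$ with $\kappa\subseteq M_{\kappa^+}$) and the judge wins.

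For (a)$\Rightarrow$(c), the idea is to have the challenger run a bookkeeping strategy forcing $M_{\kappa^+}:=\bigcup_{\alpha<\kappa^+}M_\alpha$ to contain all of $H(\kappa^+)$. Since $2^\kappa=\kappa^+$ we have $|H(\kappa^+)|=\kappa^+$, so fix an enumeration $H(\kappa^+)=\{a_\alpha\mid\alpha<\kappa^+\}$. At stage $\alpha<\kappa^+$ the challenger plays some (say, the least with respect to a fixed well-ordering of the relevant $\kappa$-models) $M_\alpha\prec H(\theta)$ which includes $\bigcup_{\bar\alpha<\alpha}M_{\bar\alpha}$ as a subset, has $\langle M_{\bar\alpha}\mid\bar\alpha<\alpha\rangle$ and $\langle F_{\bar\alpha}\cap M_{\bar\alpha}\mid\bar\alpha<\alpha\rangle$ as elements, and has $a_\alpha$ as an element. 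Such an $M_\alpha$ exists: the collection of objects to be included has size $\leq\kappa$ (using $|\alpha|\leq\kappa$ and that each $M_{\bar\alpha}$ and each $F_{\bar\alpha}\cap M_{\bar\alpha}$ has size $\leq\kappa$), is an element of $H(\theta)$ (using the regularity of $\theta$), and one then closes off under ${<}\kappa$-sequences and elementarity in $H(\theta)$ using $\kappa^{<\kappa}=\kappa$. This defines a legal strategy for the challenger; by the $\kappa^+$-filter property it is not winning, so there is a play consistent with it in which the judge wins, i.e. $F:=\bigcup_{\alpha<\kappa^+}F_\alpha$ is $M_{\kappa^+}$-normal, with $M_{\kappa^+}\supseteq H(\kappa^+)\supseteq\mathcal P(\kappa)$. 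Hence $F$ is a filter on $\kappa$ measuring every subset of $\kappa$, and, since every $\kappa$-sequence of subsets of $\kappa$ belongs to $H(\kappa^+)\subseteq M_{\kappa^+}$, $F$ is closed under diagonal intersections of all $\kappa$-sequences of its elements.

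It then remains to check that the upward closure $U:=\{A\subseteq\kappa\mid\exists B\in F\ B\subseteq A\}$ is a ${<}\kappa$-complete nonprincipal ultrafilter on $\kappa$, which witnesses that $\kappa$ is measurable. That no bounded set lies in $U$ and that $\emptyset\notin U$ holds because every element of $F$ has size $\kappa$; that $U$ measures every subset of $\kappa$ holds because $F$ does; that $A$ and $\kappa\setminus A$ are never both in $U$, and that $U$ is closed under finite intersections, both follow from the strong finite intersection property of the filter $F$ (a putative failure produces finitely many elements of $F$ whose intersection is empty, hence not of size $\kappa$). Finally, for ${<}\kappa$-completeness one takes $\gamma<\kappa$ and $X_i\in U$ for $i<\gamma$, picks $Y_i\in F$ with $Y_i\subseteq X_i$, sets $Z_\alpha:=Y_\alpha$ for $\alpha<\gamma$ and $Z_\alpha:=\kappa$ otherwise (noting $\kappa\in F$, since $F$ measures $\kappa$ and $\emptyset\notin F$), and observes that $\largetriangleup\langle Z_\alpha\mid\alpha<\kappa\rangle\in F$ differs from $\bigcap_{i<\gamma}Y_i$ only on a bounded set, whence $\bigcap_{i<\gamma}X_i\in U$ because $U$ is already known to be a nonprincipal ultrafilter.

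The main obstacle is not any single computation but the cumulative care required in the last two paragraphs: one must verify that the paper's weak, non-upward-closed notion of filter, once accumulated along a play of length $\kappa^+$ and upward-closed, genuinely yields a ${<}\kappa$-complete ultrafilter, and one must make sure the challenger's bookkeeping always constitutes a legal move --- in particular that the relevant sequences of models and of filter-restrictions really are elements of $H(\theta)$ and can be absorbed into a $\kappa$-model elementary in $H(\theta)$.
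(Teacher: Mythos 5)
Your proof is correct and follows essentially the same route as the paper: the judge plays a fixed normal measure for (c)$\Rightarrow$(b), and for (a)$\Rightarrow$(c) the challenger's bookkeeping (using $2^\kappa=\kappa^+$) forces all of $\mathcal P(\kappa)$ into $M_{\kappa^+}$, so that a winning play for the judge yields a normal ultrafilter. The only difference is that you spell out the verification that the accumulated (non-upward-closed) filter really yields a ${<}\kappa$-complete ultrafilter, which the paper leaves implicit.
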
 
\begin{proof} 
  For the implication from (a) to (c), suppose that $\kappa$ has the $\kappa^+$-filter property, and that $\langle a_\alpha\mid\alpha< \kappa^+\rangle$ is an enumeration of $\mathcal P(\kappa)$. Let $\theta>\kappa$ be an arbitrary regular cardinal. We consider a run of the game $G_{\kappa^+}^\theta(\kappa)$ such that in each step $\alpha$, the challenger plays a valid $M_\alpha\supseteq\{a_\beta\mid\beta\le\alpha\}$, however the judge wins. Then, $F_\gamma$ is a normal ultrafilter on $\mathcal P(\kappa)$. 

To see that (c) implies (b), suppose that $\kappa$ is measurable and let $F$ be a ${<}\kappa$-complete ultrafilter on $\mathcal P(\kappa)$. Then, for any regular $\theta>\kappa$, the judge wins any run of $G_{\kappa^+}^\theta(\kappa)$ by playing $F$ in each of her moves.

Finally, the implication from (b) to (a) is immediate.
\end{proof} 

We will show that the $\alpha$-filter properties for infinite cardinals $\alpha$ with $\omega\le\alpha\le\kappa$ give rise to a proper hierarchy of large cardinal notions, that are closely related to the following Ramsey-like cardinals, that were introduced by Victoria Gitman in \cite{MR2830415}.

\section{Victoria Gitman's Ramsey-like cardinals}\label{gitman}

\begin{definition}\quad
\begin{enumerate-(a)}
  \item \cite[Definition 1.2]{MR2830415} A cardinal $\kappa$ is \emph{weakly Ramsey} if every $A\subseteq\kappa$ is contained, as an element, in a weak $\kappa$-model $M$ for which there exists a $\kappa$-powerset preserving elementary embedding $j\colon M\to N$.
  \item \cite[Definition 1.4]{MR2830415} A cardinal $\kappa$ is \emph{strongly Ramsey} if every $A\subseteq\kappa$ is contained, as an element, in a $\kappa$-model $M$ for which there exists a $\kappa$-powerset preserving elementary embedding $j\colon M\to N$.
  \item \cite[Definition 1.5]{MR2830415} A cardinal $\kappa$ is \emph{super Ramsey} if every $A\subseteq\kappa$ is contained, as an element, in a $\kappa$-model $M\prec H(\kappa^+)$ for which there exists a $\kappa$-powerset preserving elementary embedding $j\colon M\to N$.
\end{enumerate-(a)}
\end{definition}

The following proposition is an immediate consequence of \cite[Theorem 3.7]{MR2830415}, where Gitman shows that weakly Ramsey cardinals are limits of \emph{completely ineffable} cardinals (see \cite[Definition 3.4]{MR2830415}). It yields in particular that weak Ramseyness is strictly stronger than weakly compactness.

\begin{proposition}\cite{MR2830415}
  Weakly Ramsey cardinals are weakly compact limits of ineffable cardinals.
\end{proposition}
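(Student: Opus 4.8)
The statement packages together two claims about a weakly Ramsey cardinal $\kappa$: that $\kappa$ is weakly compact, and that $\kappa$ is a limit of ineffable cardinals. These are genuinely separate assertions and need different arguments, since a limit of ineffable — hence weakly compact — cardinals need not itself be weakly compact (it may even be singular). So I would prove the two halves independently.

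For the ``limit of ineffable cardinals'' half I would simply quote Gitman's \cite[Theorem 3.7]{MR2830415}, which gives that a weakly Ramsey cardinal is a limit of \emph{completely ineffable} cardinals, and combine it with the standard observation that every completely ineffable cardinal is ineffable: the nonempty collection $\mathcal{R}$ of stationary subsets witnessing complete ineffability (\cite[Definition 3.4]{MR2830415}) is closed under passing to suitable homogeneous/coherent subsets, so feeding a given $\kappa$-list to any member of $\mathcal{R}$ produces a stationary set witnessing ineffability. Hence a limit of completely ineffable cardinals is a limit of ineffable cardinals, which is the half in question.

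For ``$\kappa$ is weakly compact'' I would argue directly from the embeddings supplied by weak Ramseyness. First, a standard argument shows $\kappa$ is inaccessible: given $A\subseteq\kappa$ coding a purported cofinal map $f\colon\gamma\to\kappa$ with $\gamma<\kappa$, place $A$ into a weak $\kappa$-model $M$ carrying a $\kappa$-powerset preserving $j\colon M\to N$; since $\gamma<\kappa$ is below the critical point and $\mathrm{ran}(f)\subseteq\kappa$, the map $j$ fixes $f$, so by elementarity $N$ believes $\mathrm{ran}(f)$ is cofinal in $j(\kappa)$, contradicting $\mathrm{ran}(f)\subseteq\kappa<j(\kappa)$; strong-limitness is similar, and in any case both are noted in \cite{MR2830415}. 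It then suffices to verify the tree property: we may assume a given tree $T$ of height $\kappa$ with levels of size $<\kappa$ has underlying set $\kappa$, so $T$ is (coded by) a subset of $\kappa$; place $T$ into a weak $\kappa$-model $M$ with a $\kappa$-powerset preserving $j\colon M\to N$; then $M$ correctly computes the level structure of $T$, each level $T_\alpha$ with $\alpha<\kappa$ has size below the critical point and is therefore fixed by $j$, so $j(T)\restr\kappa=T$, and the set of $j(T)$-predecessors of any node of $j(T)$ on its $\kappa$-th level is a cofinal branch of $T$. Thus $\kappa$ is inaccessible with the tree property, i.e.\ weakly compact. (Alternatively, one may just cite that weakly Ramsey cardinals are weakly compact, or, once $\kappa^{<\kappa}=\kappa$ is in hand, derive the $M$-normal filter property via Lemma \ref{wapp_equivalence} and invoke the filter-property characterization of weak compactness.)

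I do not expect a real obstacle, because the whole substance of the proposition is concentrated in Gitman's Theorem 3.7, which the statement explicitly invokes and which I treat as a black box; everything else is routine bookkeeping with the embeddings. If instead one wanted to reprove Theorem 3.7, the crux would be manufacturing completely ineffable cardinals cofinally below $\kappa$ — typically by reflecting or iterating the given embedding and then checking that the resulting smaller structures genuinely certify \emph{complete} ineffability rather than merely ineffability or weak compactness — but that analysis lies outside what this corollary requires.
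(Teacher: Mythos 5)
Your proposal is correct and matches the paper's approach: the paper offers no proof beyond remarking that the proposition is an immediate consequence of Gitman's Theorem 3.7 (weakly Ramsey cardinals are limits of completely ineffable cardinals), which is exactly the black box your argument rests on. Your explicit verification that $\kappa$ itself is weakly compact (inaccessibility plus the tree property via the $\kappa$-powerset preserving embedding, using regularity of $\kappa$ to see that the small levels are bounded and hence fixed by $j$) is standard and correct; the paper simply leaves this part implicit.
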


The following theorem from \cite{MR2830415}, which is already implicit in \cite{MR534574}, %and \cite{MR652253},
shows that strongly Ramsey cardinals are Ramsey cardinals, which in turn are weakly Ramsey. In fact, as is shown in \cite[Theorems 3.9 and 3.11]{MR2830415}, strongly Ramsey cardinals are Ramsey limits of Ramsey cardinals, and Ramsey cardinals are weakly Ramsey limits of weakly Ramsey cardinals.

\begin{theorem}\cite[Theorem 1.3]{MR2830415}\label{ramseyembeddings}
  A cardinal $\kappa$ is Ramsey if and only if every $A\subseteq\kappa$ is contained, as an element, in a weak $\kappa$-model $M$ for which there exists a $\kappa$-powerset preserving elementary embedding $j\colon M\to N$ with the additional property that whenever $\langle A_n\mid n\in\omega\rangle$ is a sequence of subsets of $\kappa$ (that is not necessarily an element of $M$) such that for each $n\in\omega$, $A_n\in M$ and $\kappa\in j(A_n)$, then $\bigcap_{n\in\omega}A_n\ne\emptyset$.
\end{theorem}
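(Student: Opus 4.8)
The plan is to prove the two directions separately, in each case passing through the ``good ultrafilter'' formulation of Ramseyness: a cardinal $\kappa$ is Ramsey if and only if every $A\subseteq\kappa$ is contained in a weak $\kappa$-model $M$ for which there is a good (i.e.\ weakly amenable, countably complete) $M$-normal filter on $\kappa$. Granting this, Lemma \ref{wapp_equivalence} already translates between $\kappa$-powerset preserving embeddings $j\colon M\to N$ and weakly amenable $M$-normal filters inducing well-founded ultrapowers, so the real content is to match the extra combinatorial clause in the theorem — that $\bigcap_{n\in\omega}A_n\neq\emptyset$ for every sequence $\langle A_n\mid n\in\omega\rangle$ of $M$-sets with $\kappa\in j(A_n)$ — with countable completeness of the induced filter $U=\{A\in\mathcal P(\kappa)^M\mid\kappa\in j(A)\}$. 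That clause says precisely that any countable subfamily of $U$ (drawn from $M$, but the family itself need not be in $M$) has nonempty intersection, which is the external countable completeness of $U$; conversely a countably complete $U$ gives well-foundedness of the ultrapower for free and yields the clause. So the two characterizations are essentially a restatement of each other once Lemma \ref{wapp_equivalence} is in hand, and the work reduces to the classical equivalence between Ramseyness and the existence of good $M$-normal filters on all weak $\kappa$-models capturing a given $A$.

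For the direction from the embedding characterization to Ramseyness, I would fix a coloring $f\colon[\kappa]^{<\omega}\to 2$ and a set $A\subseteq\kappa$ coding $f$ (and enough of $V_\kappa$ or $H(\kappa^+)$-structure to make the argument go through), take a weak $\kappa$-model $M\ni A,f$ with a $\kappa$-powerset preserving $j\colon M\to N$ satisfying the countable-intersection clause, and let $U$ be the derived $M$-normal filter. Weak amenability lets one build, inside $M$, a decreasing $\omega$-sequence $\langle H_n\mid n\in\omega\rangle$ of sets in $U$ that are homogeneous for $f$ restricted to $n$-tuples (the standard ``shrinking'' construction: having $H_n\in U$, the set of $\alpha$ such that $f(\{\alpha\}\cup s)$ takes the homogeneous value for all $s\in[\,\cdot\,]^{n}$ below $\alpha$ is again in $U$ by normality); the countable-intersection clause then provides a point, and in fact one extracts an honestly homogeneous set $H=\bigcap_n H_n$ of size $\kappa$ for $f$. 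Here weak amenability is what guarantees the auxiliary sequence lives in $M$ so that the ultrafilter computations stay internal, and $M$-normality is what drives the one-step shrinking — this is the heart of the Mitchell/Gitman argument.

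For the converse, starting from a Ramsey cardinal $\kappa$ and a set $A\subseteq\kappa$, the plan is the standard iterated-elementary-submodel construction: build an increasing $\omega$-chain of weak $\kappa$-models $M_i\prec H(\kappa^+)$ with $A\in M_0$, and along the way construct an increasing chain of approximations $U_i$ to an $M_i$-normal filter, using Ramseyness to find, at each stage, homogeneous sets for the (at most $\kappa$-many) colorings coded in $M_i$ that decide membership in the filter coherently; then $M=\bigcup_i M_i$ and $U=\bigcup_i U_i$, with $U$ weakly amenable for $M$ by construction and countably complete because any countable family of its members already appears inside some $M_i$ and Ramseyness (applied to the relevant coloring) forces a common point. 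Finally apply Lemma \ref{wapp_equivalence}(1) to get the $\kappa$-powerset preserving embedding $j\colon M\to N$, and observe that countable completeness of $U$ is exactly the required $\bigcap_{n}A_n\neq\emptyset$ clause for all $M$-indexed $\omega$-sequences.

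The main obstacle is the bookkeeping in this iterated construction: one must arrange, using only $\kappa$-many applications of the Ramsey partition property at each of $\omega$ stages, that the $U_i$ cohere into a single $M$-normal filter that both measures all of $\mathcal P(\kappa)\cap M$ and is weakly amenable, while simultaneously ensuring countable completeness. The trick — due to Mitchell and made explicit by Gitman — is to code, at stage $i$, a single coloring whose homogeneous sets simultaneously decide all the finitely-many-parameter statements needed (membership of the $\kappa$-many sets of $M_i$ in the filter, the diagonal-intersection/normality requirements, and the ``$\kappa\in j(\,\cdot\,)$'' pattern for the countably many previously-chosen sets), so that a single homogeneous set of size $\kappa$ does all the work at once; verifying that such a master coloring exists and that its homogeneous sets really do yield a countably complete weakly amenable $M$-normal filter is the delicate part, and everything else is routine given Lemma \ref{wapp_equivalence}.
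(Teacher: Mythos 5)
This theorem is quoted from \cite[Theorem 1.3]{MR2830415} (and is implicit in Mitchell's work); the present paper states it without proof, so there is no internal argument to compare yours against. Your plan is, in substance, the standard Mitchell--Gitman argument: the countable-intersection clause is precisely external countable completeness of the derived filter $U=\{A\in\mathcal P(\kappa)^M\mid\kappa\in j(A)\}$, the forward direction is the Rowbottom-style shrinking argument driven by $M$-normality and weak amenability, and the converse is the iterated-hull construction whose ``master coloring'' step is the good-indiscernibles trick; you correctly identify that step as the delicate one. Two points to fix. First, your gloss ``good (i.e.\ weakly amenable, countably complete)'' clashes with the paper's terminology: there, \emph{good} means weakly amenable with well-founded ultrapower, which is the weaker notion characterizing \emph{weakly} Ramsey cardinals, and the entire content of the extra clause in this theorem is that countable completeness is strictly stronger than well-foundedness of the ultrapower -- so do not call your filters ``good'' in the paper's sense. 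Second, the clause only yields $\bigcap_{n}H_n\neq\emptyset$, whereas Ramseyness needs a homogeneous set of size $\kappa$; you assert this ``in fact'' without justification. The upgrade is short but should be said: for each $\beta<\kappa$ apply the clause to $\langle H_n\setminus\beta\mid n\in\omega\rangle$ (each $H_n\setminus\beta$ lies in $M$ and satisfies $\kappa\in j(H_n\setminus\beta)$ since $\beta<\kappa=\mathrm{crit}(j)$), so that $\bigcap_n H_n$ is unbounded in $\kappa$. With those repairs the plan is sound.
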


\begin{proposition}\cite[Theorem 3.14]{MR2830415}
  Super Ramsey cardinals are strongly Ramsey limits of strong\-ly Ramsey cardinals.
\end{proposition}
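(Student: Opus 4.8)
The plan is as follows. The implication from super Ramseyness to strong Ramseyness is immediate: any $\kappa$-model $M\prec H(\kappa^+)$ witnessing super Ramseyness of $\kappa$ for a set $A\subseteq\kappa$ is in particular a $\kappa$-model witnessing strong Ramseyness for $A$. The substance is to show that a super Ramsey cardinal $\kappa$ has unboundedly many strongly Ramsey cardinals below it. So I would fix $\alpha<\kappa$ and aim to produce a strongly Ramsey $\delta$ with $\alpha<\delta<\kappa$. Applying super Ramseyness (to $A=\emptyset$, say), pick a $\kappa$-model $M\prec H(\kappa^+)$ together with a $\kappa$-powerset preserving embedding $j\colon M\to N$, which has critical point $\kappa$. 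Using Lemma \ref{wapp_equivalence} and the weakly amenable $M$-normal ultrafilter $U=\{A\in\mathcal P(\kappa)^M\mid\kappa\in j(A)\}$ derived from $j$, I may assume $N$ is transitive (replace $N$ by the transitive collapse of the well-founded ultrapower of $M$ by $U$), so that $V_\kappa\subseteq M$, $V_\kappa^N=V_\kappa$, $\mathcal P(\kappa)^M=\mathcal P(\kappa)^N$, and $\kappa$ is inaccessible in $N$.

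The key point will be that $N\models$ ``$\kappa$ is strongly Ramsey''. Granting this: in $N$, $\kappa$ is strongly Ramsey and $\alpha<\kappa<j(\kappa)$, so $N$ satisfies ``there is a strongly Ramsey cardinal strictly between $\alpha$ and $j(\kappa)$'' (note $j(\alpha)=\alpha$); pulling this statement back along $j$, $M$ satisfies ``there is a strongly Ramsey cardinal strictly between $\alpha$ and $\kappa$''. Since, for $\delta<\kappa$, the statement ``$\delta$ is strongly Ramsey'' only quantifies over objects of hereditary size below the inaccessible $\kappa$, hence over objects of $V_\kappa\subseteq M\subseteq H(\kappa^+)$, it is absolute between $M$, $H(\kappa^+)$ and $V$ (using $M\prec H(\kappa^+)$ and the usual absoluteness of $H(\kappa^+)$ for such statements). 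Therefore $V$ really has a strongly Ramsey $\delta$ with $\alpha<\delta<\kappa$. Together with $\kappa$ being strongly Ramsey, this shows $\kappa$ is a strongly Ramsey limit of strongly Ramsey cardinals.

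To prove the key point, I would show, working inside $N$, that every $B\in\mathcal P(\kappa)^N=\mathcal P(\kappa)^M$ lies in a $\kappa$-model (of $N$) carrying a $\kappa$-powerset preserving embedding. Two facts feed in. First, $U$ is countably complete: since $M$ is a $\kappa$-model, every $\omega$-sequence of elements of $U\subseteq\mathcal P(\kappa)^M\subseteq M$ already lies in $M$, and $M$-normal filters are ${<}\kappa$-complete for sequences in $M$, so $\bigcap_{n<\omega}A_n\neq\emptyset$ for any $\langle A_n\mid n<\omega\rangle$ of elements of $U$. Second, by the standard theory of iterable ultrafilters, $U$ remains weakly amenable to $N$ (and $N$-normal), so $U\cap Y\in N$ whenever $Y\in N$ has size at most $\kappa$ in $N$. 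Given $B$, I build inside $N$, by a recursion of length $\kappa$, an increasing chain of $\kappa$-models $M_\xi\prec V_{j(\kappa)}^N$ with $B\in M_0$ and with $U\cap\mathcal P(\kappa)^{M_{\bar\xi}}\in M_\xi$ for all $\bar\xi<\xi$ — possible since $j(\kappa)$ is inaccessible in $N$, $\kappa^{<\kappa}=\kappa$ in $N$, and those restrictions of $U$ lie in $N$. The union $\bar M=\bigcup_{\xi<\kappa}M_\xi$ is, by regularity of $\kappa$ in $N$, again a $\kappa$-model in $N$; it contains $B$; and it is closed under $Z\mapsto U\cap Z$, since any $Z\in\bar M$ with $Z\subseteq\mathcal P(\kappa)$ lies in some $M_\xi$, whence $U\cap Z=(U\cap\mathcal P(\kappa)^{M_\xi})\cap Z\in M_{\xi+1}$, and the general case reduces to this. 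Hence $U_{\bar M}:=U\cap\mathcal P(\kappa)^{\bar M}\in N$ is a weakly amenable $\bar M$-normal ultrafilter on $\kappa$ whose ultrapower, formed inside $N$, is well-founded (by countable completeness of $U$ any descending membership sequence would yield a descending sequence of ordinals), so by Lemma \ref{wapp_equivalence}(1) the induced embedding $\bar j\colon\bar M\to\bar N$ is $\kappa$-powerset preserving. As $B\in\mathcal P(\kappa)^N$ was arbitrary, $N\models$ ``$\kappa$ is strongly Ramsey''.

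The hard part is exactly this last claim: both the input that $U$ stays weakly amenable to the target model $N$, and the bookkeeping that produces a model $\bar M$ inside $N$ which is simultaneously a genuine $\kappa$-model (closed under ${<}\kappa$-sequences) and closed under intersecting with $U$. The tension is that $U$ itself is not an element of $N$, which is what forces the construction of $\bar M$ through the length-$\kappa$ recursion using the pieces $U\cap\mathcal P(\kappa)^{M_\xi}\in N$ rather than $U$ directly.
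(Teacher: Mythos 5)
Your skeleton is the right one (show that $N\models$ ``$\kappa$ is strongly Ramsey'' and then reflect below $\kappa$ via $j$ and $M\prec H(\kappa^+)$), and the final reflection/absoluteness step is fine. The gap is in your proof of the key claim. Your length-$\kappa$ recursion cannot literally be carried out ``inside $N$'', since identifying the sets $U\cap\mathcal P(\kappa)^{M_{\bar\xi}}$ at each step requires consulting $U$, which is not an element of $N$. More seriously, even running the recursion externally, what you get is that every \emph{proper initial segment} of the chain lies in $N$ (because $N$, being the well-founded ultrapower of a $\kappa$-model, is closed under ${<}\kappa$-sequences), but the full $\kappa$-sequence $\langle M_\xi\mid\xi<\kappa\rangle$, and hence the union $\bar M$, need not be an element of $N$ at all: $N$ has size $\kappa$ and is certainly not closed under $\kappa$-sequences --- indeed $U$ itself is a size-$\kappa$ subset of $N$ that fails to be in $N$. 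You never establish $\bar M\in N$, and without that $N$ does not see your witness. This style of chain construction does succeed for chains of length $\omega$ (or any length below $\kappa$), which is why it shows that target models think $\kappa$ is weakly Ramsey or Ramsey; it is exactly at ``strongly Ramsey'', where a genuine $\kappa$-model must be produced inside $N$, that it breaks down.

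The repair is to use the hypothesis $M\prec H(\kappa^+)$, which your proof of the key point never invokes and which is precisely what super Ramseyness provides for this purpose. Since $\kappa$ is strongly Ramsey in $V$ and all witnesses (the $\kappa$-models together with size-$\kappa$ target structures) may be taken to lie in $H(\kappa^+)$, elementarity gives $M\models$ ``$\kappa$ is strongly Ramsey''; and since this statement depends only on the subsets of $\kappa$ (equivalently, on $H(\kappa^+)$), the $\kappa$-powerset preservation of $j$ transfers it to $N$. This is the same two-line argument the paper runs for the analogous propositions that super weakly Ramsey cardinals are weakly Ramsey limits of weakly Ramsey cardinals and that $\kappa$-Ramsey cardinals are super Ramsey limits of super Ramsey cardinals; with it in place of your construction, the rest of your write-up goes through.
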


A notion that is closely related to the above, that however was not introduced in \cite{MR2830415}, is the strengthening of weak Ramseyness where we additionally require the witnessing structures $M$ to be elementary substructures of $H(\kappa^+)$, like Gitman does when strengthening strongly Ramsey to super Ramsey cardinals. We make the following definition.

\begin{definition}
  A cardinal $\kappa$ is \emph{super weakly Ramsey} if every $A\subseteq\kappa$ is contained, as an element, in a weak $\kappa$-model $M\prec H(\kappa^+)$ for which there exists a $\kappa$-powerset preserving elementary embedding $j\colon M\to N$.
\end{definition}

\begin{proposition}
  Super weakly Ramsey cardinals are weakly Ramsey limits of weakly Ramsey cardinals.
\end{proposition}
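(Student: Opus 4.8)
The plan is to mimic the structure of the proof that weakly Ramsey cardinals are weakly compact limits of ineffable cardinals, but one level up. First I would show that every super weakly Ramsey cardinal $\kappa$ is weakly Ramsey; this is immediate, since the witnessing structures $M \prec H(\kappa^+)$ with a $\kappa$-powerset preserving embedding $j\colon M \to N$ required for super weak Ramseyness are in particular weak $\kappa$-models with such an embedding, so the defining property of weak Ramseyness holds outright. The real content is that $\kappa$ is then a \emph{limit} of weakly Ramsey cardinals, and for this I would use reflection into $N$ (or into $M$) via the embedding $j$.

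The key step: fix $A \subseteq \kappa$, and let $M \prec H(\kappa^+)$ be a weak $\kappa$-model containing $A$ with a $\kappa$-powerset preserving $j\colon M \to N$, chosen moreover so that $M$ (equivalently $H(\kappa^+)$, by elementarity, once we know $\kappa$ is super weakly Ramsey) believes "$\kappa$ is super weakly Ramsey". The point is that the statement "$\kappa$ is super weakly Ramsey" is expressible over $H(\kappa^+)$: a witness for a given $A$ is a weak $\kappa$-model $M' \prec H(\kappa^+)$ together with an embedding, and by a standard argument (the embedding need only be coded by a subset of $\kappa$, and elementarity of $M'$ in $H(\kappa^+)$ is a first-order-over-$H(\kappa^+)$ assertion since $H(\kappa^+)$ is available as a parameter) this is $\Pi_1$ over $H(\kappa^+)$ — in fact the usual trick is that weak Ramseyness of $\kappa$ is witnessed by structures of size $\kappa$ and hence reflects to $M$ itself. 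So $M \models$ "$\kappa$ is super weakly Ramsey", or at least "$\kappa$ is weakly Ramsey". Applying $j$ and using that $\crit(j) = \kappa$, we get $N \models$ "$\kappa$ is weakly Ramsey" (being below $j(\kappa)$), and since $M$ and $N$ have the same subsets of $\kappa$, weak Ramseyness of $\kappa$ as computed in $N$ is correct, i.e.\ actually "$\kappa$ is weakly Ramsey in $N$" together with $\kappa$-powerset preservation gives us genuine weak Ramseyness witnesses in $N$ for every $A' \subseteq \kappa$ in $N = $ those in $M$. Then $N \models$ "there is a weakly Ramsey cardinal below $j(\kappa)$", namely $\kappa$, so by elementarity $M \models$ "there is a weakly Ramsey cardinal below $\kappa$", and since $M \prec H(\kappa^+)$ this weakly Ramsey cardinal below $\kappa$ is correctly computed (weak Ramseyness of $\delta < \kappa$ is witnessed by structures of size $\delta < \kappa$ which are elements of $H(\kappa^+)$ and whose relevant properties are absolute). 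Given an arbitrary $\beta < \kappa$, by starting with $A$ coding $\beta$ (so that $\beta \in M$) we conclude there is a weakly Ramsey cardinal in the interval $(\beta, \kappa)$, which shows $\kappa$ is a limit of weakly Ramsey cardinals.

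The main obstacle I anticipate is the absoluteness bookkeeping: making precise that "$\delta$ is weakly Ramsey" is sufficiently absolute between $H(\kappa^+)$, $M$, and $N$ for $\delta \le \kappa$. The crucial facts are (i) a weak $\delta$-model has size $\delta$, so for $\delta < \kappa$ it, its subsets of $\delta$, and the relevant embedding (codeable as a subset of $\delta$, since an embedding of a structure of size $\delta$ is) all lie in $H(\kappa^+)$ and in $M$; (ii) $\kappa$-powerset preservation of $j$ guarantees $\mathcal{P}(\kappa)^M = \mathcal{P}(\kappa)^N$, so the family of sets $A$ to be witnessed, and the witnessing embeddings' behavior on subsets of $\kappa$, transfer correctly; and (iii) well-foundedness and "$\kappa$-powerset preserving" are upward and downward absolute enough for the argument. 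I would handle (i) by the observation, already used implicitly in the excerpt (e.g.\ in the discussion after the $M$-normal filter property, and in Gitman's paper), that weak Ramseyness of an uncountable cardinal $\delta$ is a property of $H(\delta^+)$ and hence of any $H(\theta) \supseteq H(\delta^+)$; thus it reflects down to any $M \prec H(\kappa^+)$ with $\delta < \kappa$, $\delta \in M$. Once these absoluteness points are nailed down, the limit argument is exactly the $j$-reflection template above, and everything else is routine.
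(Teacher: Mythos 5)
Your argument is correct and follows essentially the same route as the paper: observe that weak Ramseyness of $\kappa$ is a statement about $H(\kappa^+)$ (witnesses may be taken to lie there), so it reflects to $M\prec H(\kappa^+)$, transfers to $N$ via $\kappa$-powerset preservation, and then elementarity of $j$ plus correctness of $M$ yields the limit statement. The extra absoluteness bookkeeping you flag is exactly what the paper's (terser) proof implicitly relies on, and your digression about $M$ satisfying full super weak Ramseyness is unnecessary but harmless.
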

\begin{proof}
  Suppose that $\kappa$ is super weakly Ramsey, and pick a weak $\kappa$-model $M\prec H(\kappa^+)$ and a $\kappa$-powerset preserving elementary embedding $j\colon M\to N$. It suffices to show that $\kappa$ is weakly Ramsey in $N$. But as we can assume that the models witnessing instances of weak Ramseyness of $\kappa$ are all elements of $H(\kappa^+)$, $M$ thinks that $\kappa$ is weakly Ramsey by elementarity, and hence $N$ thinks that $\kappa$ is weakly Ramsey for $j$ is $\kappa$-powerset preserving.
\end{proof}

As is observed in \cite{MR2830415}, since ineffable cardinals are $\mathbf\Pi^1_2$-indescribable and being Ramsey is a $\mathbf\Pi^1_2$-statement, ineffable Ramsey cardinals are limits of Ramsey cardinals. Thus in particular not every Ramsey cardinal is ineffable. However the following holds true.

\begin{proposition}\label{swr_ineffable}
  Super weakly Ramsey cardinals are ineffable.
\end{proposition}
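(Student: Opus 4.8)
The plan is to show that if $\kappa$ is super weakly Ramsey, then $\kappa$ is ineffable, by taking a witnessing structure for super weak Ramseyness applied to a given $\kappa$-list and using the embedding together with the elementary submodel property to extract the desired homogeneous set. So suppose $\kappa$ is super weakly Ramsey and let $\vec A=\langle A_\alpha\mid\alpha<\kappa\rangle$ be a $\kappa$-list, i.e.\ $A_\alpha\subseteq\alpha$ for all $\alpha<\kappa$. Since $\vec A$ can be coded by a subset of $\kappa$, fix a weak $\kappa$-model $M\prec H(\kappa^+)$ with $\vec A\in M$ and a $\kappa$-powerset preserving elementary embedding $j\colon M\to N$ with critical point $\kappa$.

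The key step is to consider $j(\vec A)=\langle A'_\alpha\mid\alpha<j(\kappa)\rangle$ in $N$ and set $A:=A'_\kappa\subseteq\kappa$. Since $j$ is $\kappa$-powerset preserving, $A\in M$. Now I would argue that the set $S=\{\alpha<\kappa\mid A\cap\alpha=A_\alpha\}$ is stationary. First, $S\in M$ (it is definable from $A$ and $\vec A$, both in $M$), and $\kappa\in j(S)$ since $j(S)=\{\alpha<j(\kappa)\mid j(A)\cap\alpha=A'_\alpha\}$ and $j(A)\cap\kappa=A=A'_\kappa$ (using crit$(j)=\kappa$, so $j(A)\cap\kappa=A$). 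To see that $S$ is stationary in $V$: if $C\subseteq\kappa$ were a club disjoint from $S$, then since $M\prec H(\kappa^+)$, we could find such a $C$ inside $M$ — more carefully, $M$ correctly computes that $S$ is nonstationary if it is, because $H(\kappa^+)$ sees a club witnessing this and $M$ is elementary in $H(\kappa^+)$; then $C\in M$ is club in $\kappa$, so $\kappa\in j(C)$ (as $j(C)$ is club in $j(\kappa)$ and $\kappa=\mathrm{crit}(j)$ is a limit point of $j(C)=\bigcup\{j(C\cap\alpha)\mid\alpha<\kappa\}\supseteq C$), contradicting $\kappa\in j(S)$ and $j(C)\cap j(S)=\emptyset$ by elementarity. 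Hence $S$ is stationary, witnessing ineffability for $\vec A$.

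The main obstacle I anticipate is the stationarity argument: one must use $M\prec H(\kappa^+)$ in an essential way, since for a mere weak $\kappa$-model $M$ (with $M^{<\kappa}\not\subseteq M$) one cannot conclude that $M$ correctly identifies nonstationary sets, and indeed weakly Ramsey cardinals need not be ineffable. The point is that $H(\kappa^+)$ contains all clubs on $\kappa$ and all the relevant witnesses, so elementarity of $M$ in $H(\kappa^+)$ transfers the (non)stationarity of $S$ faithfully into $M$, and then $\kappa$-powerset preservation plus $\kappa\in j(S)$ does the rest. A secondary point to be careful about is checking $\kappa\in j(C)$ for $C\in M$ club: this follows because $j\restriction\kappa=\id$ gives $C\subseteq j(C)$ and $\kappa$ is a limit ordinal, so $\kappa$ is a limit point of $j(C)$, and $j(C)$ is closed in $N$; combined with $\kappa\in j(S)$ this yields the contradiction. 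Everything else is routine.
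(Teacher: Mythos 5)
Your proposal is correct and follows essentially the same route as the paper's proof: set $A=j(\vec A)(\kappa)$, use $\kappa$-powerset preservation to get $A\in M$, observe $\kappa\in j(S)\cap j(C)$ for any club $C\in M$, and then use $M\prec H(\kappa^+)$ to transfer stationarity of $S$ from $M$ to $V$. The only cosmetic difference is that you phrase the stationarity step contrapositively (assuming a club disjoint from $S$ and deriving a contradiction), whereas the paper argues directly that $S$ meets every club of $M$ and hence is stationary in $M$, and then in $V$ by elementarity.
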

\begin{proof}
  Assume that $\kappa$ is super weakly Ramsey. Let $\vec A=\langle A_\alpha\mid\alpha<\kappa\rangle$ be a $\kappa$-list, and let $j\colon M\to N$ be $\kappa$-powerset preserving with $M\prec H(\kappa^+)$ and $\vec A\in M$. Let $A=j(\vec A)(\kappa)$. Then $A\in M$, since $j$ is $\kappa$-powerset preserving. Let $S=\{\alpha<\kappa\mid A\cap\alpha=A_\alpha\}\in M$. Let $C$ be a club subset of $\kappa$ in $M$. Then $\kappa\in j(S)\cap j(C)$, and thus $C\cap S\ne\emptyset$ by elementarity of $j$, showing that $S$ is a stationary subset of $\kappa$ in $M$. But since $M\prec H(\kappa^+)$, $S$ is indeed stationary, thus showing that $\kappa$ is ineffable, as desired.
\end{proof}

\section{A new hierarchy of Ramsey-like cardinals}\label{ramseylikehierarchy}

We want to introduce the following hierarchy of Ramsey-like cardinals.

\begin{definition}\label{alphaRamsey}	
  Let $\alpha\le\kappa$ be regular cardinals. $\kappa$ is \emph{$\alpha$-Ramsey} if for arbitrarily large regular cardinals $\theta$, every $A\subseteq\kappa$ is contained, as an element, in some weak $\kappa$-model $M\prec H(\theta)$ which is closed under ${<}\alpha$-sequences, and for which there exists a $\kappa$-powerset preserving elementary embedding $j\colon M\to N$.
\end{definition}

Note that, in the case when $\alpha=\kappa$, a weak $\kappa$-model closed under ${<}\kappa$-sequences is exactly a $\kappa$-model. It would have been more in the spirit of \cite{MR2830415}, and in stronger analogy to Gitman's super Ramsey cardinals, to only require the above for $\theta=\kappa^+$. However we will argue that asking for the existence of arbitrary large $\theta>\kappa$ as above results in a more natural (and strictly stronger) notion.

\begin{proposition}\label{kappasuperramsey}
  If $\kappa$ is $\kappa$-Ramsey, then $\kappa$ is a super Ramsey limit of super Ramsey cardinals.
\end{proposition}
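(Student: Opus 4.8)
The plan is to show first that $\kappa$-Ramseyness implies super Ramseyness, and then to use a reflection argument via a $\kappa$-powerset preserving embedding to get that $\kappa$ is a limit of super Ramsey cardinals. For the first part, fix $A\subseteq\kappa$. By $\kappa$-Ramseyness (applied with, say, $\theta=\kappa^+$, or any regular $\theta>\kappa$), there is a weak $\kappa$-model $M\prec H(\theta)$ which is closed under ${<}\kappa$-sequences — hence, as noted right after Definition \ref{alphaRamsey}, an actual $\kappa$-model — with $A\in M$ and a $\kappa$-powerset preserving elementary embedding $j\colon M\to N$. The only thing to check is that we may take $M\prec H(\kappa^+)$ rather than merely $M\prec H(\theta)$ for some larger $\theta$; this is where one uses $\theta>\kappa^+$ and the elementarity $M\prec H(\theta)$: inside $H(\theta)$ the structure $H(\kappa^+)$ is available, and $M\cap H(\kappa^+)$ is the relevant elementary submodel. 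More directly: since $M\prec H(\theta)$ and $H(\kappa^+)\in H(\theta)$, one argues that $M\cap H(\kappa^+)\prec H(\kappa^+)$ and that this is still a $\kappa$-model containing $A$; restricting $j$ appropriately (or rather, noting that $\mathcal P(\kappa)^{M\cap H(\kappa^+)}=\mathcal P(\kappa)^M$ since $\mathcal P(\kappa)\subseteq H(\kappa^+)$) yields a $\kappa$-powerset preserving embedding witnessing super Ramseyness for $A$. So $\kappa$ is super Ramsey.

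For the limit part, the plan is to fix an arbitrary $A\subseteq\kappa$, pick (using $\kappa$-Ramseyness) a $\kappa$-model $M\prec H(\theta)$ with $A\in M$ for a sufficiently large regular $\theta$ — large enough that $H(\kappa^+)\in M$ and $M$ "knows" the relevant facts — together with a $\kappa$-powerset preserving $j\colon M\to N$. The key point is that the statement "$\kappa$ is super Ramsey" is witnessed by structures in $H(\kappa^+)$, so it is expressible in a way that $M$ can evaluate correctly. Since $M\prec H(\theta)$ and $\kappa$ is super Ramsey in $V$, $M$ thinks $\kappa$ is super Ramsey. Because $j$ is $\kappa$-powerset preserving, $M$ and $N$ have the same subsets of $\kappa$, and hence the same potential witnessing $\kappa$-models in $H(\kappa^+)$ (which are coded by subsets of $\kappa$); therefore $N$ also thinks $\kappa$ is super Ramsey. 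But $\crit(j)=\kappa$, so in $N$ we have that $\kappa<j(\kappa)$ is super Ramsey, and by elementarity $N\models$ "there is a super Ramsey cardinal below $j(\kappa)$", in fact below $\kappa$ — wait, more carefully: $N\models$ "$\kappa$ is super Ramsey and $\kappa<j(\kappa)$", so $N\models$ "there is a super Ramsey cardinal $<j(\kappa)$", and pulling back through $j$, $M\models$ "there is a super Ramsey cardinal $<\kappa$". Since $M\prec H(\theta)$, there really is a super Ramsey cardinal below $\kappa$. As $A\subseteq\kappa$ was arbitrary and the witnessing $M$ can be taken to contain any prescribed bounded subset of $\kappa$, one gets super Ramsey cardinals cofinally below $\kappa$, i.e.\ $\kappa$ is a limit of super Ramsey cardinals.

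The main obstacle I anticipate is the bookkeeping needed to ensure that the reflection of "$\kappa$ is super Ramsey" between $M$ and $N$ genuinely goes through: one must check that super Ramseyness, although a priori a $\Sigma_2$-type assertion (quantifying over arbitrarily large $H(\theta)$ is not needed for \emph{super} Ramsey, only for $\kappa$-Ramsey, so this is fine), is correctly computed in the $\kappa$-models $M$ and $N$ — concretely, that every weak $\kappa$-model $M'\prec H(\kappa^+)$ with its $\kappa$-powerset preserving embedding can be coded by an element of $H(\kappa^+)$, hence is "visible" to $M$, and that $\kappa$-powerset preservation of $j$ transfers the existence of such witnesses from $M$ to $N$. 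This is the analogue of the argument in the proof of Proposition \ref{swr_ineffable} and the proposition on super weakly Ramsey cardinals, and should follow the same pattern: the witnesses live in $H(\kappa^+)$, $\mathcal P(\kappa)^M=\mathcal P(\kappa)^N$, and elementarity does the rest. A secondary point to get right is choosing $\theta$ large enough (and invoking Definition \ref{alphaRamsey} for "arbitrarily large $\theta$") so that $H(\kappa^+)\in M$ and $M$ correctly sees that $\kappa$ is super Ramsey in $V$.
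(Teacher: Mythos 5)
Your proposal is correct and follows essentially the same route as the paper: cut the witnessing $M\prec H(\theta)$ down to $H(\kappa^+)^M\prec H(\kappa^+)$ and restrict $j$ to get super Ramseyness, then use that super Ramseyness is a statement about $H(\kappa^+)$, hence absolute between $M$ and $N$ by $\kappa$-powerset preservation, and reflect below $\kappa$ via elementarity. The only cosmetic difference is at the very end, where the cleanest way to get cofinally many super Ramsey cardinals is to fix $\alpha<\kappa$ and pull back the statement \anf{there is a super Ramsey cardinal in $(\alpha,j(\kappa))$} through $j$, rather than varying $A$.
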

\begin{proof}
  Assume that $\kappa$ is $\kappa$-Ramsey, as witnessed by some large regular cardinal $\theta$ and $j\colon M\to N$ with $M\prec H(\theta)$. Since $\kappa^+\in M$, it follows that the restriction of $j$ to $H(\kappa^+)^M$ witnesses that $\kappa$ is super Ramsey in $V$. It thus suffices to show that $\kappa$ is super Ramsey in $N$.
  
By elementarity, $M$ thinks that $\kappa$ is super Ramsey. However, as the target structures of embeddings witnessing super Ramseyness can be assumed to be elements of $H(\kappa^+)$, this is a statement which is absolute between weak $\kappa$-models with the same subsets of $\kappa$ (and thus the same $H(\kappa^+)$) that contain $\kappa^+$ as an element, hence $\kappa$ is super Ramsey in $N$, using that $j$ is $\kappa$-powerset preserving.
\end{proof}

Unsurprisingly, the same proof yields the analogous result for $\omega$-Ramsey and super weakly Ramsey cardinals. Note that together with Proposition \ref{swr_ineffable} and the remarks preceding it, the following proposition shows in particular that Ramsey cardinals are not provably $\omega$-Ramsey.

\begin{proposition}
  If $\kappa$ is $\omega$-Ramsey, then $\kappa$ is a super weakly Ramsey limit of super weakly Ramsey cardinals. \hfill{$\Box$}
\end{proposition}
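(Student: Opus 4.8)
The plan is to mimic, essentially verbatim, the proof of Proposition \ref{kappasuperramsey}, replacing ``$\kappa$-Ramsey'' by ``$\omega$-Ramsey'', ``super Ramsey'' by ``super weakly Ramsey'', and $H(\kappa^+)^M$ by the full model $M$ (since for $\omega$-Ramsey we no longer require closure under ${<}\kappa$-sequences, but we still have $M \prec H(\theta)$ with $M$ a weak $\kappa$-model). So first I would fix $A \subseteq \kappa$, take a large regular $\theta$ witnessing $\omega$-Ramseyness, and obtain a weak $\kappa$-model $M \prec H(\theta)$ closed under $\omega$-sequences with $A \in M$ and a $\kappa$-powerset preserving elementary embedding $j \colon M \to N$.

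Next I would observe that the restriction of $j$ to $H(\kappa^+)^M$ (using $\kappa^+ \in M$) is a $\kappa$-powerset preserving elementary embedding of a weak $\kappa$-model $M' := H(\kappa^+)^M$ that is an elementary substructure of $H(\kappa^+)$ and contains $A$; this directly witnesses an instance of super weak Ramseyness of $\kappa$ in $V$. Since $A$ was arbitrary, $\kappa$ is super weakly Ramsey. (Here one uses that $M \prec H(\theta)$ implies $H(\kappa^+)^M \prec H(\kappa^+)$, exactly as in the proof of Proposition \ref{kappasuperramsey}.)

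It then remains to show $\kappa$ is a \emph{limit} of super weakly Ramsey cardinals, which by taking ultrapowers reduces to showing that $\kappa$ is super weakly Ramsey in $N$ and then applying elementarity of $j$ (which has critical point $\kappa$). By elementarity, $M$ thinks $\kappa$ is super weakly Ramsey; as the target structures of the embeddings witnessing super weak Ramseyness can be taken to lie in $H(\kappa^+)$, being super weakly Ramsey is absolute between weak $\kappa$-models that contain $\kappa^+$ and have the same subsets of $\kappa$ (hence the same $H(\kappa^+)$). Since $j$ is $\kappa$-powerset preserving, $M$ and $N$ have the same subsets of $\kappa$ and the same $H(\kappa^+)$, and both contain $\kappa^+$; therefore $\kappa$ is super weakly Ramsey in $N$. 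By elementarity of $j$, $\kappa$ is a limit of super weakly Ramsey cardinals in $N$, hence (since $V_\kappa^N = V_\kappa$, or simply since these cardinals are below $\kappa = \mathrm{crit}(j)$ and $N$ computes them correctly) a limit of super weakly Ramsey cardinals in $V$.

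I do not expect any serious obstacle — the argument is the verbatim analogue of Proposition \ref{kappasuperramsey}, which is why the paper says ``the same proof yields the analogous result''. The only point requiring a moment's care is the absoluteness claim for super weak Ramseyness between weak $\kappa$-models with the same $H(\kappa^+)$: one must note that the witnessing weak $\kappa$-models $\bar M \prec H(\kappa^+)$ and embeddings $j \colon \bar M \to \bar N$ can all be coded as elements of $H(\kappa^+)$ (the embedding has domain and range of size $\kappa$), so the assertion ``for every $A \subseteq \kappa$ there is such $\bar M, \bar N, j$'' is quantifying only over $H(\kappa^+)$ and is therefore seen identically by $M$ and $N$.
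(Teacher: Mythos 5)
Your proposal is correct and is essentially the paper's own argument: the paper proves this proposition simply by remarking that the proof of Proposition \ref{kappasuperramsey} goes through verbatim with ``super Ramsey'' replaced by ``super weakly Ramsey'' (restricting $j$ to $H(\kappa^+)^M$ to witness super weak Ramseyness in $V$, then using $\kappa$-powerset preservation and the fact that super weak Ramseyness is a statement about $H(\kappa^+)$ to transfer it to $N$ and conclude by elementarity). Your added remark about coding the witnessing models and embeddings as elements of $H(\kappa^+)$ is exactly the absoluteness point the paper relies on.
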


%
%\begin{lemma}
 % If $\kappa$ is $\omega$-Ramsey, then $\kappa$ is a weakly Ramsey limit of weakly Ramsey cardinals.
%\end{lemma}
%\begin{proof}
%  It suffices to show that if $j\colon M\to N$ witnesses $\omega$-Ramseyness of $\kappa$ for some large regular cardinal $\theta$ and for $A=\kappa$, then $\kappa$ is weakly Ramsey in $N$.
%  
% Clearly, $\kappa$ is weakly Ramsey in $V$. But by elementarity and since $\theta$ is sufficiently large and regular, this means that $M$ thinks that $\kappa$ is weakly Ramsey. However this is a statement which is absolute between weak $\kappa$-models with the same subsets of $\kappa$ (and thus the same $H(\kappa^+)$), hence $\kappa$ is weakly Ramsey in $N$.
%\end{proof}

\begin{proposition}\label{w1RR}
  If $\kappa$ is $\omega_1$-Ramsey, then $\kappa$ is a Ramsey limit of Ramsey cardinals.
\end{proposition}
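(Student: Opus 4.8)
The plan is to imitate the proof of Theorem \ref{ramseyembeddings}, that is, to show that $\omega_1$-Ramseyness of $\kappa$ produces, for every $A\subseteq\kappa$, a weak $\kappa$-model $M\ni A$ and a $\kappa$-powerset preserving embedding $j\colon M\to N$ with the extra countable-intersection property characterizing Ramsey cardinals, and then to boost this to ``limit of Ramsey cardinals'' by a reflection argument as in Propositions \ref{kappasuperramsey} and \ref{swr_ineffable}. First I would fix $A\subseteq\kappa$ and, using $\omega_1$-Ramseyness, pick a large regular $\theta$ and a weak $\kappa$-model $M\prec H(\theta)$ with $A\in M$, closed under countable sequences (i.e.\ ${}^{<\omega_1}M\subseteq M$, equivalently ${}^\omega M\subseteq M$), together with a $\kappa$-powerset preserving embedding $j\colon M\to N$. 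I claim this $j$ already witnesses the Ramsey embedding property for $A$: suppose $\langle A_n\mid n\in\omega\rangle$ is a sequence (not necessarily in $M$) with each $A_n\in M$ and $\kappa\in j(A_n)$. Since $M$ is closed under $\omega$-sequences, the sequence $\langle A_n\mid n\in\omega\rangle$ is itself an element of $M$. Now let $U=\{B\in\mathcal P(\kappa)^M\mid\kappa\in j(B)\}$ be the induced $M$-normal filter; by Lemma \ref{wapp_equivalence} it is weakly amenable, and because $M$ is closed under $\omega$-sequences, $U$ is countably complete as a filter on the Boolean algebra $\mathcal P(\kappa)^M$ (any countable subset of $U$ lies in $M$, hence its intersection is computed in $M$ and is sent by $j$ to a set containing $\kappa$, so the intersection is in $U$ and in particular nonempty). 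Thus $\bigcap_{n\in\omega}A_n\in U$, so it is nonempty, which is exactly the required property.

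Having established that $\kappa$ satisfies the embedding characterization of Theorem \ref{ramseyembeddings}, we conclude that $\kappa$ is Ramsey. It remains to see that $\kappa$ is a limit of Ramsey cardinals. For this I would argue, as in Proposition \ref{kappasuperramsey}, that it suffices to find a single $\kappa$-powerset preserving $j\colon M\to N$ with $M\prec H(\theta)$ closed under $\omega$-sequences, such that $N\models$``$\kappa$ is Ramsey''; then by elementarity $M$, hence $H(\theta)$ (since $M\prec H(\theta)$), sees Ramsey cardinals below $\kappa$ cofinally, or rather: $M$ thinks $\kappa$ is Ramsey, so $M$ thinks there is a Ramsey cardinal, and a standard reflection between $M$ and $N$ pushes a Ramsey cardinal below $\kappa$. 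Concretely: by the argument of the first paragraph applied inside the relevant models, being Ramsey is (by Theorem \ref{ramseyembeddings}, using witnessing models in $H(\kappa^+)$) a statement absolute between weak $\kappa$-models containing $\kappa^+$ and having the same subsets of $\kappa$; since $j$ is $\kappa$-powerset preserving and $\kappa^+\in M$, $N$ agrees with $M$ that $\kappa$ is Ramsey. But $M\prec H(\theta)$ and $\kappa$ is really Ramsey (first paragraph), so $M$ thinks so; by elementarity of $j$ and $\crit(j)=\kappa$, $N\models$``there is a Ramsey cardinal below $j(\kappa)$'', hence (pulling back, since $j(\kappa)>\kappa$ and $N$ computes Ramseyness of ordinals $<\kappa$ correctly as these are genuine statements about $V_{\kappa+2}$-type data captured correctly by $\kappa$-powerset preservation) there is a Ramsey cardinal in the interval, and reflecting through $M\prec H(\theta)$ gives a Ramsey cardinal below $\kappa$ in $V$; as $A$ was arbitrary this can be done with $A$ coding any prescribed ordinal below $\kappa$, so the Ramsey cardinals below $\kappa$ are cofinal, i.e.\ $\kappa$ is a Ramsey limit of Ramsey cardinals.

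The main obstacle, and the step requiring the most care, is the first paragraph's claim that closure of $M$ under $\omega$-sequences upgrades the induced filter $U$ to one with countable intersections available inside $M$, and hence delivers the nonempty-countable-intersection clause of Theorem \ref{ramseyembeddings}. One has to be careful that the $A_n$ need not a priori form a sequence in $M$ — this is precisely why the hypothesis is $\omega_1$-Ramsey (closure under ${<}\omega_1$-sequences) rather than merely $\omega$-Ramsey — and that "countably complete" here means with respect to $M$'s version of $\mathcal P(\kappa)$, which is legitimate exactly because $j$ is $\kappa$-powerset preserving and $M$ is $\omega$-closed. The reflection argument of the second paragraph is routine given Propositions \ref{kappasuperramsey} and \ref{swr_ineffable}, the only subtlety being to phrase ``$\kappa$ is Ramsey'' in a way that is absolute between $M$ and $N$; this is handled exactly as in the proof of Proposition \ref{kappasuperramsey} by noting witnessing structures for Ramseyness can be taken in $H(\kappa^+)$.
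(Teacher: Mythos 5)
Your proposal is correct and follows essentially the same route as the paper: the $\omega$-closure of the witnessing models puts any relevant countable sequence $\langle A_n\mid n\in\omega\rangle$ inside $M$, so the induced filter already satisfies the extra clause of Theorem \ref{ramseyembeddings} and $\kappa$ is Ramsey; and since Ramseyness (using transitive witnesses in $H(\kappa^+)$) is absolute under $\kappa$-powerset preservation, $N$ sees $\kappa$ as Ramsey and elementarity yields the limit statement. Your write-up merely spells out the details that the paper's terser proof leaves implicit.
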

\begin{proof}
  Suppose that $\kappa$ is $\omega_1$-Ramsey. Then $\kappa$ is Ramsey, as the witnessing models for $\omega_1$-Ramseyness are closed under countable sequences, and thus also witness the respective instances of Ramseyness.
  Pick a sufficiently large regular cardinal $\theta$, a weak $\kappa$-model $M\prec H(\theta)$ and $j\colon M\to N$ witnessing the $\omega_1$-Ramseyness of $\kappa$ for $A=\emptyset$. Note that Ramseyness of $\kappa$ is, considering only transitive weak $\kappa$-models, which suffices, a statement about $H(\kappa^+)$ and thus $\kappa$ is Ramsey in $M$. Since $j$ is $\kappa$-powerset preserving, $\kappa$ is also Ramsey in $N$, for the same reason. But this implies, by elementarity, that $\kappa$ is a limit of Ramsey cardinals, both in $M$ and in $V$.
\end{proof}

In \cite{MR1077260}, Feng introduces a hierarchy of Ramsey cardinals that he denotes as $\mathbf\Pi_\alpha$-Ramsey, for $\alpha\in\On$ (these have also been called $\alpha$-Ramsey cardinals in \cite{MR2817562}). This hierarchy is topped by the notion of what he calls a \emph{completely Ramsey} cardinal. This hierarchy is not so much of interest to us here, as already $\omega_1$-Ramsey cardinals are completely Ramsey limits of completely Ramsey cardinals. This follows from elementarity together with the proof of \cite[Theorem 3.13]{MR2830415}, observing that rather than using a $\kappa$-model $M$, using a weak $\kappa$-model $M$ that is closed under $\omega$-sequences suffices to run the argument. Note that by \cite[Theorem 4.2]{MR1077260}, completely Ramsey cardinals are $\mathbf\Pi^2_0$-indescribable, thus in particular this implies that $\omega_1$-Ramsey cardinals are $\mathbf\Pi^2_0$-indescribable as well.

%Lemma \ref{modestequivalences} 
The next lemma will show that $\alpha$-Ramseyness is a very robust notion, for any regular cardinal $\alpha\le\kappa$. This will be given additional support by a filter game characterization of $\alpha$-Ramseyness for uncountable cardinals $\alpha$ in Theorem \ref{ramseyfilterequivalence} and Corollary \ref{wramseyfilterequivalence} below.

\begin{theorem}\label{modestequivalences}
Let $\alpha\le\kappa$ be regular cardinals. The following properties are equivalent. 
\begin{enumerate-(a)} 
\item $\kappa$ is $\alpha$-Ramsey.
\item For arbitrarily large regular cardinals $\theta$, every $A\subseteq\kappa$ is contained, as an element, in a weak $\kappa$-model $M\prec H(\theta)$ that is closed under ${<}\alpha$-sequences, and for which there exists a good $M$-normal filter on $\kappa$.
\item Like (a) or (b), but $A$ can be any element of $H(\theta)$.
\item Like (a) or (b), but only for $A=\emptyset$.
%\item Like (d), but only for $\theta=(2^\kappa)^+$. 
\end{enumerate-(a)}
If $\alpha>\omega$, the following property is also equivalent to the above.
\begin{enumerate-(a)}
  \item[(e)] Like (c), but only for a single regular $\theta\ge(2^\kappa)^+$. 
\end{enumerate-(a)}
\end{theorem}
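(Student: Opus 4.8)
The cycle of implications to establish is (a) $\Leftrightarrow$ (b), (c) $\Rightarrow$ (a), (a) $\Rightarrow$ (c) via (d), and, under $\alpha>\omega$, (c) $\Rightarrow$ (e) $\Rightarrow$ (c). The equivalence of (a) and (b) is the workhorse and should be proved first. The forward direction (a) $\Rightarrow$ (b) is Lemma~\ref{wapp_equivalence}(2): given a $\kappa$-powerset preserving $j\colon M\to N$ with $M\prec H(\theta)$ closed under ${<}\alpha$-sequences, the induced filter $U=\{A\in\mathcal P(\kappa)^M\mid\kappa\in j(A)\}$ is weakly amenable and has well-founded ultrapower, i.e.\ is a good $M$-normal filter. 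For (b) $\Rightarrow$ (a) one applies Lemma~\ref{wapp_equivalence}(1): the well-founded ultrapower map induced by a good (hence weakly amenable) $M$-normal filter is $\kappa$-powerset preserving, so the same $M$ works. The only subtlety is that $M$ must be the \emph{same} weak $\kappa$-model in both formulations, and that closure under ${<}\alpha$-sequences and $M\prec H(\theta)$ are properties of $M$ alone, untouched by the passage between embeddings and filters — so this goes through verbatim.

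Next, the implications among (a), (c), (d): trivially (c) $\Rightarrow$ (a) $\Rightarrow$ (d), so it remains to show (d) $\Rightarrow$ (c). This is the standard ``absorb the parameter'' trick. Fix a target $A\in H(\theta)$ and a large regular $\theta$. Apply (d) at a sufficiently larger regular $\theta'$ to get $M'\prec H(\theta')$ closed under ${<}\alpha$-sequences with a $\kappa$-powerset preserving $j'\colon M'\to N'$; since $H(\theta)\in H(\theta')$ and $M'\prec H(\theta')$, we have $H(\theta)^{M'}:=H(\theta)\cap M'\prec H(\theta)$, and one checks this is again a weak $\kappa$-model closed under ${<}\alpha$-sequences (closure is inherited because ${<}\alpha$-sequences of its elements lie in $M'$ and have hereditary size ${<}\theta$, hence lie in $H(\theta)\cap M'$). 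The restriction of $j'$ to $H(\theta)^{M'}$ is still $\kappa$-powerset preserving (same subsets of $\kappa$). But $A$ need not lie in this model; one remedies this by first choosing $M'$ to contain $A$ — which is legitimate since (d) as stated only asks for $A=\emptyset$, but the proof of (d) $\Rightarrow$ (c) should instead run: take any $M$ witnessing (a) for $A=\emptyset$ that in addition contains $A$, noting that in Definition~\ref{alphaRamsey} we may always enlarge the witness to contain any prescribed element of $H(\theta)$ of size ${\le}\kappa$ — or, if $A$ has size $>\kappa$, replace $A$ by a code and observe $H(\theta)^{M'}$ can be arranged to see it. Cleanest is: prove (a) $\Rightarrow$ (c) directly by noting the witness $M$ in (a) can be taken with $A\in M$ simply because $H(\theta)$ has enough elementary submodels of size $\kappa$ through any given point, and $\kappa$-powerset preservation is unaffected. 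I expect the referee-level care here to be purely bookkeeping.

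Finally, assume $\alpha>\omega$ and prove (c) $\Rightarrow$ (e) $\Rightarrow$ (c). One direction is immediate: (c) holds for arbitrarily large $\theta$, in particular for some single $\theta\ge(2^\kappa)^+$, giving (e). The substantive direction is (e) $\Rightarrow$ (c): from a witness at one $\theta\ge(2^\kappa)^+$ we must produce witnesses at arbitrarily large $\theta^*$. Here is where $\theta\ge(2^\kappa)^+$ and $\alpha>\omega$ are used. Given $A\in H(\theta^*)$ and the single good $M$-normal filter $U$ on $\kappa$ with $M\prec H(\theta)$ closed under ${<}\alpha$-sequences, form the ultrapower $j\colon M\to N$, $N$ transitive (the ultrapower is well-founded, and since $M$ is closed under ${<}\alpha$-sequences and $\alpha>\omega$, $N$ will be too, or at least closed under $\omega$-sequences, which is what makes $N$ sufficiently correct). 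Inside $N$, $\kappa$ is again $\alpha$-Ramsey witnessed at the version of $\theta$ computed in $N$ — using that ``$\kappa$ carries a good $M$-normal filter for a weak $\kappa$-model $M\prec H(\theta)$ closed under ${<}\alpha$-sequences'' is, because $\theta\le(2^\kappa)$-ish and the relevant objects have size ${\le}2^\kappa$, a statement about $H((2^\kappa)^+)$ which is correctly computed by $N$ as $j$ is $\kappa$-powerset preserving (so $\mathcal P(\kappa)$, hence $(2^\kappa)^V$, sit correctly, modulo the usual care that $N$ may compute $2^\kappa$ differently — this is exactly why one wants $\theta\ge(2^\kappa)^+$, to have room). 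Then reflect: if $\kappa$ had only boundedly many $\theta$ witnessing (c) in $V$, elementarity of $j$ from $M$ (which is correct about $H(\theta)$) yields a contradiction with $\kappa$ being $\alpha$-Ramsey in $N$ above $j(\theta)$.

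\textbf{Main obstacle.} The delicate point is the last step: verifying that ``$\kappa$ is $\alpha$-Ramsey as witnessed below height $\theta$'' is genuinely absolute between $V$ and the ultrapower $N$, i.e.\ pinning down exactly which instances of the defining embedding/filter can be captured inside a structure of size roughly $2^\kappa$, and checking that $N$ computes $\mathcal P(\kappa)$, stationarity of subsets of $\kappa$, and the relevant fragment of $H(\theta)$ correctly. The hypotheses $\alpha>\omega$ (so that $N$ is closed under $\omega$-sequences and thus correct about well-foundedness and countable-length phenomena such as the diagonal-intersection/normality machinery) and $\theta\ge(2^\kappa)^+$ (so that a single witness already ``sees'' all subsets of $\kappa$ and all the filters in play) are precisely what make this absoluteness go through, and getting that bookkeeping exactly right — rather than the embedding manipulations, which are routine given Lemma~\ref{wapp_equivalence} — is the crux.
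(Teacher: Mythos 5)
Your reduction of (a) to (b) via Lemma \ref{wapp_equivalence}, and the trivial implications (c) $\Rightarrow$ (a) $\Rightarrow$ (d), (e), match the paper. But both substantive implications in your plan have genuine gaps.

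For (d) $\Rightarrow$ (c): your key move is to ``enlarge the witness to contain any prescribed $A\in H(\theta)$''. Nothing in Definition \ref{alphaRamsey} licenses this: the definition quantifies only over subsets $A$ of $\kappa$, the witnessing model comes packaged with its embedding (so you cannot simply throw $A$ into $M$ and keep $j$), and an element of $H(\theta)$ of size greater than $\kappa$ admits no code by a subset of $\kappa$ at all. This is not bookkeeping; it is the content of the implication. The paper's argument is a least-counterexample reflection: assuming (c) fails, choose $\theta'$ so large that $H(\theta')$ correctly asserts the existence of a bad pair $(\theta,A)$, apply (d) at $\theta'$ to get $M_1\prec H(\theta')$ with a $\kappa$-powerset preserving $j$, and observe that by elementarity the least bad $\theta$ and some bad $A$ lie \emph{inside} $M_1$; then $M_1\cap H(\theta)\prec H(\theta)$ together with $j\restriction H(\theta)^{M_1}$ witnesses (c) for that very pair, a contradiction. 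The point is that the counterexample $A$ is produced inside the model by elementarity, so no coding or enlarging is ever needed.

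For (e) $\Rightarrow$ (c): your plan --- form the ultrapower $N$, argue that $\kappa$ is $\alpha$-Ramsey in $N$, and ``reflect'' --- cannot produce witnesses at arbitrarily large $\theta^*$ in $V$. Since $M$ and hence $N$ have size $\kappa$ and $j$ has critical point $\kappa$, elementarity of $j$ converts statements $N$ makes about $\kappa$ into statements $M$ makes about cardinals \emph{below} $\kappa$ (this is exactly how Proposition \ref{kappasuperramsey} yields limits of super Ramsey cardinals below $\kappa$); it says nothing about witnesses for $\kappa$ itself at heights above $\theta$. The paper instead routes (e) $\Rightarrow$ (a) through the filter games (Lemma \ref{finalimplication}): a single good $M$-normal filter $U$ with $M\prec H(\theta)$, $\theta\ge(2^\kappa)^+$, already defeats every challenger strategy in $\overline{G_\alpha^{\theta}}(\kappa)$ (the judge plays $U\cap M_\gamma$ against the challenger's models, all of which lie in $M$), so $\kappa$ has the $\alpha$-filter property; the converse direction of Theorem \ref{ramseyfilterequivalence} then manufactures witnesses at every regular $\theta>\kappa$ from winning runs of the game. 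That is where $\alpha>\omega$ enters (well-foundedness of the ultrapower by the union filter), not in any closure property of $N$. You would need to either reproduce this game-theoretic detour or find a direct argument; the ultrapower reflection you sketch is the wrong mechanism.
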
 
\begin{proof}
  The equivalence of (a) and (b), as well as the equivalences of the versions of (c), (d) and (e) that refer to (a) to their respective counterparts that refer to (b) are immediate consequences of Lemma \ref{wapp_equivalence} together with \cite[Proposition 2.3]{MR2830415}. Clearly, (c) implies (a), and (a) implies each of (d) and (e). The proof of the implication from (e) to (a) %for $\alpha>\omega$
  will be postponed to Lemma \ref{finalimplication} below. We will now show that (d) implies (c).

Therefore, suppose that (d) holds, and let us suppose for a contradiction that there is some regular $\theta>\kappa$ and some $A\in H(\theta)$, such that no $M$, $N$ and $j$ witnessing (c) for $\theta$ and $A$ exist. Choose a regular cardinal $\theta'$, large enough so that this can be seen in $H(\theta')$, i.e.\ \[H(\theta')\!\models\exists\theta\!>\!\kappa\textrm{ regular}\,\exists A\!\in\!H(\theta)\,\forall M\,\forall j\,\forall N\, [(M\!\prec\!H(\theta)\textrm{ is a weak }\kappa\textrm{-model}\]\[\textrm{with }M^{<\alpha}\subseteq M\,\land\,j\colon M\to N\textrm{ is }\kappa\textrm{-powerset preserving})\,\to\,A\not\in M],\]
such that the above statement is absolute between $H(\theta')$ and $V$ for the least witness $\theta$ and any $A$ in $H(\theta)$, and such that (d) holds for $\theta'$. The absoluteness statement can easily be achieved, noting that it suffices to consider transitive models $N$ of size $\kappa$. Making use of Property (d), there is a weak $\kappa$-model $M_1\prec H(\theta')$ and a $\kappa$-powerset preserving embedding $j\colon M_1\to N_1$. By elementarity, $M_1$ models the above statement about $H(\theta')$, thus in particular we can find the least $\theta$ and some $A\in H(\theta)$ witnessing the above statement in $M_1$. Since $\theta\in M_1$, $M_1\cap H(\theta)\prec H(\theta)$, $A\in M_1\cap H(\theta)$ and $j\upharpoonright(H(\theta)^{M_1})\colon H(\theta)^{M_1}\to H(j(\theta))^{N_1}$ is $\kappa$-powerset preserving, contradicting our assumption about $\theta$ and $A$.
\end{proof}

\begin{theorem}\label{ramseyfilterequivalence} 
 Let $\alpha\le\kappa$ be regular and uncountable cardinals. Then $\kappa$ is $\alpha$-Ramsey if and only if $\kappa=\kappa^{<\kappa}$ has the $\alpha$-filter property.
\end{theorem}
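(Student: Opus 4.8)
The plan is to translate plays of $G_\alpha^\theta(\kappa)$ into the good-filter (equivalently, $\kappa$-powerset preserving embedding) characterisation of $\alpha$-Ramseyness supplied by Theorem~\ref{modestequivalences}, using the dictionary between such filters and embeddings from Lemma~\ref{wapp_equivalence}. Note first that an $\alpha$-Ramsey cardinal with $\alpha$ uncountable is in particular Ramsey (by Proposition~\ref{w1RR}), hence weakly compact, hence inaccessible, so that $\kappa=\kappa^{<\kappa}$ automatically holds and the phrase ``$\kappa=\kappa^{<\kappa}$ has the $\alpha$-filter property'' is meaningful; we assume $\kappa=\kappa^{<\kappa}$ throughout, and by Lemma~\ref{cardinalsequivalence} we may use whichever regular $\theta>\kappa$ is convenient.

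For the direction from $\alpha$-Ramsey to the $\alpha$-filter property I would argue by contradiction. Suppose the challenger has a winning strategy $\sigma$ in $G_\alpha^\theta(\kappa)$. Using Theorem~\ref{modestequivalences}(c), pick a large regular $\theta'$ with $\sigma\in H(\theta')$, a weak $\kappa$-model $M\prec H(\theta')$ closed under ${<}\alpha$-sequences with $\sigma\in M$, and a $\kappa$-powerset preserving $j\colon M\to N$; let $U=\{A\in\mathcal P(\kappa)^M\mid\kappa\in j(A)\}$, a weakly amenable $M$-normal filter on $\kappa$ by Lemma~\ref{wapp_equivalence}(2). Now play $G_\alpha^\theta(\kappa)$ with the challenger following $\sigma$ and the judge answering $M_\beta$ by $F_\beta:=U\cap\mathcal P(\kappa)^{M_\beta}$ at each stage $\beta<\alpha$. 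One checks inductively that every position lies in $M$: positions have length ${<}\alpha$ and $M$ is closed under ${<}\alpha$-sequences, so $\sigma$ keeps producing $M_\beta\in M$; since $M$ contains an internal surjection $\kappa\to M_\beta$ we get $M_\beta\subseteq M$, hence $\mathcal P(\kappa)^{M_\beta}\in M$ has size ${\le}\kappa$ in $M$ and $F_\beta\in M$ by weak amenability of $U$; moreover each $F_\beta$ is a legal move of the judge (it is a filter measuring $\mathcal P(\kappa)\cap M_\beta\subseteq\mathcal P(\kappa)^M$, and it is $\subseteq$-increasing). At the end $F_\alpha=U\cap\mathcal P(\kappa)^{M_\alpha}$ with $M_\alpha=\bigcup_{\beta<\alpha}M_\beta\subseteq M$, and $F_\alpha$ is $M_\alpha$-normal: it measures $\mathcal P(\kappa)\cap M_\alpha$, and for $\vec{X}=\langle X_\xi\mid\xi<\kappa\rangle\in M_\alpha$ with all $X_\xi\in F_\alpha$, $M$-normality of $U$ gives $\largetriangleup\vec{X}\in U$ while $\largetriangleup\vec{X}\in M_\alpha$, so $\largetriangleup\vec{X}\in F_\alpha$. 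Thus the judge wins, contradicting the choice of $\sigma$.

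For the converse, fix a regular $\theta>\kappa$ and an $A\subseteq\kappa$; by Theorem~\ref{modestequivalences}(b) it suffices (letting $\theta$ vary) to produce a weak $\kappa$-model $M\prec H(\theta)$ closed under ${<}\alpha$-sequences with $A\in M$ carrying a good $M$-normal filter. Let $\sigma$ be any challenger strategy in $G_\alpha^\theta(\kappa)$ that always plays models containing $A$. Since $\kappa$ has the $\alpha$-filter property, $\sigma$ is not winning for the challenger, so some play $\langle M_\beta,F_\beta\mid\beta<\alpha\rangle$ following $\sigma$ is won by the judge: $F_\alpha=\bigcup_{\beta<\alpha}F_\beta$ is $M_\alpha$-normal, where $M_\alpha=\bigcup_{\beta<\alpha}M_\beta\prec H(\theta)$ and $A\in M_\alpha$. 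Here $M_\alpha$ is automatically closed under ${<}\alpha$-sequences, since any sequence of length ${<}\alpha$ from $M_\alpha$ has, by regularity of $\alpha$, range inside some $M_\beta$, which being a $\kappa$-model is closed under ${<}\kappa$- and hence ${<}\alpha$-sequences. Set $U=\{B\in\mathcal P(\kappa)^{M_\alpha}\mid B\in F_\alpha\}$; this is an $M_\alpha$-normal filter, and it is weakly amenable because for $\vec{X}=\langle X_\xi\mid\xi<\kappa\rangle\in M_\alpha$, say $\vec{X}\in M_\beta$, each $X_\xi$ is already measured by $F_\beta$, so $\{\xi<\kappa\mid X_\xi\in U\}=\{\xi<\kappa\mid X_\xi\in F_\beta\cap M_\beta\}$, which lies in $M_{\beta+1}\subseteq M_\alpha$ by the game clause $\langle F_\gamma\cap M_\gamma\mid\gamma\le\beta\rangle\in M_{\beta+1}$. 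Finally $U$ induces a well-founded ultrapower of $M_\alpha$: an ill-foundedness witness $\langle g_n\mid n<\omega\rangle$ would yield sets $B_n=\{\xi\mid g_{n+1}(\xi)\in g_n(\xi)\}\in F_\alpha\cap\mathcal P(\kappa)^{M_\alpha}$, each appearing by some stage $\delta_n<\alpha$; since $\alpha$ is regular and uncountable, $\delta:=\sup_n\delta_n<\alpha$, so all $B_n\in M_\delta$ and hence $\langle B_n\mid n<\omega\rangle\in M_\delta\subseteq M_\alpha$ because $M_\delta$ is a $\kappa$-model; padding $\langle B_n\rangle$ to a $\kappa$-sequence with copies of $\kappa$ and invoking $M_\alpha$-normality of $F_\alpha$ then forces $\bigcap_n B_n$ to have size $\kappa$, contradicting that it is empty (an element of it would give an $\in$-descending $\omega$-chain in the well-founded structure $M_\alpha$). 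So $U$ is a good $M_\alpha$-normal filter and $M_\alpha$ is the desired witness.

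I expect the well-foundedness step in the converse to be the main obstacle: it is the one place where uncountability of $\alpha$ is genuinely needed (to bound a countable set of stages strictly below $\alpha$), and it must be combined with the fact that the challenger plays genuine $\kappa$-models so that the stage-$\delta$ model already absorbs the externally given $\omega$-sequence $\langle B_n\rangle$ of previously measured sets; arranging the play so these two facts interact, and checking that $M_\alpha$-normality alone upgrades to the countable completeness used here, is the delicate part. The remaining points — legality of the $U$-based judge moves, closure of $M_\alpha$ under ${<}\alpha$-sequences, and weak amenability extracted from the game clause on initial segments — are routine bookkeeping.
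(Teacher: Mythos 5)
Your proof is correct and follows essentially the same route as the paper's: in one direction the judge defeats a putative winning strategy $\sigma$ by playing $\kappa$-sized restrictions of a weakly amenable $M$-normal filter on a ${<}\alpha$-closed $M$ with $\sigma\in M$, and in the other one extracts a good $M_\alpha$-normal filter from a play won by the judge, with weak amenability coming from the clause $F_\beta\cap M_\beta\in M_{\beta+1}$ and well-foundedness from the uncountability and regularity of $\alpha$. The only cosmetic differences are that you work directly in $G_\alpha^\theta(\kappa)$ where the paper passes through $\overline{G_\alpha^\theta}(\kappa)$ and Lemma \ref{barequivalence}, and that you bound the stages $\delta_n$ below $\alpha$ rather than invoking closure of $M_\alpha$ under $\omega$-sequences outright.
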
 
\begin{proof} 
Assume first that $\kappa$ has the $\alpha$-filter property. Pick some large regular cardinal $\theta$. Let $A\subseteq\kappa$ and pick any strategy for the challenger in the game $G_\alpha^\theta(\kappa)$, such that $A$ is an element of the first model played.
%We define the following strategy for Player II. Assume that when it is his turn at stage $n$ of a run of the game $G_{\omega,\omega_1}^\prec$, the sequence of filters $\vec F_{n+1}=\langle F_i\mid i\le n\rangle$ and the sequence of substructures $\vec M_n=\langle M_i\mid i<n\rangle$ have been played. Let Player II play a $\kappa$-sized elementary substructure $M_{n+1}$ of $H_{\kappa^+}$ containing both $\vec F_{n+1}$ and $\vec M_n$ as elements, such that $\kappa+1\subseteq M_0$. For definiteness, let him play the least such substructure with respect to some fixed wellorder of $H(\kappa^+)$ in each move.
Since the challenger has no winning strategy in the game $G_\alpha^\theta(\kappa)$ by our assumption, there is a run of this game where the challenger follows the above strategy, however the judge wins. Let $\langle M_\gamma\mid\gamma<\alpha\rangle$ and $\langle F_\gamma\mid\gamma<\alpha\rangle$ be the moves made during such a run, let $F_\alpha$ and $M_\alpha$ be their unions.
By the regularity of $\alpha$, $M_\alpha$ is a weak $\kappa$-model that is closed under ${<}\alpha$-sequences. Since the judge wins, $F_\alpha$ is an $M_\alpha$-normal filter. Since $\alpha>\omega$, $F_\alpha$ induces a well-founded ultrapower of $M_\alpha$.  %, let $N=\mathrm{Ult}(M,F_\alpha)$, and let $j\colon M\rightarrow N$ be the ultrapower map. Since Player I wins, $N$ is well-founded. We may thus identify $N$ with its transitive collapse. By $M$-completeness of $F_\alpha$, we obtain that $\mathrm{crit}\,j=\kappa$. Moreover, being an increasing $\alpha$-union of $\kappa$-models, $M$ is closed under ${<}\alpha$-sequences.
It remains to show that $F_\alpha$ is weakly amenable for $M_\alpha$. Therefore, assume that $X\subseteq\mathcal P(\kappa)$ is of size at most $\kappa$ in $M_\alpha$. By the definition of $M_\alpha$, this is the case already in $M_\gamma$, for some $\gamma<\alpha$. But since $F_\gamma\cap M_\gamma\in M_{\gamma+1}$, $F_\alpha\cap X=F_\gamma\cap X\in M_{\gamma+1}\subseteq M_\alpha$, showing that $F_\alpha$ is weakly amenable and hence good, i.e.\ $\kappa$ is $\alpha$-Ramsey.
%For each $n<\omega$, since $M_{n+1}$ contains $M_n$ as a subset and $F_{n+1}$ as an element, $M_{n+1}$ contains $\mathcal P(\kappa)\cap\mathrm{Ult}(M_n,F_{n+1})$ as a subset by {\L}os' theorem. Moreover by {\L}os' theorem, for each $n<\omega$, $\mathcal P(\kappa)\cap M_n\subseteq\mathrm{Ult}(M_n,F_{n+1})$. Again by {\L}os' theorem, we also have that $\mathcal P(\kappa)\cap N=\bigcup_{n\in\omega}\mathcal P(\kappa)\cap\mathrm{Ult}(M_n,F_{n+1})$. Therefore $\mathcal P(\kappa)\cap M=\mathcal P(\kappa)\cap N$ and hence $j\colon M\to N$ is $\kappa$-powerset preserving. Since $M\prec H(\kappa^+)$ is a weak $\kappa$-model and $A\in M$, $j$ witnesses that $\kappa$ is modestly Ramsey, as desired.

Now assume that $\kappa$ is $\alpha$-Ramsey and let $\theta=(2^\kappa)^+$.
Towards a contradiction, suppose that the challenger has a winning strategy $\sigma$ in $\overline{G_\alpha^\theta}(\kappa)$. Then $\sigma\in H(\theta)$. Since $\kappa$ is $\alpha$-Ramsey, there is a weak $\kappa$-model $M\prec H(\theta)$ that is closed under ${<}\alpha$-sequences, with $\sigma\in M$, and a good $M$-normal filter $U$ on $\kappa$. 
%Suppose that $\pi\colon M\rightarrow \bar{M}$ is the transitive collapse of $M$, and that $\pi(\sigma)=\bar{\sigma}$. Since $\pi(\kappa)=\kappa$, $\bar{\sigma}$ is a partial strategy for Player II in $\overline{G_{\alpha,\kappa}}$. 
We define a partial strategy $\tau$ for the judge in $\overline{G_\alpha^\theta}(\kappa)$ as follows. If the challenger played $M_\gamma\prec H(\theta)$, with $M_\gamma\in M$, in his last move, then the judge answers by playing $F_\gamma=U\cap M_\gamma$. Note that $F_\gamma\in M$, since $U$ is weakly $M$-amenable.
Since $\sigma\in M$, the above together with closure of $M$ under ${<}\alpha$-sequences implies that the run of $\sigma$ against $\tau$ has length $\alpha$, since all its initial segments of length less than $\alpha$ are elements of $M$. Note that $F_\alpha$ is an $M_\alpha$-normal filter, that gives rise to a well-founded ultrapower of $M_\alpha$. Thus using her (partial) strategy $\tau$, the judge wins against $\sigma$, contradicting the assumption that $\sigma$ is a winning strategy for the challenger in $\overline{G_\alpha^\theta}(\kappa)$. By Lemma \ref{barequivalence} and Lemma \ref{cardinalsequivalence}, this implies that $\kappa$ has the $\alpha$-filter property.
\end{proof} 

To obtain a version of Theorem \ref{ramseyfilterequivalence} for $\omega$-Ramsey cardinals, we make the following, somewhat ad hoc definitions.

\begin{definition}
  Suppose $\kappa=\kappa^{<\kappa}$ is an uncountable cardinal, $\theta>\kappa$ is a regular cardinal, and $\gamma\le\kappa^+$.  We define the \emph{well-founded filter games} $wfG_\gamma^\theta(\kappa)$ just like the filter games $G_\gamma^\theta(\kappa)$ in Definition \ref{filtergamesdefinition}, however for the judge to win, we additionally require that the ultrapower of $M_\gamma$ by $F_\gamma$ be well-founded. \footnote{Note that in case $\gamma$ has uncountable cofinality, $M_\gamma$ will always be closed under countable sequences and thus this extra condition becomes vacuous.} We say that $\kappa$ has the \emph{well-founded $(\gamma,\theta)$-filter property} if the challenger does not have a winning strategy in $wfG_\gamma^\theta(\kappa)$. We say that $\kappa$ has the \emph{well-founded $\gamma$-filter property} iff it has the well-founded $(\gamma,\theta)$-filter property for every regular $\theta>\kappa$. \footnote{Very recently, Victoria Gitman has shown that the well-founded $\omega$-filter property is strictly stronger than the $\omega$-filter property -- see Lemma \ref{victoriaomegafilterproperty} below.}
\end{definition}

The proof of Theorem \ref{ramseyfilterequivalence} also shows the following, where in the forward direction, well-foundedness of the ultrapower of $M_\omega$ by $F_\omega$ now follows from the well-founded $\omega$-filter property rather than the (now missing) closure properties of $M_\omega$.

\begin{corollary}\label{wramseyfilterequivalence}
  $\kappa$ is $\omega$-Ramsey iff $\kappa=\kappa^{<\kappa}$ has the well-founded $\omega$-filter property. \hfill{$\Box$}
\end{corollary}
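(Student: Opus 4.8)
The plan is to rerun the proof of Theorem \ref{ramseyfilterequivalence} with $\alpha=\omega$, replacing the games $G_\omega^\theta(\kappa)$ throughout by the well-founded games $wfG_\omega^\theta(\kappa)$, and to check that the two places where the argument for uncountable $\alpha$ used closure of the relevant model under ${<}\alpha$-sequences are now handled by well-foundedness being built into the winning condition. First I would note that the auxiliary Lemmas \ref{barequivalence} and \ref{cardinalsequivalence} go through verbatim for the well-founded games, and hence so do their barred analogues: in each strategy translation used there, the model $M_\gamma$ played by the challenger and the restriction $F_\gamma\cap M_\gamma$ of the judge's filter are left unchanged, so the extra requirement that the ultrapower of $M_\gamma$ by $F_\gamma$ be well-founded is preserved. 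Thus it suffices to work with a single regular $\theta$ and with the barred variant $\overline{wfG_\omega^\theta}(\kappa)$ when convenient.

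For the direction from the well-founded $\omega$-filter property to $\omega$-Ramseyness, fix an arbitrarily large regular $\theta$ and $A\subseteq\kappa$, and pick any challenger strategy in $wfG_\omega^\theta(\kappa)$ whose first move is a model containing $A$. Since the challenger has no winning strategy, there is a run following this strategy that the judge wins; let $\langle M_n\mid n<\omega\rangle$, $\langle F_n\mid n<\omega\rangle$ be the moves and $M_\omega$, $F_\omega$ their unions. Then $M_\omega$ is a weak $\kappa$-model with $A\in M_\omega$; it is automatically closed under finite sequences, but — and this is the point — need not be closed under $\omega$-sequences. Because the judge wins the \emph{well-founded} game, $F_\omega$ is an $M_\omega$-normal filter \emph{and} the ultrapower of $M_\omega$ by $F_\omega$ is well-founded, and this is exactly where, for $\alpha=\omega$, the winning condition of $wfG_\omega^\theta(\kappa)$ takes over the role played by ${<}\alpha$-closure of $M_\alpha$ in Theorem \ref{ramseyfilterequivalence}. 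Weak amenability of $F_\omega$ for $M_\omega$ follows as there: if $X\in M_\omega$ has size at most $\kappa$ in $M_\omega$, then $X\in M_n$ for some $n$, and $F_\omega\cap X=F_n\cap X\in M_{n+1}\subseteq M_\omega$ since $F_n\cap M_n\in M_{n+1}$. So $F_\omega$ is a good $M_\omega$-normal filter, and by Lemma \ref{wapp_equivalence} together with \cite[Proposition 2.3]{MR2830415} it induces a $\kappa$-powerset preserving embedding $j\colon M_\omega\to N$; as $\theta$ was arbitrarily large, $\kappa$ is $\omega$-Ramsey.

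For the converse, assume $\kappa$ is $\omega$-Ramsey and fix a regular $\theta$ (it is enough to treat $\theta=(2^\kappa)^+$ and then invoke the well-founded analogues of Lemmas \ref{barequivalence} and \ref{cardinalsequivalence}); suppose toward a contradiction that the challenger has a winning strategy $\sigma$ in $\overline{wfG_\omega^\theta}(\kappa)$, so $\sigma\in H(\theta)$. Using the equivalence of $\omega$-Ramseyness with clause (b) of Theorem \ref{modestequivalences}, fix a weak $\kappa$-model $M\prec H(\theta)$ with $\sigma\in M$ and a good $M$-normal filter $U$ on $\kappa$. Let $\tau$ be the partial judge strategy answering a challenger move $M_n\in M$ by $F_n=U\cap M_n$, which lies in $M$ by weak amenability of $U$. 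Since $\sigma\in M$ and every finite initial segment of a play of $\sigma$ against $\tau$ is a finite sequence of elements of $M$, hence an element of $M$, the run against $\tau$ has length $\omega$; let $M_\omega$, $F_\omega=U\cap M_\omega$ be the resulting unions. Then $F_\omega$ measures $\mathcal P(\kappa)\cap M_\omega$, and if $\vec X=\langle X_\alpha\mid\alpha<\kappa\rangle\in M_\omega$ is a sequence of elements of $F_\omega$, then $\vec X\in M_n$ for some $n$, so $\largetriangleup\vec X\in M_n$ and $\largetriangleup\vec X\in U$ by $M$-normality of $U$, whence $\largetriangleup\vec X\in U\cap M_n=F_n\subseteq F_\omega$; so $F_\omega$ is $M_\omega$-normal. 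Finally, since $F_\omega=U\cap M_\omega$ and $M_\omega\subseteq M$, the map $[f]_{F_\omega}\mapsto[f]_U$ is a well-defined order-embedding of the ultrapower of $M_\omega$ by $F_\omega$ into the ultrapower of $M$ by $U$, the latter well-founded because $U$ is good; hence the ultrapower of $M_\omega$ by $F_\omega$ is well-founded and the judge meets the extra winning condition. So the judge defeats $\sigma$, a contradiction, and $\kappa$ has the well-founded $\omega$-filter property.

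The main obstacle is this last point: one must verify that goodness of $U$, i.e.\ well-foundedness of the ultrapower of $M$ by $U$, transfers to well-foundedness of the ultrapower of $M_\omega$ by $F_\omega$, even though $M_\omega$ is not a $\kappa$-model and need not belong to $M$. This is precisely what the factor embedding $[f]_{F_\omega}\mapsto[f]_U$ delivers, using that $F_\omega=U\cap M_\omega$ and $M_\omega\subseteq M$, so that $F_\omega$ measures $\{\alpha<\kappa\mid f(\alpha)\in g(\alpha)\}$ for all $f,g\in M_\omega$. Dually, in the forward direction one must resist arguing well-foundedness from $\omega$-closure of $M_\omega$, which genuinely fails here; carrying this single piece of information through the limit stage $\omega$ is exactly what the well-founded games were introduced to do.
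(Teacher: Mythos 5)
Your proposal is correct and follows essentially the same route as the paper, which simply observes that the proof of Theorem \ref{ramseyfilterequivalence} goes through for $\alpha=\omega$ once the games are replaced by their well-founded variants, with the winning condition supplying the well-foundedness that ${<}\alpha$-closure provided before. Your explicit factor-embedding argument $[f]_{F_\omega}\mapsto[f]_U$ in the converse direction is exactly the detail the paper leaves implicit (and note that to place $\sigma$ in $M$ you should invoke clause (c), not (b), of Theorem \ref{modestequivalences}).
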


We can now use the above to fill in the missing part of the proof of Theorem \ref{modestequivalences}.

\begin{lemma}\label{finalimplication}
  For regular cardinals $\alpha\le\kappa$, Property (e) implies Property (a) in the statement of Theorem \ref{modestequivalences}.% Let $\alpha<\kappa$ be a regular cardinal. Assume that for some regular $\theta\ge(2^\kappa)^+$, every $A\subseteq\kappa$ is contained, as an element, in a weak $\kappa$-model $M\prec H(\theta)$ which is closed under ${<}\alpha$-sequences, for which there exists a $\kappa$-powerset preserving elementary embedding $j\colon\bar M\to N$, where $\bar M$ denotes the transitive collapse of $M$. Then $\kappa$ is $\alpha$-Ramsey.
\end{lemma}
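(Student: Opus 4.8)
The plan is to show that Property (e) — the existence of embeddings witnessed on a single $\theta \ge (2^\kappa)^+$ — already forces the full $\alpha$-filter property, and then invoke Theorem \ref{ramseyfilterequivalence} (for $\alpha > \omega$) to conclude $\alpha$-Ramseyness, which is Property (a). So the real content is: \emph{if Property (e) holds for $\kappa$ (with $\alpha > \omega$ regular), then the challenger has no winning strategy in $G_\alpha^\theta(\kappa)$ for $\theta = (2^\kappa)^+$.} This is essentially the second half of the proof of Theorem \ref{ramseyfilterequivalence}, but we must be careful: there, the model $M$ witnessing $\alpha$-Ramseyness was obtained from the hypothesis "$\kappa$ is $\alpha$-Ramsey", i.e. from Property (a)/(b); here we only have Property (e), so $M$ is only known to contain a prescribed $A \in H(\theta)$, not to be available for arbitrarily large $\theta$. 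But since we are aiming at the single fixed $\theta = (2^\kappa)^+$, that is exactly what (e) provides.

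First I would fix $\theta = (2^\kappa)^+$ and, arguing towards a contradiction, suppose the challenger has a winning strategy $\sigma$ in $\overline{G_\alpha^\theta}(\kappa)$ (using Lemma \ref{barequivalence} to pass to the barred game). Then $\sigma \in H(\theta)$, since $\sigma$ is a function from $H(\theta)$-sized data to $\kappa$-models, and its transitive closure has size at most $2^\kappa < \theta$. Now apply Property (e) with $A = \sigma$: there is a weak $\kappa$-model $M \prec H(\theta)$, closed under ${<}\alpha$-sequences, with $\sigma \in M$, together with a $\kappa$-powerset preserving embedding $j \colon M \to N$; equivalently, by Lemma \ref{wapp_equivalence}, a good (weakly amenable, well-founded-generating) $M$-normal ultrafilter $U$ on $\kappa$. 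Next, define the partial judge-strategy $\tau$ exactly as in the proof of Theorem \ref{ramseyfilterequivalence}: when the challenger plays $M_\gamma \prec H(\theta)$ with $M_\gamma \in M$, the judge responds with $F_\gamma = U \cap M_\gamma$, which lies in $M$ by weak amenability of $U$. Since $\sigma \in M$ and $M$ is closed under ${<}\alpha$-sequences, an induction on $\gamma < \alpha$ shows every proper initial segment of the run of $\sigma$ against $\tau$ is an element of $M$ (here one uses $\alpha > \omega$ only insofar as it is already built into the hypothesis; for limit $\gamma < \alpha$ of cofinality $\le$ the closure degree, closure of $M$ gives the sequence back), so the challenger never gets stuck and the run has full length $\alpha$. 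Then $F_\alpha = \bigcup_{\gamma<\alpha} F_\gamma = U \cap M_\alpha$ is $M_\alpha$-normal and, since $U$ generates a well-founded ultrapower of $M$ and $M_\alpha \subseteq M$ (as $M_\alpha = \bigcup_{\gamma<\alpha} M_\gamma$ with each $M_\gamma \in M$ and $\alpha$-closure of $M$), it generates a well-founded ultrapower of $M_\alpha$. Hence the judge wins, contradicting that $\sigma$ was winning for the challenger.

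This establishes that $\kappa$ has the well-founded $(\alpha,\theta)$-filter property for $\theta = (2^\kappa)^+$; by Lemma \ref{barequivalence} and Lemma \ref{cardinalsequivalence} it has the $\alpha$-filter property outright, and then Theorem \ref{ramseyfilterequivalence} yields that $\kappa$ is $\alpha$-Ramsey, i.e. Property (a). The main obstacle I anticipate is the bookkeeping in the claim that the run of $\sigma$ against $\tau$ genuinely reaches length $\alpha$: one must verify that at every stage $\gamma < \alpha$ the judge's move $F_\gamma = U \cap M_\gamma$ is a \emph{legal} move in $\overline{G_\alpha^\theta}(\kappa)$ — in particular that $F_\gamma \supseteq \bigcup_{\beta<\gamma} F_\beta$ (immediate, as all $F_\beta = U \cap M_\beta$ with $M_\beta$ increasing) and that $\langle M_\beta \mid \beta<\gamma\rangle, \langle F_\beta \cap M_\beta \mid \beta<\gamma\rangle \in M_\gamma$, which follows since these sequences are computed by $\sigma$ and $\tau$ inside $M$ from data in $M$ and then land in $M_\gamma \prec H(\theta)$ by elementarity — and that the whole run-sequence of length $<\alpha$ is recoverable inside $M$ via its ${<}\alpha$-closure so that $\sigma$'s next move is defined. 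Once this is checked, everything else is a direct transcription of the argument already given for Theorem \ref{ramseyfilterequivalence}, with Property (e) playing the role that $\alpha$-Ramseyness played there.
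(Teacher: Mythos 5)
Your argument is correct and is essentially the paper's own proof written out in full: the paper simply remarks that the direction ``$\alpha$-Ramsey implies the $\alpha$-filter property'' in Theorem \ref{ramseyfilterequivalence} only ever used the witnessing model and embedding for a single regular $\theta\ge(2^\kappa)^+$ (which is exactly what Property (e) supplies), and then applies the converse direction of that theorem together with Lemmas \ref{barequivalence} and \ref{cardinalsequivalence}. Your transcription of that game argument, including the legality checks for the judge's moves $F_\gamma=U\cap M_\gamma$, matches the paper's reasoning step for step.
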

\begin{proof}
  Note that when showing that $\kappa$ being $\alpha$-Ramsey implies the $\alpha$-filter property in the proof of Theorem \ref{ramseyfilterequivalence}, we only used the case when $\theta=(2^\kappa)^+$, and in fact it would have worked for any regular $\theta\ge(2^\kappa)^+$ in the very same way. Thus our assumption implies the $\alpha$-filter property. But then again by Lemma \ref{ramseyfilterequivalence}, $\kappa$ is $\alpha$-Ramsey, as desired.
\end{proof}

We think that the above results in particular show $\kappa$-Ramseyness to be a more natural large cardinal notion than the closely related concept of super Ramseyness defined by Gitman - super Ramseyness corresponds to Property (e) for $\theta=\kappa^+$ in Theorem \ref{modestequivalences} above, while what may seem to be a hierarchy for different $\theta\ge(2^\kappa)^+$ in Property (e) of Theorem \ref{modestequivalences} actually collapses to the single notion of $\kappa$-Ramseyness.

\medskip

While $\alpha$-Ramseyness for singular cardinals $\alpha$ is not a very useful property, as it implies $\alpha^+$-Ramseyness (since weak $\kappa$-models closed under ${<}\alpha$-sequences are also closed under ${<}\alpha^+$-sequences), the $\alpha$-filter property makes perfect sense also when $\alpha$ is singular. We may thus define, for singular cardinals $\alpha$, that $\kappa$ is $\alpha$-Ramsey if it has the $\alpha$-filter property. For the cases when $\alpha$ has cofinality $\omega$, we may rather want to consider the well-founded $\alpha$-filter property instead.

\medskip

We now want to show that the $\alpha$-Ramsey cardinals (including those we just defined for singular cardinals $\alpha$) form a strict hierarchy for cardinals $\omega\le\alpha\le\kappa$, and moreover that $\kappa$-Ramsey cardinals are strictly weaker than measurable cardinals.

\begin{theorem}\label{filterprophierarchy}
  If $\omega\le\alpha_0<\alpha_1\le\kappa$, both $\alpha_0$ and $\alpha_1$ are cardinals, and $\kappa$ is $\alpha_1$-Ramsey, then there is a proper class of $\alpha_0$-Ramsey cardinals in $\VV_\kappa$. If $\alpha_0$ is regular, then $\kappa$ is a limit of $\alpha_0$-Ramsey cardinals.
\end{theorem}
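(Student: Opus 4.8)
The plan is to run the elementary-embedding reflection argument already used for Propositions \ref{w1RR} and \ref{kappasuperramsey}. Both conclusions follow once we show that for every $\delta<\kappa$ there is an $\alpha_0$-Ramsey cardinal $\beta$ with $\delta<\beta<\kappa$. So fix such a $\delta$ and choose a regular cardinal $\theta$ large enough that, for every cardinal $\mu\le\kappa$, whether $\mu$ is $\alpha_0$-Ramsey is correctly decided in $H(\theta)$: for $\alpha_0>\omega$ this is possible since, by Theorem \ref{modestequivalences}(e), $\alpha_0$-Ramseyness of $\mu$ is witnessed over $H((2^\mu)^+)$; for $\alpha_0=\omega$ use Corollary \ref{wramseyfilterequivalence} together with Lemma \ref{cardinalsequivalence} in the same way; for singular $\alpha_0$ use the ($\alpha_0$-filter-property) definition directly. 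Since $\kappa$ is $\alpha_1$-Ramsey, Theorem \ref{modestequivalences} and Lemma \ref{wapp_equivalence} provide a weak $\kappa$-model $M\prec H(\theta)$ closed under ${<}\alpha_1$-sequences -- hence, as $\alpha_0<\alpha_1$, also under ${<}\alpha_0$-sequences -- with $\delta\in M$, together with a $\kappa$-powerset preserving elementary embedding $j\colon M\to N$ with critical point $\kappa$ and $N$ transitive.

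The crux is the claim that $N$ satisfies that $\kappa$ is $\alpha_0$-Ramsey. Granting it, the reflection is immediate: since $\delta<\kappa<j(\kappa)$ and $j(\delta)=\delta$ (because $\delta<\kappa=\operatorname{crit}(j)$), the cardinal $\kappa$ witnesses $N\models\exists\beta\,\big(j(\delta)<\beta<j(\kappa)\ \wedge\ \beta\text{ is }\alpha_0\text{-Ramsey}\big)$, so by elementarity of $j$ we get $M\models\exists\beta\,\big(\delta<\beta<\kappa\ \wedge\ \beta\text{ is }\alpha_0\text{-Ramsey}\big)$; fixing such a $\beta\in M$, it is genuinely $\alpha_0$-Ramsey because $M\prec H(\theta)$ and, for $\beta<\kappa<\theta$, $\alpha_0$-Ramseyness of $\beta$ is absolute between $M$, $H(\theta)$ and $V$ by the choice of $\theta$. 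As $\delta<\kappa$ was arbitrary, this gives a proper class of $\alpha_0$-Ramsey cardinals in $\VV_\kappa$, and when $\alpha_0$ is regular it shows that $\kappa$ is a limit of $\alpha_0$-Ramsey cardinals.

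It remains to prove the claim. First, $\alpha_1$-Ramseyness of $\kappa$ implies $\alpha_0$-Ramseyness of $\kappa$: for regular $\alpha_0$ because a weak $\kappa$-model closed under ${<}\alpha_1$-sequences is closed under ${<}\alpha_0$-sequences, and for singular $\alpha_0$ via Theorem \ref{ramseyfilterequivalence} together with the monotonicity of the $\gamma$-filter property in $\gamma$ (with a small extra check for the well-founded variant when $\operatorname{cf}(\alpha_0)=\omega$, using that the witnessing models are closed under $\omega$-sequences). As this implication is a theorem of $\ZFC$, it holds in $H(\theta)$, hence in $M$ by elementarity; so $M$ satisfies that $\kappa$ is $\alpha_0$-Ramsey. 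Now we transfer this to $N$: since $j$ is $\kappa$-powerset preserving, $M$ and $N$ have the same subsets of $\kappa$, hence the same $H(\kappa^+)$, and the point is that -- because $\alpha_0<\kappa$ -- the statement ``$\kappa$ is $\alpha_0$-Ramsey'' is absolute between weak $\kappa$-models that contain $\kappa^+$ and share $H(\kappa^+)$. Indeed, combining Theorem \ref{modestequivalences}, Theorem \ref{ramseyfilterequivalence} (respectively Corollary \ref{wramseyfilterequivalence}) and Lemma \ref{wapp_equivalence}, $\alpha_0$-Ramseyness of $\kappa$ reduces to an assertion over $H(\kappa^+)$: the witnessing weak $\kappa$-models $M^\ast$, their closure under ${<}\alpha_0$-sequences (here $\alpha_0<\kappa$ is used), and -- restricting a good $M^\ast$-normal filter to $\mathcal P(\kappa)\cap M^\ast$ and passing to its well-founded, hence size-${\le}\kappa$, transitive ultrapower -- the associated $\kappa$-powerset preserving embeddings are all coded inside $H(\kappa^+)$, while the elementarity clause ``$M^\ast\prec H((2^\kappa)^+)$'' is recovered from the fact that such an $M^\ast$ chosen inside $M$ is, by $M\prec H(\theta)$, a genuine elementary submodel. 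Establishing this $H(\kappa^+)$-absoluteness of $\alpha_0$-Ramseyness for $\alpha_0<\kappa$ is exactly the main obstacle in making the proof rigorous, and it is precisely where the hypothesis $\alpha_0<\alpha_1\le\kappa$ is essential: without $\alpha_0<\kappa$ the argument must break, since the least $\kappa$-Ramsey cardinal is not a limit of $\kappa$-Ramsey cardinals. Granting the absoluteness, $N$ satisfies that $\kappa$ is $\alpha_0$-Ramsey, completing the proof.
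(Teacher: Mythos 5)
Your overall architecture (get a $\kappa$-powerset preserving $j\colon M\to N$ from $\alpha_1$-Ramseyness, show $N$ thinks $\kappa$ is $\alpha_0$-Ramsey, reflect) matches the paper's, and your final reflection step is fine. The gap is exactly at the point you yourself flag as the crux: the claim that ``$\kappa$ is $\alpha_0$-Ramsey'' is absolute between weak $\kappa$-models containing $\kappa^+$ that share $H(\kappa^+)$. This is not merely unproved in your write-up; it is false as a general principle, and your own argument shows why. Nothing in your transfer from $M$ to $N$ uses $\alpha_0<\alpha_1$ (only $\alpha_0<\kappa$), so if the absoluteness held, the identical argument with $\alpha_0=\alpha_1=\omega_1<\kappa$ would show that every $\omega_1$-Ramsey cardinal is a limit of $\omega_1$-Ramsey cardinals, contradicting the existence of a least one. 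The underlying obstruction is that $\alpha_0$-Ramseyness, unlike super (weak) Ramseyness, requires the witnessing models to be elementary in $H(\theta)$ for some $\theta\ge(2^\kappa)^+$ (Theorem \ref{modestequivalences} (e) does not let you go below this bound), and that elementarity is not coded in $H(\kappa^+)$: the paper stresses that $\omega$-Ramsey is strictly stronger than super weakly Ramsey and $\kappa$-Ramsey strictly stronger than super Ramsey, which are precisely the $H(\kappa^+)$-expressible variants. Your parenthetical recovery of the elementarity clause only recovers elementarity in $V$'s $H(\theta)$ from $M$'s point of view; to verify $\alpha_0$-Ramseyness inside $N$ you need models elementary in $H(\theta')^N$, and $H((2^\kappa)^+)^N\ne H((2^\kappa)^+)^M$ in general.

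The paper's proof avoids any such absoluteness and instead verifies the well-founded $(\alpha_0,\nu)$-filter property directly inside $N$: take $j$ to be an ultrapower embedding, so that $N$ (transitivized) inherits closure under ${<}\alpha_1$-sequences from $M$; let $F$ be the weakly amenable $M$-normal filter derived from $j$, which by $\kappa$-powerset preservation is also $N$-normal and weakly amenable for $N$; then, against any strategy of the challenger in $N$, let the judge answer each model $X$ with $F\cap X\in N$. Closure of $N$ under ${<}\alpha_1$-sequences guarantees that this run of length $\alpha_0<\alpha_1$ is an element of $N$ --- this is where the hypothesis $\alpha_0<\alpha_1$ is genuinely used --- and $\omega$-closure of $N$ together with $N$-normality of $F$ rules out ill-foundedness of the resulting ultrapower. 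You would need to supply this (or an equivalent) argument in place of the absoluteness claim to close the gap.
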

\begin{proof}
  Pick a regular cardinal $\theta>\kappa$. We may assume that $\alpha_1$ is regular, for we may replace it with a regular $\bar{\alpha_1}$ that lies strictly between $\alpha_0$ and $\alpha_1$ otherwise. Using that $\kappa$ is $\alpha_1$-Ramsey, pick an ultrapower embedding $j\colon M\to N$ where $M\prec H(\theta)$ is a weak $\kappa$-model that is closed under ${<}\alpha_1$-sequences, and $j$ is $\kappa$-powerset preserving. We may also assume that $N$ is transitive, since we can replace it by its transitive collapse in case it is not. Using that $j$ is an ultrapower embedding, it follows by standard arguments that $N$ is closed under ${<}\alpha_1$-sequences as well. Moreover, $j$ induces a weakly amenable $M$-normal filter $F$, by Lemma \ref{wapp_equivalence}, (2). By $\kappa$-powerset preservation of $j$, $F$ is also weakly amenable for $N$ and $N$-normal. Let $\nu>\kappa$ be a regular cardinal of $N$. We show that $\kappa$ has the well-founded $(\alpha_0,\nu)$-filter property in $N$.

Suppose for a contradiction that the challenger has a winning strategy for $wfG_{\alpha_0}^\nu(\kappa)$ in $N$, and let him play according to this strategy. Whenever he plays a $\kappa$-model $X\prec H(\nu)$, let the judge answer by playing $F\cap X\in N$. By closure of $N$ under ${<}\alpha_1$-sequences, this yields a run of the game $wfG_{\alpha_0}^\nu(\kappa)$ that is an element of $N$. Moreover, the judge wins this run: If $Y$ denotes the union of the models played by the challenger, potential ill-foundedness of the ultrapower of $Y$ by $F\cap Y$ would be witnessed by a sequence $\langle f_i\mid i<\omega\rangle$ of functions $f_i\colon\kappa\to Y$ in $Y$, for which $F_i=\{\alpha<\kappa\mid f_{i+1}(\alpha)\in f_i(\alpha)\}\in F$ for every $i<\omega$. Now by transitivity of $N$ and since $N$ is closed under $\omega$-sequences, $\langle f_i\mid i<\omega\rangle\in N$. But then since $F$ is $N$-normal, $\bigcap_{i<\omega}F_i\in F$, yielding a decreasing $\omega$-sequence of ordinals in $N$, a contradiction. This means that the ultrapower of $Y$ by $F\cap Y$ is well-founded, i.e.\ the judge wins the above run of the game $wfG_{\alpha_0}^\nu(\kappa)$. However this contradicts that the challenger followed his winning strategy.

The first statement of the theorem now follows by elementarity together with Theorem \ref{ramseyfilterequivalence}, and its second statement follows immediately from the regularity of $\alpha_0$ together with the relevant definitions.
\end{proof}

\begin{proposition} 
If $\kappa$ is measurable, then it is a limit of regular cardinals $\alpha<\kappa$ which are $\alpha$-Ramsey.
\end{proposition}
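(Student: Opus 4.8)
The plan is to prove this by reflecting an instance of $\kappa$-Ramseyness through a measurable ultrapower embedding, in the spirit of the proof of Theorem~\ref{filterprophierarchy}. Fix a normal ultrafilter $U$ on $\kappa$ and let $j\colon V\to M=\mathrm{Ult}(V,U)$ be the induced ultrapower map, so that $\operatorname{crit}(j)=\kappa$ and $M^\kappa\subseteq M$; in particular $\mathcal P(\kappa)^M=\mathcal P(\kappa)^V$ and $(\kappa^{<\kappa})^M=\kappa$. The single fact about $U$ that the argument hinges on is that $U$ is \emph{amenable to $M$ for small sets}: if $B\in M$, $B\subseteq\mathcal P(\kappa)$ and $|B|^M\le\kappa$, then $U\cap B\in M$. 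Indeed, fixing an enumeration $\langle b_\xi\mid\xi<\kappa\rangle\in M$ of $B$, the set $I=\{\xi<\kappa\mid b_\xi\in U\}$ is a subset of $\kappa$, hence lies in $M$, and $U\cap B=\{b_\xi\mid\xi\in I\}$ is definable over $M$ from $\langle b_\xi\mid\xi<\kappa\rangle$ and $I$. Granting for the moment that $M\models$``$\kappa$ is $\kappa$-Ramsey'', I would finish as follows. Let $S=\{\alpha<\kappa\mid\alpha\text{ is a regular cardinal and }\alpha\text{ is }\alpha\text{-Ramsey}\}$. By elementarity $j(S)=\{\alpha<j(\kappa)\mid M\models\alpha\text{ is a regular cardinal and }\alpha\text{ is }\alpha\text{-Ramsey}\}$, and since $\kappa$ is a regular cardinal in $M$ (it is the critical point of $j$) below $j(\kappa)$, we get $\kappa\in j(S)$. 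Were $S$ bounded in $\kappa$, say $S\subseteq\beta$ for some $\beta<\kappa$, then $j(S)\subseteq j(\beta)=\beta$, contradicting $\kappa\in j(S)$. Hence $S$ is unbounded in $\kappa$, which is exactly the claim.

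It remains to verify that $M\models$``$\kappa$ is $\kappa$-Ramsey''. Working inside $M$, and using that $(\kappa^{<\kappa})^M=\kappa$ and $\kappa$ is regular and uncountable in $M$, Theorem~\ref{ramseyfilterequivalence} reduces this to showing that in $M$ the challenger has no winning strategy in the game $G_\kappa^\nu(\kappa)$ for some regular $\nu>\kappa$ of $M$ (by Lemma~\ref{cardinalsequivalence} the choice of $\nu$ is immaterial). Suppose towards a contradiction that $\sigma\in M$ were such a winning strategy. I would run the play of $G_\kappa^\nu(\kappa)$ in $M$ in which the challenger follows $\sigma$ while the judge responds to each challenger move $M_\alpha$ by $F_\alpha:=U\cap\mathcal P(\kappa)^{M_\alpha}$. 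Since each $M_\alpha$ is a $\kappa$-model in $M$, the set $\mathcal P(\kappa)^{M_\alpha}$ is a subset of $\mathcal P(\kappa)$ of size at most $\kappa$ in $M$, so $F_\alpha\in M$ by the amenability fact, and $F_\alpha$ is a legal move: it is a filter on $\kappa$ (being a subset of the ${<}\kappa$-complete ultrafilter $U$), it measures $\mathcal P(\kappa)\cap M_\alpha$ (as $U$ is an ultrafilter), and it contains $F_\beta=U\cap\mathcal P(\kappa)^{M_\beta}$ for every $\beta<\alpha$ (as the $M_\beta$ are $\subseteq$-increasing). Every proper initial segment of this play is a ${<}\kappa$-sequence of elements of $M$, hence lies in $M$ because $M^\kappa\subseteq M$; so $\sigma$ is always defined on it and the play attains full length $\kappa$. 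Its outcome is $M_\kappa=\bigcup_{\alpha<\kappa}M_\alpha$ and $F_\kappa=\bigcup_{\alpha<\kappa}F_\alpha=U\cap\mathcal P(\kappa)^{M_\kappa}$, and $F_\kappa$ is $M_\kappa$-normal: it measures $\mathcal P(\kappa)\cap M_\kappa$, and for any $\vec X=\langle X_\alpha\mid\alpha<\kappa\rangle\in M_\kappa$ consisting of elements of $F_\kappa$ we have $\vec X\in M_\alpha$ for some $\alpha<\kappa$, whence $\largetriangleup\vec X\in M_\alpha\subseteq M_\kappa$ (as $M_\alpha\models\ZFC^-$) while $\largetriangleup\vec X\in U$ by normality of $U$, so $\largetriangleup\vec X\in F_\kappa$. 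Thus the judge wins, contradicting that $\sigma$ is a winning strategy for the challenger in $M$. (Running the identical argument in $V$ instead of $M$ also yields, as a byproduct, that a measurable $\kappa$ is $\kappa$-Ramsey.)

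I expect the main obstacle to be keeping the transfer to $M$ honest: the embedding formulation of $\kappa$-Ramseyness (witnessing models $\prec H(\theta)$ for arbitrarily large $\theta$) is awkward to relativize to $M$, since $H(\theta)^M\ne H(\theta)^V$ for the relevant $\theta$, and passing to the filter-game characterization (Theorem~\ref{ramseyfilterequivalence}) is what sidesteps this. The genuinely delicate point then becomes the amenability fact — that $U$ intersected with any set that $M$ regards as having size at most $\kappa$ is again an element of $M$ — which is precisely what makes the judge's successive moves legal moves \emph{inside $M$}; this, together with $M^\kappa\subseteq M$ (which keeps the entire play inside $M$) and the normality of $U$ (which makes the limit filter $M_\kappa$-normal), is the heart of the matter. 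Everything else is routine absoluteness bookkeeping between $V$ and $M$: the notions of $\kappa$-model, of a legal move and of $M_\gamma$-normality, and the winning condition of the game, are all statements about sets lying in $M$, so they are computed the same way in $V$ and in $M$, given $M^\kappa\subseteq M$. The overall shape of the argument parallels Theorem~\ref{filterprophierarchy}, with $M=\mathrm{Ult}(V,U)$ and the ultrafilter $U$ playing the roles of the target model and of the weakly amenable filter there, and the $\kappa$-closure of $M$ playing the role of the ${<}\alpha_1$-closure of the target model.
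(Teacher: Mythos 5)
Your proof is correct and takes essentially the same route as the paper's, whose entire argument is the remark that one takes the ultrapower embedding $j\colon V\to M$ and, using closure of $M$ under $\kappa$-sequences, proceeds as in the proof of Theorem~\ref{filterprophierarchy} — i.e.\ exactly the judge-plays-$U\cap M_\alpha$ argument inside $M$ that you spell out, followed by reflection below the critical point. The details you supply (amenability of $U$ to $M$ for $\kappa$-sized sets, the full run of length $\kappa$ lying in $M$, and passing through the filter-game characterization of Theorem~\ref{ramseyfilterequivalence}) are precisely the intended ones.
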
 
\begin{proof} 
Assume that $\kappa$ is measurable, as witnessed by $j\colon V\to M$. Using that $M$ is closed under $\kappa$-sequences, the proof now proceeds like the proof of Theorem \ref{filterprophierarchy}.
\end{proof} 

\section{Filter sequences}\label{section:filtersequences}

In this section, we show that the filter properties, which are based on (the non-existence of) winning strategies for certain games, are closely related to certain principles that are solely based on the existence of certain sequences of models and filters.

\begin{definition} 
Let $\alpha$ be an ordinal and let $\kappa$ be a cardinal. Suppose that $\vec{M}=\langle M_i\mid i<\alpha\rangle$ is a $\subseteq$-increasing $\in$-chain of $\kappa$-models, and let $M=\bigcup_{i<\alpha}M_i$. An $M$-normal filter $F$ on $\kappa$ is \emph{amenable for $\vec{M}$} if $F\cap M_i\in M_{i+1}$ for all $i<\alpha$. If such an $\alpha$-sequence $\vec M$ and such an $M$-normal filter $F$ exist, we say that \emph{$\kappa$ has an $\alpha$-filter sequence}. If additionally the ultrapower of $M$ by $F$ is well-founded, we say that \emph{$\kappa$ has a well-founded $\alpha$-filter sequence}. \footnote{As before this additional assumption becomes vacuous if $\alpha$ has uncountable cofinality.} % \todo{check if necessary} We might additionally ask that for every $\alpha\leq\kappa$, $\langle M_i\mid i<\alpha\rangle\in M_\alpha$. 
\end{definition} 

Observe that if $\alpha$ is a limit ordinal and $F$ is a filter on $\kappa$ that is amenable for an $\in$-chain $\vec M=\langle M_i\mid i<\alpha\rangle$ of weak $\kappa$-models, then letting $M=\bigcup_{i<\alpha}M_i$, $F$ is weakly amenable for $M$% and measures $\mathcal P(\kappa)\cap M$
.

The following is immediate by Theorem \ref{ramseyfilterequivalence} and Corollary \ref{wramseyfilterequivalence}.

\begin{observation}
  Assume that $\alpha\le\kappa$ are both cardinals, and $\kappa$ is $\alpha$-Ramsey. Then $\kappa$ has a well-founded $\alpha$-filter sequence.
\end{observation}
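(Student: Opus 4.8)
The plan is to unwind the definitions: $\alpha$-Ramseyness of $\kappa$ translates, via the cited equivalences (together with the definition of $\alpha$-Ramseyness for singular $\alpha$ adopted just above), into the statement that the challenger has no winning strategy in the appropriate (well-founded) filter game of length $\alpha$, and then any play lost by the challenger produces the desired filter sequence almost verbatim. So first I would reduce to the filter-game formulation. Fix a regular $\theta>\kappa$. If $\alpha$ is regular and uncountable, Theorem \ref{ramseyfilterequivalence} gives that $\kappa=\kappa^{<\kappa}$ has the $\alpha$-filter property, i.e.\ the challenger has no winning strategy in $G_\alpha^\theta(\kappa)$ (for this $\theta$, by Lemma \ref{cardinalsequivalence}). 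If $\alpha=\omega$, Corollary \ref{wramseyfilterequivalence} gives the same for the well-founded game $wfG_\omega^\theta(\kappa)$. If $\alpha$ is singular, the relevant property — the well-founded $\alpha$-filter property when $\operatorname{cf}(\alpha)=\omega$, and the $\alpha$-filter property otherwise — holds by the very definition of $\alpha$-Ramsey for singular $\alpha$. In every case, let $G$ denote the game in question; pick any strategy $\sigma$ for the challenger, and note that since $\sigma$ is not winning there is a run $\langle M_i\mid i<\alpha\rangle$, $\langle F_i\mid i<\alpha\rangle$ consistent with $\sigma$ in which the judge wins.

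Next I would read off the filter sequence from this run. Set $\vec M=\langle M_i\mid i<\alpha\rangle$, $M=\bigcup_{i<\alpha}M_i$ and $F=\bigcup_{i<\alpha}F_i$. By Definition \ref{filtergamesdefinition}, $\vec M$ is a $\subseteq$-increasing sequence of $\kappa$-models, and it is an $\in$-chain because clause (c) ensures $M_{\bar\imath}\in M_i$ whenever $\bar\imath<i$. Since the judge wins, $F$ is an $M$-normal filter on $\kappa$. For amenability of $F$ for $\vec M$, I would observe that clause (c) applied at stage $i+1$ yields $F_i\cap M_i\in M_{i+1}$, and that $F\cap M_i=F_i\cap M_i$: the inclusion from right to left is clear since $F_i\subseteq F$, and if $A\in F\cap M_i$ then $A\in\mathcal P(\kappa)\cap M_i$ is measured by $F_i$ by clause (d), while $\kappa\setminus A\notin F_i$ (otherwise $A,\kappa\setminus A\in F$ would have empty intersection, contradicting that $F$ is a filter), so $A\in F_i$. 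Hence $F\cap M_i\in M_{i+1}$ for every $i<\alpha$, so $F$ is amenable for $\vec M$ and $\kappa$ has an $\alpha$-filter sequence.

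Finally I would deal with well-foundedness of the ultrapower of $M$ by $F$. If $\operatorname{cf}(\alpha)=\omega$ (in particular if $\alpha=\omega$), the game $G=wfG_\alpha^\theta(\kappa)$ demands this well-foundedness as part of the judge's winning condition, so it is immediate. If $\operatorname{cf}(\alpha)>\omega$, then $M=\bigcup_{i<\alpha}M_i$ is closed under $\omega$-sequences (any such sequence already lies in some $M_i$, which is a $\kappa$-model hence closed under $\omega$-sequences), and the standard argument applies: a putative descending $\omega$-chain in the ultrapower is represented by a sequence of functions $\kappa\to M$ that lies in $M$, and $M$-normality of $F$ then produces a descending $\omega$-chain of ordinals, a contradiction. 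Either way $\kappa$ has a well-founded $\alpha$-filter sequence.

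There is essentially no serious obstacle here; the argument is bookkeeping around the game rules. The only two points requiring a moment's care are that clause (c) of $G_\alpha^\theta(\kappa)$ supplies exactly the amenability requirement $F\cap M_i\in M_{i+1}$ once one checks the identity $F\cap M_i=F_i\cap M_i$ using the measuring clause (d), and that the cofinality-$\omega$ case must be routed through the well-founded filter games $wfG$ rather than the plain ones — which is precisely the reason those games were introduced, and the reason the statement invokes Corollary \ref{wramseyfilterequivalence} alongside Theorem \ref{ramseyfilterequivalence}.
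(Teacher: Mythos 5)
Your proof is correct and follows exactly the route the paper intends: the paper declares the observation ``immediate by Theorem \ref{ramseyfilterequivalence} and Corollary \ref{wramseyfilterequivalence}'', and the run-extraction you spell out (including the identity $F\cap M_i=F_i\cap M_i$ and the well-foundedness argument via $\omega$-closure of $M$ when $\cof(\alpha)>\omega$) is precisely the bookkeeping already carried out in the proof of Theorem \ref{ramseyfilterequivalence}. No gaps.
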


The next proposition shows that consistency-wise, the existence of (well-founded) $\alpha$-filter sequences forms a proper hierarchy for infinite cardinals $\alpha\le\kappa$, that interleaves with the hierarchy of $\alpha$-Ramsey cardinals. Its proof is similar to the proof of Theorem \ref{filterprophierarchy}.

\begin{proposition}
Suppose that $\omega\le\alpha<\beta\le\kappa$ are cardinals, and that $\kappa$ has a $\beta$-filter sequence. Then there is a proper class of $\alpha$-Ramsey cardinals in $\VV_\kappa$. If $\alpha$ is regular, then $\kappa$ is a limit of $\alpha$-Ramsey cardinals.
\end{proposition}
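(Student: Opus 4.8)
The plan is to mimic the proof of Theorem \ref{filterprophierarchy} essentially verbatim, replacing the use of ``$\kappa$ is $\beta$-Ramsey'' (equivalently, the $\beta$-filter property) by the mere existence of a $\beta$-filter sequence, and checking that the weaker hypothesis still suffices to carry out the reflection argument. First I would fix a $\beta$-filter sequence for $\kappa$: by definition this gives a $\subseteq$-increasing $\in$-chain $\vec M=\langle M_i\mid i<\beta\rangle$ of $\kappa$-models with $M=\bigcup_{i<\beta}M_i$ and an $M$-normal filter $F$ on $\kappa$ that is amenable for $\vec M$, i.e. $F\cap M_i\in M_{i+1}$ for all $i<\beta$. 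As noted just before the proposition, $F$ is then weakly amenable for $M$. We may assume $\beta$ is regular, replacing it by a regular $\bar\beta$ with $\alpha<\bar\beta<\beta$ if necessary (a $\beta$-filter sequence trivially restricts to a $\bar\beta$-filter sequence by taking $\langle M_i\mid i<\bar\beta\rangle$ and $F\cap\bigcup_{i<\bar\beta}M_i$). Then $\beta$ regular means $M$ is closed under ${<}\beta$-sequences, so $F$ is an $M$-normal \emph{good} filter in the sense of the earlier definitions (weak amenability plus, by regularity of $\beta>\omega$ when $\beta$ is uncountable — and when $\beta$ has uncountable cofinality in general — well-foundedness of the ultrapower; if $\beta$ has cofinality $\omega$ one should instead start from a \emph{well-founded} $\beta$-filter sequence, but since the hypothesis is just a $\beta$-filter sequence, note $\beta>\alpha\ge\omega$ so one can choose $\bar\beta$ of uncountable cofinality and the issue disappears). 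Let $j\colon M\to N$ be the induced well-founded ultrapower embedding, which by Lemma \ref{wapp_equivalence}(1) is $\kappa$-powerset preserving; we may assume $N$ transitive, and by standard arguments $N$ is closed under ${<}\beta$-sequences, and $F$ is weakly amenable for $N$ and $N$-normal.

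Next, fix any regular cardinal $\nu>\kappa$ of $N$ and a regular cardinal $\alpha\le\kappa$ (treating the regular case; the general ``proper class in $\VV_\kappa$'' statement will follow by reflection). I claim $\kappa$ has the well-founded $(\alpha,\nu)$-filter property in $N$. Suppose not: let the challenger play a winning strategy for $wfG_\alpha^\nu(\kappa)$ inside $N$, and have the judge respond to each $\kappa$-model $X\prec H(\nu)$ played by the challenger with $F\cap X\in N$ (this lies in $N$ by weak amenability of $F$ for $N$, since $X$ has size $\kappa$ in $N$). By closure of $N$ under ${<}\alpha$-sequences (indeed ${<}\beta$-sequences) the entire run is an element of $N$, and the judge's moves are legal: the $F\cap X_\gamma$ are $\subseteq$-increasing, measure $\mathcal P(\kappa)\cap X_\gamma$, and their union $F\cap Y$ (where $Y=\bigcup_\gamma X_\gamma$) is $Y$-normal since $F$ is $N$-normal and $Y\in N$. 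For the well-foundedness requirement in $wfG_\alpha^\nu(\kappa)$: any putative ill-foundedness of the ultrapower of $Y$ by $F\cap Y$ is witnessed by functions $\langle f_i\mid i<\omega\rangle$ in $Y$ with $F_i=\{\alpha<\kappa\mid f_{i+1}(\alpha)\in f_i(\alpha)\}\in F$ for all $i$; since $N$ is transitive and closed under $\omega$-sequences, $\langle f_i\mid i<\omega\rangle\in N$, and $N$-normality of $F$ gives $\bigcap_i F_i\in F$, hence nonempty, yielding a descending $\omega$-sequence of ordinals in $N$ — a contradiction. So the judge wins, contradicting that the challenger followed a winning strategy. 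Thus $N\models$ ``$\kappa$ has the well-founded $(\alpha,\nu)$-filter property'' for every regular $\nu>\kappa$ of $N$, i.e. $N\models$ ``$\kappa$ has the well-founded $\alpha$-filter property'', which by Theorem \ref{ramseyfilterequivalence} and Corollary \ref{wramseyfilterequivalence} means $N\models$ ``$\kappa$ is $\alpha$-Ramsey''.

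Finally, since $\mathrm{crit}(j)=\kappa$, elementarity of $j\colon M\to N$ transfers ``$\kappa$ is $\alpha$-Ramsey in $N$'' down to ``$M\models$ there is a proper class of $\alpha$-Ramsey cardinals below $\kappa$'', and since being $\alpha$-Ramsey (for regular $\alpha$, equivalently the $\alpha$-filter property / $\alpha$-Ramseyness) is, via Theorem \ref{modestequivalences}(d) and the remarks on transitive witnesses, sufficiently absolute between $M$ and $V$ for cardinals below $\kappa$ — the relevant weak $\kappa'$-models and embeddings can be taken transitive and of size $\kappa'<\kappa$, so they lie in $H(\kappa^+)\subseteq M$ correctly — we conclude there is a proper class of $\alpha$-Ramsey cardinals in $\VV_\kappa$, and if $\alpha$ is regular then, reflecting the statement ``$\kappa$ is $\alpha$-Ramsey'' which is $\Sigma$ over $H(\theta)$ for large $\theta$, that $\kappa$ is a limit of $\alpha$-Ramsey cardinals. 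The main obstacle I anticipate is bookkeeping the well-foundedness condition cleanly when $\alpha$ or $\beta$ has countable cofinality: one must be slightly careful to reduce to a regular $\bar\beta$ of uncountable cofinality (or invoke a well-founded $\beta$-filter sequence) so that the good/well-founded ultrapower $j$ genuinely exists, and to match this against the well-founded filter game $wfG$ rather than the plain game $G$ on the $N$ side; everything else is a routine transcription of the proof of Theorem \ref{filterprophierarchy}.
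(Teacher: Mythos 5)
Your proposal is correct and follows essentially the same route as the paper's own proof, which likewise reduces to a regular $\bar\beta$ strictly between $\alpha$ and $\beta$, forms the well-founded ultrapower $j\colon M\to N$ of the union of the chain by the (weakly amenable, hence $N$-normal) filter $F$, has the judge defeat any strategy in $wfG_\alpha^\nu(\kappa)$ inside $N$ by playing $\kappa$-sized pieces of $F$, and concludes by elementarity via Theorem \ref{ramseyfilterequivalence}. The extra care you take with countable cofinality is exactly the right bookkeeping and is subsumed by the observation that any regular $\bar\beta>\alpha\ge\omega$ is uncountable.
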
 
\begin{proof}
  We may assume that $\beta$ is regular, for we may replace it with a regular $\bar\beta$ that lies strictly between $\alpha$ and $\beta$ otherwise. Suppose that $\kappa$ has a $\beta$-filter sequence, as witnessed by $\vec{M}=\langle M_i\mid i<\beta\rangle$, $M=\bigcup_{i<\beta}M_i$, and by the $M$-normal filter $F$. Let $N$ be the well-founded ultrapower of $M$ by $F$, using that $M$ is closed under ${<}\beta$-sequences, and note that since $\mathcal P(\kappa)^M=\mathcal P(\kappa)^N$, $F$ is weakly amenable for $N$ and $N$-normal. Note that $N$ is also closed under ${<}\beta$-sequences. Let $\nu>\kappa$ be a regular cardinal in $N$. Then $\kappa$ has the $(\alpha,\nu)$-filter property in $N$, since the judge can win any relevant (well-founded) filter game in $N$ by playing appropriate $\kappa$-sized pieces of $F$, just like in the proof of Theorem \ref{filterprophierarchy}. 
  
  As in that proof, the first statement of the proposition now follows by elementarity together with Theorem \ref{ramseyfilterequivalence}, and its second statement follows immediately from the regularity of $\alpha_0$ together with the relevant definitions.
\end{proof}

\begin{observation} 
The existence of a $\kappa$-filter sequence does not imply that $\kappa$ is weakly compact. %What is the strength of the existence of a $\in$-chain $\langle M_i\mid i<\kappa\rangle$ with an amenable filter? Example: take a measurable and destroy its weak compactness (Kunen). Then the chain is still there. 
\end{observation}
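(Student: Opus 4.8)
The goal is to show that having a $\kappa$-filter sequence is strictly weaker than weak compactness, so the natural strategy is to \emph{force} a $\kappa$-filter sequence over a model where $\kappa$ is not weakly compact --- indeed, we can aim for $\kappa$ to be the least inaccessible cardinal, or even just some successor cardinal, as long as the construction survives. The cleanest approach, however, is to start from a measurable cardinal $\kappa$ with a normal ultrafilter $U$ and then destroy weak compactness while preserving a witness to the $\kappa$-filter sequence. First I would observe that a $\kappa$-filter sequence is a very ``local'' object: it is a sequence $\vec M = \langle M_i \mid i<\kappa\rangle$ of $\kappa$-models together with an $M$-normal filter $F$ amenable for $\vec M$, where $M = \bigcup_{i<\kappa} M_i$, and crucially $F$ need not measure all of $\mathcal{P}(\kappa)$ --- only $\mathcal{P}(\kappa)\cap M$, and $|M| = \kappa$. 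So a single such sequence is coded by a subset of $\kappa$ (after fixing bijections), and its existence is witnessed by an object of size $\kappa$.

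\textbf{Key steps.} I would proceed as follows. (1) Start in $V$ with $\kappa$ measurable, $2^\kappa = \kappa^+$, witnessed by $j\colon V\to \mathrm{Ult}(V,U)=:\mathcal{M}$ with critical point $\kappa$. (2) Build, in $V$, a $\kappa$-filter sequence $\vec M = \langle M_i \mid i<\kappa\rangle$ with union $M\prec H(\kappa^+)$ and amenable $M$-normal filter $F := U\cap M$; this works exactly as in the forward direction of Theorem~\ref{ramseyfilterequivalence}, using that $U\restriction M_i \in V$ and can be threaded into the next model, and using closure of the $M_i$ under ${<}\kappa$-sequences (which one arranges by a standard continuous chain construction, possible since $\kappa^{<\kappa}=\kappa$). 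Note the ultrapower of $M$ by $F$ embeds into $\mathcal{M}$, hence is well-founded. (3) Now force with a $\kappa$-closed (or at least ${<}\kappa$-distributive) notion of forcing $\mathbb{P}$ that destroys the weak compactness --- indeed the inaccessibility --- of $\kappa$ while adding no new subsets of $\kappa$. The simplest choice: add a non-reflecting stationary subset of $S^\kappa_\omega$, or collapse nothing but kill the tree property; more robustly, one can use the forcing to add a $\kappa$-Souslin tree, which is ${<}\kappa$-closed and hence adds no new bounded subsets of $\kappa$ --- wait, it does add a new subset of $\kappa$. So instead use the forcing $\mathrm{Add}(\kappa,1)$-style coding that adds a non-reflecting stationary set via conditions of size ${<}\kappa$; this is ${<}\kappa$-strategically closed and adds no new ${<}\kappa$-sequences. (4) Since $\mathbb{P}$ adds no new subsets of $\kappa$, the objects $\vec M$, $M$, $F$ from step (2) still exist in $V^{\mathbb{P}}$, each $M_i$ is still a $\kappa$-model there (closure under ${<}\kappa$-sequences is preserved since no new ${<}\kappa$-sequences appear), $F$ is still $M$-normal and amenable for $\vec M$, and its ultrapower of $M$ is still well-founded (well-foundedness of a fixed relation is absolute). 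Hence $\kappa$ still has a (well-founded) $\kappa$-filter sequence in $V^{\mathbb{P}}$, but $\kappa$ is no longer weakly compact there.

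\textbf{Main obstacle.} The delicate point is choosing the destroying forcing $\mathbb{P}$ correctly: it must (a) kill weak compactness of $\kappa$ and (b) add no new subsets of $\kappa$, since the filter $F$ lives on $\kappa$ and the models $M_i$ must retain their ${<}\kappa$-closure. Standard ${<}\kappa$-closed forcings do add new subsets of $\kappa$ (e.g.\ the generic itself), so one cannot use those naively. The right tool is a ${<}\kappa$-\emph{distributive} forcing that destroys weak compactness without adding subsets of $\kappa$ --- but in fact any forcing adding a subset of $\kappa$ changes $\mathcal{P}(\kappa)$, so we need the forcing to kill weak compactness while $\mathcal{P}(\kappa)^{V^{\mathbb{P}}} = \mathcal{P}(\kappa)^V$, which is impossible for weak compactness destruction in the strict sense. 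The resolution: we do \emph{not} need $\mathcal{P}(\kappa)$ unchanged --- we only need the specific small objects $\vec M, F$ to survive, and we need $\kappa$ to fail weak compactness. So a ${<}\kappa$-closed forcing that adds a non-reflecting stationary set $S\subseteq S^\kappa_\omega$ works: it is ${<}\kappa$-closed, hence adds no new ${<}\kappa$-sequences (so the $M_i$ stay closed under ${<}\kappa$-sequences and $F$'s ultrapower of $M$ stays well-founded --- though here $\alpha=\kappa$ has uncountable cofinality so well-foundedness is automatic), it preserves cardinals, and the non-reflecting stationary set witnesses that $\kappa$ is not weakly compact (indeed not even ineffable, by the tree-property/reflection failure). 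The amenability $F\cap M_i \in M_{i+1}$ is a statement about fixed sets of size $\kappa$ and is preserved since $\mathbb{P}$ adds no new subsets of $\kappa$ of the relevant kind --- here one must double-check that $F\cap M_i$, computed in $V$, is still the same set, which it is, as it is an element of $V$ and membership is absolute. This completes the argument: $\kappa$ has a $\kappa$-filter sequence in $V^{\mathbb{P}}$ but is not weakly compact there.

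\begin{proof}
We sketch the argument. Start in a model $V$ in which $\kappa$ is measurable and $2^\kappa = \kappa^+$, and fix a normal ultrafilter $U$ on $\kappa$ with ultrapower map $j\colon V\to\mathcal M$. Exactly as in the forward direction of the proof of Theorem~\ref{ramseyfilterequivalence} (building a continuous $\subseteq$-increasing $\in$-chain $\langle M_i \mid i<\kappa\rangle$ of $\kappa$-models with $M_i\prec H(\kappa^+)$, using $\kappa^{<\kappa}=\kappa$ to keep each $M_i$ closed under ${<}\kappa$-sequences, and feeding $U\cap M_i$ into $M_{i+1}$), we obtain in $V$ a $\kappa$-filter sequence: a $\subseteq$-increasing $\in$-chain $\vec M = \langle M_i \mid i<\kappa\rangle$ of $\kappa$-models with union $M$, together with the $M$-normal filter $F := U\cap M$, which is amenable for $\vec M$ since $F\cap M_i = U\cap M_i \in V$ and can be arranged to lie in $M_{i+1}$. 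The ultrapower of $M$ by $F$ embeds into $\mathcal M$ and is hence well-founded.

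Now let $\mathbb P$ be the ${<}\kappa$-closed forcing that adds a non-reflecting stationary subset of $S^\kappa_\omega$ by initial segments; $\mathbb P$ is ${<}\kappa$-closed, so it adds no new ${<}\kappa$-sequences of ordinals, and it preserves cardinals and cofinalities. Let $G$ be $\mathbb P$-generic over $V$. In $V[G]$, the cardinal $\kappa$ carries a non-reflecting stationary set, so $\kappa$ is not weakly compact (indeed, stationary reflection fails at $\kappa$, so $\kappa$ is not even ineffable).

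On the other hand, the objects $\vec M$, $M$, $F$ all belong to $V\subseteq V[G]$. Since $\mathbb P$ adds no new ${<}\kappa$-sequences, each $M_i$ is still a $\kappa$-model in $V[G]$: it has size $\kappa$, contains $\kappa+1$, satisfies $\ZFC^-$, and remains closed under ${<}\kappa$-sequences (there being no new such sequences). The statement ``$F$ is $M$-normal'' concerns only $\mathcal P(\kappa)\cap M$ and diagonal intersections of $\kappa$-sequences from $M$, all of which are unchanged; likewise $F$ remains amenable for $\vec M$, as $F\cap M_i \in M_{i+1}$ is an absolute statement about fixed sets. Finally, $\alpha=\kappa$ has uncountable cofinality, so $M$ is still closed under $\omega$-sequences in $V[G]$ and the ultrapower of $M$ by $F$ remains well-founded (alternatively, well-foundedness of the fixed $\in$-relation on the ultrapower is absolute). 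Hence in $V[G]$, $\kappa$ has a well-founded $\kappa$-filter sequence, yet $\kappa$ is not weakly compact.
\end{proof}
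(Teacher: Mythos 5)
Your argument is correct, but it takes a genuinely different route from the paper. The paper's proof is a two-step black-box argument: start from any model in which $\kappa$ is weakly compact and already has a $\kappa$-filter sequence, perform a small forcing, and then invoke the superdestructibility theorem of \cite{MR1607499} to find a ${<}\kappa$-closed forcing killing weak compactness; the filter sequence survives because the forcing adds no new ${<}\kappa$-sequences. You instead build the filter sequence explicitly from a normal measure $U$ (by threading the restrictions $U\cap M_i$ into an $\in$-chain of $\kappa$-models below $H(\kappa^+)$) and then kill weak compactness directly by adding a non-reflecting stationary subset of $S^\kappa_\omega$. Your approach is more self-contained --- it avoids the superdestructibility machinery and the preliminary small forcing, at the cost of only establishing the separation above a measurable rather than from the optimal hypothesis (the paper needs only the conjunction of weak compactness with the existence of a $\kappa$-filter sequence, which already follows from $\kappa$-Ramseyness). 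Two small inaccuracies should be repaired, neither fatal: (i) the forcing to add a non-reflecting stationary set by bounded approximations is ${<}\kappa$-\emph{strategically} closed (hence ${<}\kappa$-distributive, which is all you use), not ${<}\kappa$-closed --- an arbitrary descending sequence of conditions of uncountable limit length can have a union whose generic part is stationary in its supremum; your preliminary discussion gets this right but the proof body reverts to claiming closure; and (ii) the chain $\langle M_i\mid i<\kappa\rangle$ should \emph{not} be taken continuous, since at limit stages of cofinality ${<}\kappa$ the union of the earlier models fails to be closed under ${<}\kappa$-sequences; the definition of a filter sequence only requires a $\subseteq$-increasing $\in$-chain, so one simply chooses a fresh $\kappa$-model at every stage.
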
 
\begin{proof}
  Start in a model with a $\kappa$-filter sequence in which $\kappa$ is also weakly compact. Perform some forcing of size less than $\kappa$. This preserves both these properties of $\kappa$. Now by \cite[Main Theorem]{MR1607499}, there is a ${<}\kappa$-closed forcing that destroys the weak compactness of $\kappa$ over this model. Clearly this forcing preserves the existence of the $\kappa$-filter sequence that we started with.
\end{proof} 

However for regular cardinals $\alpha$, we can actually characterize $\alpha$-Ramsey cardinals by the existence of certain filter sequences. Note that the following proposition is highly analogous to Theorem \ref{modestequivalences}, and that some more equivalent characterizations of $\alpha$-Ramseyness could be extracted from the proof of that theorem, similar to the ones below.

\begin{proposition} 
The following are equivalent, for regular cardinals $\alpha\le\kappa$.
\begin{enumerate-(a)} 
\item $\kappa$ is $\alpha$-Ramsey.
\item For every regular $\theta>\kappa$, $\kappa$ has an $\alpha$-filter sequence, as witnessed by $\vec M=\langle M_i\mid i<\alpha\rangle$ and $F$, where each $M_i\prec H(\theta)$.
\end{enumerate-(a)} 
If $\alpha>\omega$, the following property is also equivalent to the above.
\begin{enumerate-(a)}
\item[$(c)$] For some regular $\theta>2^\kappa$ and every $A\subseteq\kappa$, $\kappa$ has an $\alpha$-filter sequence, as witnessed by $\vec M=\langle M_i\mid i<\alpha\rangle$ and $F$, where $A\in M_0$ and each $M_i\prec H(\theta)$.
\end{enumerate-(a)}
\end{proposition}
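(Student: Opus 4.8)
The plan is to prove the equivalences by establishing the cycle (a) $\Rightarrow$ (b) $\Rightarrow$ (a) and, for $\alpha > \omega$, (a) $\Rightarrow$ (c) $\Rightarrow$ (a), closely following the structure of the proofs of Theorem \ref{ramseyfilterequivalence} and Theorem \ref{modestequivalences}. The key observation connecting everything is that a run of the game $G_\alpha^\theta(\kappa)$ won by the judge produces exactly the data of an $\alpha$-filter sequence witnessed by $\in$-chains of elementary submodels of $H(\theta)$, together with the amenability condition coming from clause (c) of Definition \ref{filtergamesdefinition}; conversely, an $\alpha$-filter sequence gives the judge enough information to defeat any challenger strategy.

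For (a) $\Rightarrow$ (b): fix a regular $\theta > \kappa$. Since $\kappa$ is $\alpha$-Ramsey, by Theorem \ref{ramseyfilterequivalence} (using $\alpha > \omega$) or Corollary \ref{wramseyfilterequivalence} (for $\alpha = \omega$) the challenger has no winning strategy in $G_\alpha^\theta(\kappa)$ (resp. $wfG_\alpha^\theta(\kappa)$). Let the challenger play any strategy that builds $\subseteq$-increasing $\in$-chains $\langle M_i \mid i < \alpha\rangle$ of $\kappa$-models with $M_i \prec H(\theta)$ (this is a legal strategy), and consider a run in which the judge wins. Set $M = \bigcup_{i<\alpha} M_i$ and let $F = F_\alpha = \bigcup_{i<\alpha} F_i$ be the filter the judge produced. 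Then $F$ is $M$-normal since the judge won; and $F$ is amenable for $\vec M$ because clause (c) of the game guarantees $F_i \cap M_i \in M_{i+1}$, and $F \cap M_i = F_i \cap M_i$ since the filters are $\subseteq$-increasing and $F_j$ measures only $\mathcal P(\kappa) \cap M_j$. For $\alpha = \omega$ the well-foundedness of the ultrapower is exactly the extra winning condition of $wfG_\omega^\theta(\kappa)$. For $\alpha > \omega$, regularity of $\alpha$ makes $M$ closed under ${<}\alpha$-sequences (in particular under $\omega$-sequences), so well-foundedness is automatic, exactly as noted after Theorem \ref{ramseyfilterequivalence}. This gives an $\alpha$-filter sequence with each $M_i \prec H(\theta)$.

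For (b) $\Rightarrow$ (a): this mirrors the backward direction of Theorem \ref{ramseyfilterequivalence}. Given an $\alpha$-filter sequence $\vec M = \langle M_i \mid i < \alpha\rangle$, $M = \bigcup_i M_i$, and $M$-normal filter $F$ amenable for $\vec M$, the union $M$ is a weak $\kappa$-model closed under ${<}\alpha$-sequences (by regularity of $\alpha$), and $F$ is weakly amenable for $M$ by the observation immediately preceding the statement (applied with $\alpha$ a limit ordinal, which it is since $\alpha$ is an uncountable regular cardinal — and for $\alpha = \omega$ well-foundedness is part of the hypothesis). So $F$ is good for $M$, and by Lemma \ref{wapp_equivalence}(1) it induces a $\kappa$-powerset preserving embedding $j\colon M \to N$; since each $M_i \prec H(\theta)$ we get $M \prec H(\theta)$, and since $\theta$ was an arbitrary regular cardinal above $\kappa$, this is exactly characterization (b) of Theorem \ref{modestequivalences}, hence $\kappa$ is $\alpha$-Ramsey. (For $A = \emptyset$ there is nothing to require; for the version of (b) in Theorem \ref{modestequivalences} demanding $A \in M_0$ one simply has the challenger open with a model containing $A$, but as stated (b) here does not mention $A$ so this is not needed.) The equivalence (a) $\Leftrightarrow$ (c) for $\alpha > \omega$ is handled the same way: (a) $\Rightarrow$ (c) by running the above argument for the single cardinal $\theta \ge (2^\kappa)^+ > 2^\kappa$ using Theorem \ref{modestequivalences}(e) to arrange $A \in M_0$, and (c) $\Rightarrow$ (a) because the resulting good filter witnesses property (e) of Theorem \ref{modestequivalences}, which is equivalent to $\alpha$-Ramseyness by Lemma \ref{finalimplication}. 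The main obstacle, and the only point requiring genuine care, is verifying the identity $F \cap M_i = F_i \cap M_i$ in the (a) $\Rightarrow$ (b) direction so that amenability of $F$ for $\vec M$ really follows from clause (c) of the game — this hinges on the fact that each $F_j$ only measures sets in $M_j$, so later filters in the chain contribute no new subsets of $M_i$ — together with checking that regularity of $\alpha$ delivers the closure of the union model needed for well-foundedness when $\alpha > \omega$; both are essentially bookkeeping, so the proposition follows with no substantial new ideas beyond those already used for Theorems \ref{modestequivalences} and \ref{ramseyfilterequivalence}.
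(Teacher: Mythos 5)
Your proof is correct and follows essentially the same route as the paper's: (a) yields (b) and (c) by reading the models and filters off a run of the filter game won by the judge, exactly as in the proof of Theorem \ref{ramseyfilterequivalence}, while (b) and (c) give back (a) via characterizations (d) and (e) of Theorem \ref{modestequivalences}. The one quibble is your justification of $F\cap M_i=F_i\cap M_i$: the reason is not that later filters measure no new sets (they measure all of $\mathcal P(\kappa)\cap M_j\supseteq\mathcal P(\kappa)\cap M_i$), but that $F_i$ already measures $\mathcal P(\kappa)\cap M_i$ and no filter $F_j\supseteq F_i$ can contain both a set and its complement, so later stages cannot add elements of $M_i$ — a one-line fix.
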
 
\begin{proof}
  If $\kappa$ is $\alpha$-Ramsey, then both (b) and (c) are immediate by the proof of Theorem \ref{ramseyfilterequivalence}.
  
  Now assume that (b) holds. Thus fix some regular $\theta>\kappa$, and let (b) be witnessed by $\vec M$ and by $F$. Then $M=\bigcup_{i<\alpha}M_i\prec H(\theta)$ is a weak $\kappa$-model closed under ${<}\alpha$-sequences, $F$ is weakly amenable for $M$ and the ultrapower of $M$ by $F$ is well-founded. This shows that $\kappa$ is $\alpha$-Ramsey by Theorem \ref{modestequivalences}, (d).
  
  Assuming that (c) holds and that $\alpha>\omega$, the same argument shows that $\kappa$ is $\alpha$-Ramsey, this time making use of Theorem \ref{modestequivalences}, (e).
\end{proof}

\section{Absoluteness to L}\label{section:absoluteness}

Weakly Ramsey cardinals are downward absolute to $L$ by \cite[Theorem 3.12]{MR2830435}. Since $\omega_1$-Ramsey cardinals are Ramsey by Proposition \ref{w1RR}, they cannot exist in $L$. We want to show that $\omega$-Ramsey cardinals are downwards absolute to $L$. This proof is a variation of the proof of \cite[Theorem 3.4]{MR2830435}. We will make use of a slight adaption of what is known as the \emph{ancient Kunen lemma}.

\begin{lemma}\label{ancientkunen}
  Let $M\models\ZFC^-$, let $j\colon M\to N$ be an elementary embedding with critical point $\kappa$, such that $\kappa+1\subseteq M\subseteq N$. Assume that $|X|^M=\kappa$. Then $j\upharpoonright X\in N$.
\end{lemma}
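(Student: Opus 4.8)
\textbf{Proof strategy for the ancient Kunen lemma (Lemma \ref{ancientkunen}).}

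The plan is to exhibit $j\restr X$ as an element of $N$ by coding it as a single subset of $X\times N$ (or rather, an element of $N$ definable from parameters in $N$) and then showing that this code is itself definable inside $N$ from parameters available there. First I would fix in $M$ a surjection $e\colon\kappa\to X$, which exists since $|X|^M=\kappa$; such an $e$ is an element of $M$, hence $j(e)\in N$ is a surjection from $j(\kappa)$ onto $j(X)$ in $N$. Since $\kappa$ is the critical point of $j$, we have $j\restr\kappa=\id$, so for every $\xi<\kappa$, $j(e(\xi))=j(e)(j(\xi))=j(e)(\xi)$, where the last equality uses $j(\xi)=\xi$. In other words, the function $\xi\mapsto j(e(\xi))$ agrees with $j(e)\restr\kappa$, which is an element of $N$ because $j(e)\in N$ and $\kappa\in N$ (as $\kappa+1\subseteq M\subseteq N$ and $N\models\ZFC^-$, so $N$ can form the restriction).

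Next I would transfer this back along $e$. Given that $e$ is a surjection onto $X$, every element $x\in X$ is of the form $e(\xi)$ for some $\xi<\kappa$, and then $j(x)=j(e(\xi))=(j(e)\restr\kappa)(\xi)$. Thus $j\restr X = \set{\langle e(\xi), (j(e)\restr\kappa)(\xi)\rangle}{\xi<\kappa}$, which is precisely the image of the function $(j(e)\restr\kappa)$ under the map induced by $e$ on the first coordinate — concretely, $j\restr X = \set{\langle e(\xi),y\rangle}{\xi<\kappa \,\land\, y = (j(e)\restr\kappa)(\xi)}$. Every ingredient on the right-hand side — namely $e$, $\kappa$, and $j(e)\restr\kappa$ — lies in $N$: $e\in M\subseteq N$, $\kappa\in N$, and $j(e)\restr\kappa\in N$ as argued above. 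Since $N\models\ZFC^-$ and this is a definition by a $\Sigma_0$-formula (in fact a replacement/collection instance applied to the set $\kappa$), $N$ can form this set, so $j\restr X\in N$, as desired.

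The main subtlety — and the step I would be most careful about — is making sure that $N$ really can carry out the relevant set-formation: we only have $\ZFC^-$ (no powerset) in $N$, so I want the definition of $j\restr X$ to go through as an instance of Collection (or Replacement) applied to the set $\kappa\in N$, rather than requiring any power set. Since the defining formula ``$y=(j(e)\restr\kappa)(\xi)$'' uses only the parameter $j(e)\restr\kappa\in N$ and quantifies over $\xi<\kappa$, this is unproblematic. A secondary point worth stating explicitly is why $j(e)\restr\kappa\in N$: one needs $\kappa\in N$, which follows from $\kappa+1\subseteq M\subseteq N$, and then the restriction of the set $j(e)\in N$ to the set $\kappa\in N$ is again formed by a $\Sigma_0$-separation instance available in $\ZFC^-$. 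No appeal to transitivity of $M$ or $N$ is needed anywhere, which is exactly why this ``slight adaption'' works for possibly non-transitive weak $\kappa$-models.
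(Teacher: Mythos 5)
Your argument is correct and is essentially the paper's own proof: both fix an enumeration $e\colon\kappa\to X$ in $M$, observe that $j(e(\xi))=j(e)(\xi)$ for $\xi<\kappa$ since $j$ fixes ordinals below its critical point, and then note that $j\upharpoonright X$ is definable in $N$ from the parameters $e$, $j(e)$, and $\kappa$, all of which lie in $N$ because $M\subseteq N$ and $\kappa+1\subseteq N$. Your extra care about which $\ZFC^-$ set-formation instances are needed is a fine addition but does not change the argument.
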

\begin{proof}
  Note that $j\upharpoonright X$ is definable from an enumeration $f$ of $X$ in $M$ in order-type $\kappa$, together with $j(f)$, both of which are elements of $N$ by our assumptions. Namely, for $x\in X$,
\[j(x)=y\ \iff\ \exists\alpha<\kappa\ x=f(\alpha)\,\land\,y=j(f)(\alpha).\]
The lemma follows as $\kappa+1\subseteq N$ implies that this definition is absolute between $N$ and $V$.
\end{proof}

%We will make use of the standard lemma that if $0^\sharp$ exists and $\kappa$ is a Silver indiscernible, then $\cof((\kappa^+)^L)=\omega$ (see \cite{MR645538} for a proof).

\begin{lemma}\label{SilverOmegaRamsey}
  If $0^\sharp$ exists, then all Silver indiscernibles are $\omega$-Ramsey in $L$.
\end{lemma}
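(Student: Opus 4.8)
The plan is to mimic the standard argument (used in the proof that measurables, Ramseys etc.\ are downward absolute to $L$ under sharps) and exploit the total indiscernibility of the Silver indiscernibles. Assume $0^\sharp$ exists and let $I$ be the closed unbounded class of Silver indiscernibles. Fix a Silver indiscernible $\kappa$; we must show $\kappa$ is $\omega$-Ramsey in $L$. By Corollary \ref{wramseyfilterequivalence} (applied inside $L$), it suffices to show that, in $L$, $\kappa=\kappa^{<\kappa}$ and $\kappa$ has the well-founded $\omega$-filter property, i.e.\ the challenger has no winning strategy in $wfG_\omega^\theta(\kappa)$ for every regular $\theta>\kappa$. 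Equivalently, and more conveniently, it suffices by Theorem \ref{modestequivalences}(d) (and Corollary \ref{wramseyfilterequivalence}) to produce, for arbitrarily large $L$-regular $\theta$, a weak $\kappa$-model $M\prec H(\theta)^L$ closed under $\omega$-sequences (as computed in $L$) together with a $\kappa$-powerset preserving elementary embedding $j\colon M\to N$; note $\kappa^{<\kappa}=\kappa$ in $L$ is automatic since $\kappa$ is inaccessible in $L$.

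To build such an $M$ and $j$: fix $\theta\in I$ with $\theta>\kappa$ (Silver indiscernibles are regular, indeed inaccessible, in $L$, and form a proper class, so this gives arbitrarily large $\theta$). Let $\lambda$ be the next Silver indiscernible above $\theta$, and consider the class of $L$-constructibility-definable Skolem hulls: let $M = \mathrm{Hull}^{H(\theta)^L}\bigl((\kappa\cup\{\kappa\})\cup(I\cap[\kappa,\theta))\bigr)$, the Skolem hull in $L$ (using the canonical definable $L$-Skolem functions) of $\kappa+1$ together with a suitable set of indiscernibles below $\theta$ — one chooses a cofinal-in-$\theta$ set $J$ of indiscernibles of order type $\kappa$ so that $|M|=\kappa$, $\kappa+1\subseteq M$, and $M\cap\theta$ is "generated by indiscernibles" above $\kappa$. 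The point of choosing the generators to be indiscernibles (plus $\kappa+1$) is that the shift-of-indiscernibles map extends to a non-trivial elementary self-embedding. Concretely, let $J'\subseteq I\cap(\kappa,\theta)$ be another set of indiscernibles, order-isomorphic to $J$ but shifted so that $\kappa$ is no longer the critical point being hit — actually one wants $j$ to have critical point $\kappa$, so one keeps the generators below $\kappa$ fixed, keeps $\kappa$ itself as an element but lets $j(\kappa)$ be the next indiscernible, and maps $J$ order-isomorphically onto a set of indiscernibles in the interval $(j(\kappa),\lambda)$. By indiscernibility (the key property of $0^\sharp$: any order-preserving map of indiscernibles induces an elementary embedding of the corresponding hulls), this extends uniquely to an elementary embedding $j\colon M\to N$ where $N=\mathrm{Hull}^{H(\lambda)^L}(\ldots)$, with critical point $\kappa$.

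The heart of the matter — and the step I expect to be the main obstacle — is verifying two things: (i) that $M$ (and $N$) are, or can be arranged to be, closed under $\omega$-sequences \emph{in $L$}, and (ii) that $j$ is $\kappa$-powerset preserving, i.e.\ $\mathcal P(\kappa)^M = \mathcal P(\kappa)^N$. For (ii), the standard trick is that every subset of $\kappa$ in $N$ lies in some $L_\delta$ for $\delta<j(\kappa)$, and by condensation/indiscernibility and the fact that $j$ is the identity below $\kappa$, such a set is already definable over $M$ from parameters below $\kappa$ together with indiscernibles, hence in $M$; one uses here that $\kappa+1\subseteq M$ so $M$ computes $\mathcal P(\kappa)\cap L_\kappa$ correctly, and the ancient Kunen lemma (Lemma \ref{ancientkunen}) to see $j\restriction A\in N$ for $A\in M$ of size $\kappa$, which helps control how $N$ sees subsets of $\kappa$. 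For (i), the subtlety is that "closed under $\omega$-sequences" must hold in $L$; since $\kappa$ is inaccessible in $L$ and $M$ has size $\kappa$ in $L$, one can arrange closure under $\omega$-sequences by taking $M$ to be an increasing union of length $\kappa$ of smaller hulls and closing off — but one must do this while keeping $M$ an elementary submodel of $H(\theta)^L$ and keeping its "skeleton" made of indiscernibles so that the embedding still exists; this requires interleaving the Skolem-hull construction with an $\omega$-closure construction and checking the two are compatible. I would handle this by first building $M_0\prec H(\theta)^L$ of size $\kappa$, closed under $\omega$-sequences in $L$, with $\kappa+1\subseteq M_0$ and $M_0\cap I$ cofinal in $\sup(M_0\cap\theta)$ of order type $\kappa$, then noting that such an $M_0$ can be taken with $M_0=\mathrm{Hull}(\ldots)$ over its own indiscernibles by absorbing Skolem terms, and finally applying the shift-of-indiscernibles argument to this $M_0$. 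Once $M$, $N$, $j$ are in hand with these properties, Theorem \ref{modestequivalences}(d) together with Corollary \ref{wramseyfilterequivalence} — all relativized to $L$ — gives that $\kappa$ is $\omega$-Ramsey in $L$, completing the proof.
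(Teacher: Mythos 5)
Your overall frame --- shift the indiscernibles to obtain $j\colon L\to L$ with critical point $\kappa$, restrict to a $\kappa$-sized $M\prec H(\theta)^L$, and invoke the embedding characterization of $\omega$-Ramseyness inside $L$ --- is the right starting point, but the proposal misses the two points the actual proof turns on. First, $\kappa$-powerset preservation fails for a single Skolem hull: if $M=\mathrm{Hull}((\kappa+1)\cup J)$ and $N$ is the hull on the shifted generators (equivalently, the ultrapower of $M$ by the derived filter $U\cap M$, where $U=\{x\in\mathcal P(\kappa)^L\mid\kappa\in j(x)\}$), there is no reason why every subset of $\kappa$ in $N$ should lie in $M$; your claim that such a set is ``definable over $M$ from parameters below $\kappa$ together with indiscernibles, hence in $M$'' is a non sequitur, since it may be defined from indiscernibles (e.g.\ $j(\kappa)$, or elements of the shifted block) that do not generate $M$. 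By Lemma \ref{wapp_equivalence}, what is actually needed is weak amenability of $U\cap M$ for $M$, i.e.\ closure of $M$ under the operation $A\mapsto U\cap A$, and the proof obtains this by iteration: one builds an $\omega$-chain $M_0, M_1,\dots$ of hulls with $M_i,\,U\cap M_i\in M_{i+1}$ and takes the union.

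Second, and more fundamentally, you never address why the witnessing objects are elements of $L$. Each individual restriction $j\upharpoonright M_i$ lies in $L$ by Lemma \ref{ancientkunen}, but the set $J$ of indiscernibles, the filter $U$, and above all the $\omega$-sequence $\langle M_i,\ j\upharpoonright M_i\mid i<\omega\rangle$ are constructed in $V$ using $0^\sharp$; none of this machinery is available inside $L$ (indeed $U\notin L$, else $\kappa$ would be measurable in $L$), so one cannot simply ``perform the construction in $L$'' as you suggest. The decisive step your proposal lacks is the transfer: one defines \emph{in $L$} a tree of finite sequences of embeddings $h_i\colon M_i^*\to N_i^*$ satisfying the relevant coherence conditions (each $M_i^*\prec L_\theta$ a weak $\kappa$-model, critical point $\kappa$, and $M_i^*$ together with the filter derived from $h_i$ an element of $M_j^*$ for $j>i$), observes that the sequence built in $V$ from the restrictions of $j$ is a branch through this tree, and concludes by absoluteness of ill-foundedness that the tree has a branch in $L$, whose union yields the desired $\kappa$-powerset preserving embedding in $L$. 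A minor further point: $\omega$-Ramseyness only requires closure under ${<}\omega$-sequences, which is automatic, so your concern about arranging $\omega$-closure of $M$ in $L$ is moot; the real issues are the amenability and the transfer to $L$ just described.
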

\begin{proof}
  Let $I=\{i_\xi\mid\xi\in\On\}$ be the Silver indiscernibles, enumerated in increasing order. Fix a particular Silver indiscernible $\kappa$, let $\lambda=(\kappa^+)^L$, let $\theta=((2^\kappa)^+)^L$, and let $A$ be a subset of $\kappa$ in $L$. Define $j\colon I\to I$ by $j(i_\xi)=i_\xi$ for all $i_\xi<\kappa$ and $j(i_\xi)=i_{\xi+1}$ for all $i_\xi\ge\kappa$ in $I$. The map $j$ extends, via the Skolem functions, to an elementary embedding $j\colon L\to L$ with critical point $\kappa$. Let $U$ be the weakly amenable $L_\lambda$-normal filter on $\kappa$ generated by $j$. Since every $\alpha<\lambda$ has size $\kappa$ in $L_\lambda$, each $U\cap L_\alpha\in L_\lambda$ by weak amenability of $U$. Let $\langle M_i\mid i\in\omega\rangle$ be a sequence such that each $M_i\prec L_\theta$ is a weak $\kappa$-model in $L$, such that $A\in M_0$, and such that $M_i,U\cap M_i\in M_{i+1}$. For each $i<\omega$, let $j_i$ be the restriction of $j$ to $M_i$. Each $j_i\colon M_i\to j(M_i)$ has a domain of size $\kappa$ in $L_\theta$, and is hence an element of $L_{j(\theta)}\subseteq L$ by Lemma \ref{ancientkunen}.

To show that $\kappa$ is $\omega$-Ramsey in $L$, we need to construct in $L$, a weak $\kappa$-model $M^*\prec L_\theta$ containing $A$ as an element, and a $\kappa$-powerset preserving elementary embedding $j\colon M^*\to N^*$. Define in $L$, the tree $T$ of finite sequences of the form \[s=\langle h_0\colon M^*_0\to N^*_0,\ldots,h_n\colon M^*_n\to N^*_n\rangle\] ordered by extension and satisfying the following properties:
\begin{enumerate-(a)}
  \item $A\in M^*_0$, each $M^*_i\prec L_\theta$ is a weak $\kappa$-model,
  \item $h_i\colon M^*_i\to N^*_i$ is an elementary embedding with critical point $\kappa$,
  \item $N^*_i\subseteq L_{j(\theta)}$.
  \item[] Let $W_i$ be the $M^*_i$-normal filter on $\kappa$ generated by $h_i$.
  \item For $i<j\le n$, we have $M^*_i,W_i\in M^*_j$, $N^*_i\prec N^*_j$ and $h_j\supseteq h_i$.
\end{enumerate-(a)}
Consider the sequences $s_n=\langle j_0\colon M_0\to j(M_0),\ldots,j_n\colon M_n\to j(M_n)\rangle$. Each $s_n$ is clearly an element of $T$ and $\langle s_n\mid n\in\omega\rangle$ is a branch through $T$ in $V$. Hence the tree $T$ is ill-founded, and by absoluteness of this property, $T$ is ill-founded in $L$. Let $\langle h_i\colon M^*_i\to N^*_i\mid i\in\omega\rangle$ be a branch of $T$ in $L$, and let $W_i$ denote the $M^*_i$-normal filter on $\kappa$ induced by $h_i$. Let \[h=\bigcup_{i\in\omega}h_i,\ M^*=\bigcup_{i\in\omega}M^*_i\textrm{ and }N^*=\bigcup_{i\in\omega}N^*_i.\]%\textrm{ and }W=\bigcup_{i\in\omega}W_i.
It is clear that $M^*\prec L_\theta$, $h\colon M^*\to N^*$ is an elementary embedding with critical point $\kappa$ and that $M^*$ is a weak $\kappa$-model containing $A$ as an element. If $x\subseteq\kappa$ in $N^*$, then $x=[f]_{W_i}\in N^*_i$ for some $i<\omega$ and some $f\colon\kappa\to M^*_i$ in $M^*_i$. But then $x=\{\alpha<\kappa\mid\{\beta<\kappa\mid\alpha\in f(\beta)\}\in W_i\}\in M^*_{i+1}\subseteq M^*$ by Property (4). This shows that $h$ is $\kappa$-powerset preserving and thus that $\kappa$ has the $\omega$-filter property in $L$, as desired.
\end{proof}

To show that $\omega$-Ramsey cardinals are downwards absolute to $L$, we need yet another characterization of $\omega$-Ramsey cardinals.

\begin{lemma}\label{wRcofomegaclub}
$\kappa$ is $\omega$-Ramsey if and only if for arbitrarily large regular cardinals $\theta$ and every subset $C$ of $\theta$, every $A\subseteq\kappa$ is contained, as an element, in some weak $\kappa$-model $M$ such that $\langle M,C\rangle\prec\langle H(\theta),C\rangle$, %for which $\cof(M\cap\kappa^+)=\omega$,
and for which there exists a $\kappa$-powerset preserving elementary embedding $j\colon M\to N$.
\end{lemma}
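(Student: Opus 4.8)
The backward direction is trivial: taking $C=\emptyset$ immediately gives back the characterization of $\omega$-Ramseyness from Theorem \ref{modestequivalences} (or rather, the definition with $H(\theta)$ in place of the pair $\langle H(\theta),C\rangle$), so I only need to establish the forward direction. The plan is to mimic the argument used in Theorem \ref{modestequivalences} for the implication from (d) to (c), but carrying along an extra predicate $C$. Concretely, suppose $\kappa$ is $\omega$-Ramsey, fix a regular $\theta>\kappa$ and a subset $C\subseteq\theta$, and fix $A\subseteq\kappa$. I want a weak $\kappa$-model $M$ with $\langle M,C\rangle\prec\langle H(\theta),C\rangle$, containing $A$, admitting a $\kappa$-powerset preserving embedding.

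\textbf{Key steps.} First, I would pick a sufficiently large regular cardinal $\theta'\gg\theta$ so that the structure $\langle H(\theta'),\in,C,\theta\rangle$ sees $\theta$, $C$, $A$ and the relevant reflection facts, and apply $\omega$-Ramseyness of $\kappa$ (in the strong form of Theorem \ref{modestequivalences}(c), allowing parameters in $H(\theta')$, applied to the parameter coding the triple $(\theta,C,A)$) to obtain a weak $\kappa$-model $M'\prec H(\theta')$ with $\theta,C,A\in M'$ and a $\kappa$-powerset preserving $j'\colon M'\to N'$. Second, inside $M'$ I form $M:=M'\cap H(\theta)$; since $\theta\in M'$ and $M'\prec H(\theta')$, standard arguments give $M\prec H(\theta)$, and in fact $\langle M,C\cap M\rangle\prec\langle H(\theta),C\rangle$ because $C\in M'$ means $C\cap M = C\cap M'$ is the interpretation of the predicate ``$C$'' inside $M'$'s version of $H(\theta)$, so the Tarski--Vaught criterion relative to the predicate $C$ goes through using elementarity of $M'$ in $H(\theta')$. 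Third, I restrict the embedding: $j:=j'\restr M = j'\restr(H(\theta)^{M'})\colon H(\theta)^{M'}\to H(j'(\theta))^{N'}$; this is $\kappa$-powerset preserving because $j'$ is, and because $\mathcal P(\kappa)\cap H(\theta)^{M'} = \mathcal P(\kappa)\cap M'$ (as $\kappa<\theta$). Since $A\in M$ and $M$ is a weak $\kappa$-model with the required elementarity, this witnesses the desired property; finally, since $\theta$ was arbitrary regular and $\omega$-Ramseyness gives arbitrarily large such $\theta$, we are done.

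\textbf{Main obstacle.} The only genuinely delicate point is verifying that $\langle M,C\cap M\rangle\prec\langle H(\theta),C\rangle$ as structures in the language with a unary predicate for $C$ -- that is, that adding the predicate does not spoil the elementarity obtained from $M=M'\cap H(\theta)$. The resolution is that $C$ itself lies in $M'$, so for any first-order formula $\varphi$ in the expanded language, the statement ``$H(\theta)$ with predicate $C$ satisfies $\varphi(\bar a)$'' is expressible in $H(\theta')$ with parameters $\theta, C$, and hence reflects down through $M'\prec H(\theta')$ and then through $M=M'\cap H(\theta)$; one should just be slightly careful that $C\cap M$ is genuinely the restriction of $C$, which holds since $M\subseteq H(\theta)$. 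Everything else -- the existence of arbitrarily large $\theta$, the $\kappa$-powerset preservation of the restricted embedding, the weak $\kappa$-model status of $M$ -- is routine and parallels Theorem \ref{modestequivalences}.
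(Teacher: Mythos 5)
Your proof is correct, but it takes a genuinely different route from the paper's. The paper proves the forward direction by passing through the game characterization: by Corollary \ref{wramseyfilterequivalence}, $\kappa$ has the well-founded $\omega$-filter property, and one re-runs the argument of Theorem \ref{ramseyfilterequivalence} with the challenger constrained to play models $M_\gamma$ satisfying $\langle M_\gamma,C\rangle\prec\langle H(\theta),C\rangle$; the union of the resulting chain is then automatically elementary in the expanded structure, and the judge's filter supplies the $\kappa$-powerset preserving embedding. You instead stay entirely on the embedding side: you invoke Theorem \ref{modestequivalences}(c) at a larger $\theta'$ with the parameter $(\theta,C,A)$, cut the resulting $M'\prec H(\theta')$ down to $M=M'\cap H(\theta)=H(\theta)^{M'}$, and restrict the embedding to $j'\restr H(\theta)^{M'}\colon H(\theta)^{M'}\to H(j'(\theta))^{N'}$ -- exactly the maneuver used in the (d)$\Rightarrow$(c) step of Theorem \ref{modestequivalences}, enriched by the observation that $C\in M'$ makes satisfaction in $\langle H(\theta),C\rangle$ definable in $H(\theta')$, so the expanded elementarity reflects down. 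All the individual verifications you flag go through: $\mathcal P(\kappa)\cap H(\theta)^{M'}=\mathcal P(\kappa)\cap M'$ since $\kappa<\theta$, so the restricted embedding remains $\kappa$-powerset preserving, and $C\cap M'=C\cap M$ since $C\subseteq H(\theta)$. The paper's route gets the expanded elementarity for free (the challenger simply chooses such models) at the cost of routing through the filter games; yours avoids the games entirely and in fact establishes the conclusion for every regular $\theta>\kappa$ rather than merely arbitrarily large ones, at the cost of the (routine but nontrivial) reflection bookkeeping.
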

\begin{proof}
  The backward direction of the lemma is immediate. For the forward direction, assume that $\kappa$ is $\omega$-Ramsey, and let $\theta$ and $C$ be as in the statement of the lemma. By Corollary \ref{wramseyfilterequivalence}, $\kappa$ has the well-founded $\omega$-filter property. Now adapt the proof that the well-founded $\omega$-filter property implies $\omega$-Ramseyness, that is provided for Theorem \ref{ramseyfilterequivalence}. Namely, let the challenger simply play structures $M_\gamma$ which satisfy $\langle M_\gamma,C\rangle\prec\langle H(\theta),C\rangle$. Note that the resulting structure $M_\omega$ witnessing $\omega$-Ramseyness will satisfy $\langle M_\omega,C\rangle\prec\langle H(\theta),C\rangle$.% and moreover, as is immediate to observe from the proof in Theorem \ref{ramseyfilterequivalence}, that $\cof(M_\omega\cap\kappa^+)=\omega$.
\end{proof}

We are finally ready to show that $\omega$-Ramsey cardinals are downwards absolute to $L$.

\begin{theorem}\label{wramseyinl}
  $\omega$-Ramsey cardinals are downwards absolute to $L$. 
\end{theorem}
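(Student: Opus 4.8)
The plan is to run an argument parallel to Lemma \ref{SilverOmegaRamsey}, but now under the hypothesis that $\kappa$ is $\omega$-Ramsey in $V$ (rather than a Silver indiscernible), and to deduce that $\kappa$ retains the $\omega$-filter property in $L$. First I would observe that if $0^\sharp$ exists then there is nothing to do: $\kappa$ being $\omega$-Ramsey is witnessed in $V$, but more importantly, if $0^\sharp$ exists we do not need $\kappa$ to be a Silver indiscernible --- however this would not give the full theorem, so instead I would argue directly. Assume $\kappa$ is $\omega$-Ramsey in $V$ and fix a regular $\theta$; set $\lambda=(\kappa^+)^L$ and work towards showing that $\kappa$ has the $\omega$-filter property in $L$ (for all large $\theta$ computed in $L$), which by Corollary \ref{wramseyfilterequivalence} gives $\omega$-Ramseyness in $L$.

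The key step is to produce, for a given $A\subseteq\kappa$ in $L$, an $\omega$-chain of weak $\kappa$-models $\langle M_i\mid i\in\omega\rangle$ with $M_i\prec L_\theta$, together with a weakly amenable filter. In $V$, since $\kappa$ is $\omega$-Ramsey, I would use Lemma \ref{wRcofomegaclub} with $C$ a predicate coding $L_\theta$ (or coding a well-order of $L_\theta$) to obtain a weak $\kappa$-model $M\prec H(\theta^+)$ with $\langle M,C\rangle\prec\langle H(\theta^+),C\rangle$ and a $\kappa$-powerset preserving $j\colon M\to N$; the point of threading $C$ through is to guarantee that $j$ restricts to an embedding of $L_\theta^M$-like structures and that the relevant filter $U=\{B\in\mathcal P(\kappa)^M\mid\kappa\in j(B)\}$ is weakly amenable and, by Lemma \ref{wapp_equivalence}, induces a well-founded ultrapower. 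Since each initial piece has size $\kappa$ and $\kappa+1$ is contained in everything, the ancient Kunen Lemma \ref{ancientkunen} shows that each restriction $j_i = j\restriction M_i$ lies in $L$ (indeed in $L_{j(\theta)}$). Now I would set up, exactly as in Lemma \ref{SilverOmegaRamsey}, the tree $T\in L$ of finite approximations $\langle h_0\colon M^*_0\to N^*_0,\ldots,h_n\colon M^*_n\to N^*_n\rangle$ satisfying conditions (a)--(4) of that proof (each $M^*_i\prec L_\theta$ a weak $\kappa$-model, $A\in M^*_0$, $h_i$ elementary with critical point $\kappa$, $N^*_i\subseteq L_{j(\theta)}$, coherence of the induced filters). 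The sequences $s_n=\langle j_0,\ldots,j_n\rangle$ witness that $T$ is ill-founded in $V$, hence ill-founded in $L$ by absoluteness of ill-foundedness, and any $L$-branch yields $h=\bigcup h_i\colon M^*\to N^*$ with $M^*=\bigcup M^*_i$ a weak $\kappa$-model containing $A$, $M^*\prec L_\theta$, and --- by condition (4), via the standard computation $x=\{\alpha<\kappa\mid\{\beta<\kappa\mid\alpha\in f(\beta)\}\in W_i\}\in M^*_{i+1}$ for $x=[f]_{W_i}\subseteq\kappa$ in $N^*$ --- the embedding $h$ is $\kappa$-powerset preserving. This witnesses the $\omega$-filter property (equivalently, $\omega$-Ramseyness via Corollary \ref{wramseyfilterequivalence}) of $\kappa$ in $L$ for this $A$ and $\theta$.

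The main obstacle I expect is the passage from "$\kappa$ is $\omega$-Ramsey in $V$" to having a genuine elementary embedding $j\colon L\to L$ (or at least $j\colon L_\theta^M\to L_{j(\theta)}$) in hand, as opposed to merely an embedding of some weak $\kappa$-model: in Lemma \ref{SilverOmegaRamsey} this came for free from $0^\sharp$ and the indiscernibles, whereas here the witnessing embedding from $\omega$-Ramseyness has a small non-transitive domain $M$, so I must verify that $j$ restricted appropriately acts on $L$-levels and that the induced filter $U$ really is weakly amenable for the $L$-hierarchy (so that each $U\cap L_\alpha\in L$ for $\alpha<\lambda$). This is precisely what the extra "$C$-elementarity" strengthening in Lemma \ref{wRcofomegaclub} is designed to repair: by letting $C$ code (a well-order of) $L_\theta$ we force $M$ to compute $L$-levels correctly, so that $L_\theta^M = L_\theta$-segment honestly and $j$ moves it into $L_{j(\theta)}$; the remaining bookkeeping (sizes, $\kappa+1\subseteq$ everything, and absoluteness of well-foundedness/ill-foundedness downward to $L$) is then routine, following \cite[Theorem 3.4]{MR2830435} and the template of Lemma \ref{SilverOmegaRamsey}.
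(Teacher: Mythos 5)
Your overall architecture is the same as the paper's: use Lemma \ref{wRcofomegaclub} to obtain a witnessing model with extra elementarity relative to a predicate, restrict the embedding to get countably many maps $j_i$ lying in $L$ via Lemma \ref{ancientkunen}, and then run the tree-of-finite-approximations argument from Lemma \ref{SilverOmegaRamsey} together with absoluteness of ill-foundedness. But there is a genuine gap at the central point, which you partly sense in your last paragraph but do not repair. For the sequences $s_n=\langle j_0,\dots,j_n\rangle$ to be nodes of the tree $T\in L$, you need not only $j_i\in L$ but also $M_i,\,U\cap M_i\in M_{i+1}$, where the $M_i$ are weak $\kappa$-models \emph{in $L$} elementary in $L_\theta$; in particular the traces $U\cap M_i$ must be elements of $L$. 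Weak amenability of $U$ for $M$ (which is all that $\kappa$-powerset preservation of $j\colon M\to N$ gives you) only puts $U\cap M_i$ into $M$, not into $L$. Your proposed mechanism --- letting $C$ code a well-order of $L_\theta$ so that ``$M$ computes $L$-levels correctly'' --- does not address this: $M\prec H(\theta)$ already computes the $L$-hierarchy correctly by elementarity, and that says nothing about whether $U\cap L_\alpha$ or $U\cap M_i$ lands in $L$.

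The paper closes this gap with three ingredients you omit. First, a case split on $0^\sharp$: if $0^\sharp$ exists, then every uncountable cardinal of $V$ is a Silver indiscernible, so Lemma \ref{SilverOmegaRamsey} already yields the full conclusion for $\kappa$ (your remark that this ``would not give the full theorem'' is mistaken, and avoiding the split is where your trouble begins). Second, in the remaining case $(\kappa^+)^L=\kappa^+$ by Kunen's observation for weakly compact cardinals; this is what guarantees that every relevant $L$-level, and every $M_i\prec L_\theta$ of size $\kappa$, has size $\kappa$ inside $L^M$, so that weak amenability is even applicable. Third --- the key step --- one checks that the restricted embedding $j\upharpoonright L^M\colon L^M\to L^N$ is itself $\kappa$-powerset preserving (since $L_{\kappa^+}^M=L_{\kappa^+}^N$), and applies Lemma \ref{wapp_equivalence} to \emph{this} restriction to conclude that $U$ is weakly amenable for $L^M$, whence $U\cap M_i\in L^M\subseteq L$. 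The predicate $C$ then serves a more modest purpose than the one you assign it: it is the club of $\gamma<\theta$ with $L_\gamma\prec L_\theta$, used (together with arranging $\cof(M\cap\kappa^+)=\omega$) to choose the chain $M_i\prec L_{\xi_i}\prec L_\theta$ as elements of $L^M$. Without these three points your ``direct argument'' breaks exactly where you anticipated trouble.
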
 
\begin{proof} 
  Let $\kappa$ be an $\omega$-Ramsey cardinal. By Lemma \ref{SilverOmegaRamsey}, we may assume that $0^\sharp$ does not exist, and thus that $(\kappa^+)^L=\kappa^+$ by a classic observation of Kunen for weakly compact cardinals (see e.g.\ \cite[Exercise 18.6]{MR1940513}). Fix $A\subseteq\kappa$ in $L$, and a regular cardinal $\theta\ge(2^\kappa)^+$. Let $C\subseteq\theta$ be the club of $\gamma<\theta$ for which $L_\gamma\prec L_\theta$. Using Lemma \ref{wRcofomegaclub}, we may pick a weak $\kappa$-model $M$ such that $\langle M,C\rangle\prec\langle  H(\theta),C\rangle$, containing $A$ as an element, with a $\kappa$-powerset preserving elementary embedding $j\colon M\to N$, such that $\cof(M\cap\kappa^+)=\omega$.

Let $\lambda=\kappa^+$ and let $\bar\lambda=M\cap\kappa^+=L^M\cap\kappa^+$ and note that $\cof(\bar\lambda)=\omega$ by the above. Restrict $j$ to $j\colon L^M\to L^N$. It is easy to see that $\kappa$-powerset preservation of the original embedding $j$ implies that $L_{\kappa^+}^M=L_{\kappa^+}^N$, and hence that the restricted embedding $j$ is again $\kappa$-powerset preserving. Moreover $L^M\prec L^{H(\theta)}=L_\theta=H(\theta)^L$. 

Let $U$ be the weakly amenable $L_\lambda^M$-normal filter on $\kappa$ generated by $j$. Since every $\alpha<\lambda$ in $M$ has size $\kappa$ in $L^M$, each $U\cap L_\alpha\in L_\lambda^M$ by weak amenability of $U$. Using that %$\cof(\bar\lambda)=\omega$ and that
$L_\lambda^M=L_{\bar\lambda}$, construct a sequence $\langle M_i\mid i\in\omega\rangle$ %and $\langle\lambda_i\mid i\in\omega\rangle$
such that each $M_i\prec L^M$ is a weak $\kappa$-model in $L^M$% with $M_i\cap\lambda=\lambda_i$, such that the $\lambda_i$ are cofinal in $\bar\lambda$
, such that $A\in M_0$, and such that $M_i,U\cap M_i\in M_{i+1}$. Note that we can achieve $M_i\in L^M$ since $C$ is unbounded in $M\cap\theta$ by elementarity, by picking first -- externally -- a sufficiently large $\xi_i\in M\cap C$, and then picking $M_i\prec L_{\xi_i}^M$ in $L^M$ in each step $i$ of our construction.

For each $i<\omega$, let $j_i$ be the restriction of $j$ to $M_i$. Each $j_i\colon M_i\to j(M_i)$ has a domain of size $\kappa$ in $L^M$, and is hence an element of $L^N$ by Lemma \ref{ancientkunen}. Moreover since $L$ is $\Delta_1^{\ZF^-}$-definable, $L^N\subseteq L$, hence $j_i\in L$ for every $i<\omega$.

To show that $\kappa$ is $\omega$-Ramsey in $L$, we need to construct in $L$, a weak $\kappa$-model $M^*\prec L_\theta$ containing $A$ as an element, and a $\kappa$-powerset preserving elementary embedding $j\colon M^*\to N^*$. In order to do so, we now continue verbatim as in the proof of Lemma \ref{SilverOmegaRamsey}.
\end{proof} 

%\todo[inline]{WE DON'T USE WHAT IS REFERED TO HERE ANYMORE, BUT PERHAPS IT TURNS OUT WE NEED IT EVENTUALLY, SO I LEFT IT FOR NOW. I was missing why $0^\#$ exists, if $\kappa$ is weakly compact with $\kappa^{+L}<\kappa^+$. This is an 'observation' of Kunen [Schimmerling's handbook article, before Theorem 3.3]. \ %\\ 
%To prove this,  suppose that $\kappa$ has this property. Let $\kappa^{+L}=\alpha$. Let $L_\beta$ be a $\kappa$-model with $\alpha<\beta$ and $j\colon L_\beta \rightarrow L_\gamma$ elementary with $\mathrm{crit}(j)=\kappa$, since $\kappa$ is weakly compact. Then $0^\#$ exists by the characterization (b) of $0^\#$ in [Theorem 21.1, Kanamori].} 

%%%%%%%%%%%%%%%%%%
%%%%%%%%%%%%%%%%%%
\section{The strategic filter property versus measurability}\label{section:measurability}

Note that we have not only introduced the $\gamma$-filter properties, but also the strategic $\gamma$-filter properties in Definition \ref{def:filterproperties}. While we have already provided a variety of results about the $\gamma$-filter properties, we do not know a lot about their strategic counterparts. However we want to close our paper with the following result, that was suggested to us by Joel Hamkins. We originally had a similar result, however with a much more complicated proof, starting from a much stronger large cardinal hypothesis. We would like to thank Joel Hamkins for letting us include his proof here.

\begin{definition}
  A cardinal $\kappa$ is \emph{$\lambda$-tall} if there is an embedding $j\colon V\to M$ with critical point $\kappa$ such that $j(\kappa)>\lambda$ and $M^\kappa\subseteq M$.
\end{definition}

\begin{proposition}[Hamkins]\label{separation}
 Starting from a $\kappa^{++}$-tall cardinal $\kappa$, it is consistent that there is a cardinal $\kappa$ with the strategic $\kappa^+$-filter property, however $\kappa$ is not measurable.
\end{proposition}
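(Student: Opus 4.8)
The strategy is the standard Kunen-style argument for separating a strategic/game-theoretic large cardinal property from measurability by destroying all normal measures via forcing while keeping enough tallness to run the game. First I would start with a $\kappa^{++}$-tall cardinal $\kappa$, and by the usual preparatory forcing (a Laver-style or Hamkins lottery preparation, or simply noting that $\kappa^{++}$-tallness can be arranged to be indestructible by ${\le}\kappa$-directed closed forcing of the relevant kind) I would assume that $\kappa$ is $\kappa^{++}$-tall in a way that survives the forcing below. Then I would force with $\Add(\kappa,\kappa^{++})$, the poset to add $\kappa^{++}$ Cohen subsets of $\kappa$; this is ${<}\kappa$-closed and $\kappa^+$-cc, so it preserves cardinals and cofinalities, preserves $\kappa=\kappa^{<\kappa}$, and (by a theorem going back to Kunen, cf.\ the standard fact that adding $\kappa^{++}$ Cohen subsets of a measurable $\kappa$ kills measurability while a lifting argument would need $2^\kappa=\kappa^+$ in the target) destroys the measurability of $\kappa$: any normal measure in the extension would have to concentrate on a set decided by an initial segment of the generic, and a counting argument shows no ${<}\kappa$-complete ultrafilter on $\kappa$ can survive. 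So in the extension $V[G]$, $\kappa$ is not measurable.

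Next I would verify that $\kappa$ retains the strategic $\kappa^+$-filter property in $V[G]$. For this I use that $\kappa$ is still $\kappa^{++}$-tall in $V[G]$: lift the tall embedding $j\colon V\to M$ with $j(\kappa)>\kappa^{++}$, $M^\kappa\subseteq M$, to $j\colon V[G]\to M[j(G)]$ by the usual argument, using that $\Add(\kappa,\kappa^{++})^{V[G]}$-generics over $M$ can be constructed since $M$ is sufficiently closed and $j``G$ generates a condition (the tail forcing $j(\Add(\kappa,\kappa^{++}))/G$ is highly closed in $M[j(G)]$ and has a small enough antichain count to build the generic in $V[G]$). The key point is that $j(\kappa)>\kappa^{++}\ge (2^\kappa)^{V[G]}$, so the lifted embedding $j\colon V[G]\to M[j(G)]$ induces a weakly amenable, countably complete (indeed $\kappa$-complete) $V[G]$-measure on $\kappa$ — call it $W$ — which in particular gives a ${<}\kappa$-complete normal filter on $\kappa$ measuring \emph{all} of $\mathcal P(\kappa)^{V[G]}$. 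Now in any run of $G^\theta_{\kappa^+}(\kappa)$, no matter what $\kappa$-models $M_\alpha\prec H(\theta)$ the challenger plays, the judge simply answers $F_\alpha = W\cap M_\alpha$ (intersecting with $M_\alpha$ to stay inside the game's format, or $F_\alpha = W$ directly in the unrestricted game — by Lemma \ref{barequivalence} this does not matter). Since $W$ is a genuine normal ${<}\kappa$-complete ultrafilter on all of $\mathcal P(\kappa)$, the increasing union $F_{\kappa^+}=\bigcup_{\alpha<\kappa^+}F_\alpha$ is $M_{\kappa^+}$-normal (it is just $W$ restricted to $M_{\kappa^+}$), so the judge wins. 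Hence the judge has a winning strategy, i.e.\ $\kappa$ has the strategic $\kappa^+$-filter property in $V[G]$.

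The main obstacle is the lifting step: one must check carefully that the tall embedding lifts through $\Add(\kappa,\kappa^{++})$ in $V[G]$, which requires either assuming the $\kappa^{++}$-tallness is suitably indestructible from the start (arranged by a preliminary preparation forcing, as in Hamkins's work on tall cardinals) or doing the lift directly using the closure of $M$ and a diagonalization to build the $M$-generic for the image forcing — the subtlety being that $\Add(\kappa,\kappa^{++})$ itself is not ${<}\kappa^+$-closed, so one cannot naively use $\kappa$-closure of $M$ to thread the generic, and instead one exploits that the image poset's relevant tail is ${<}j(\kappa)$-closed in $M$ together with the $\kappa^+$-cc to count the maximal antichains. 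A secondary point to get right is that once we have the full normal measure $W$ on $\mathcal P(\kappa)^{V[G]}$, the winning condition for the judge (that $F_{\kappa^+}$ be $M_{\kappa^+}$-normal, including that diagonal intersections of $\kappa$-sequences from $M_{\kappa^+}$ land in the filter) is immediate because $W$ is genuinely normal — no cleverness in the judge's play is needed beyond "always play (a piece of) $W$". Everything else — cardinal preservation, $\kappa=\kappa^{<\kappa}$ in the extension, and non-measurability — is routine.
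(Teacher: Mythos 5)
There is a genuine gap here, and it takes the form of an internal contradiction rather than a missing technical detail. In the second half of your argument you lift a tall embedding to $j\colon V[G]\to M[j(G)]$ and extract from it a filter $W$ that you yourself describe as a genuine normal ${<}\kappa$-complete ultrafilter on all of $\mathcal P(\kappa)^{V[G]}$. If such a $W$ exists in $V[G]$, then $\kappa$ is measurable in $V[G]$ by definition, so the first half of your argument, in which $\Add{\kappa}{\kappa^{++}}$ is supposed to have destroyed measurability, cannot also hold. The two halves cannot coexist: either the lift goes through and $\kappa$ stays measurable (this is in fact what Woodin's theorem from a $\kappa^{++}$-tall cardinal gives after a suitable preparation, namely $\kappa$ measurable with $2^\kappa=\kappa^{++}$), or measurability really is destroyed and then no total measure is available in $V[G]$ for the judge to play from. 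Once $W$ is not an element of $V[G]$, your judge has no strategy, so the proposal does not establish the strategic $\kappa^+$-filter property in a model where $\kappa$ is non-measurable; as written it proves too much and hence nothing.

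The idea you are missing is to place the measure not in the final model but in a further, sufficiently closed forcing extension, and to have the judge play with a \emph{name} for it. The paper's argument first uses Woodin's theorem to reach a model where $\kappa$ is measurable and $\GCH$ fails at $\kappa$, then forces $\GCH$ below $\kappa$ by a reverse Easton iteration; the $\Pi^2_1$-indescribability of measurable cardinals shows $\kappa$ is not measurable in the result (otherwise the failure of $\GCH$ at $\kappa$ would reflect below $\kappa$), yet measurability is resurrected by the ${<}\kappa^+$-closed forcing $\Add{\kappa^+}{1}$. Since that forcing adds no new subsets of $\kappa$ and no new $\kappa$-models, the judge wins $G^\theta_{\kappa^+}(\kappa)$ by descending through conditions $p_\alpha$ deciding $\dot U\cap\check M_\alpha=\check F_\alpha$ for a name $\dot U$ of the resurrected measure; each decided piece $F_\alpha$ lies in the ground model and their union is $M_{\kappa^+}$-normal. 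Note that this is only compatible with non-measurability because $2^\kappa>\kappa^+$ in that model, so the $\kappa^+$-length game never accumulates a full ultrafilter; your plan, by contrast, requires the full ultrafilter to be present from the start.
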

\begin{proof}
  By an unpublished result of Woodin (see \cite[Theorem 1.2]{MR2489293}), if $\kappa$ is $\kappa^{++}$-tall, then there is a forcing extension in which $\kappa$ is measurable and the $\GCH$ fails at $\kappa$ (this improves a classic result of Silver, where the same is shown under the assumption of a $\kappa^{++}$-supercompact cardinal). Now we may perform the standard reverse Easton iteration of length $\kappa$, to force the $\GCH$ below $\kappa$, in each step adding a Cohen subset to the least successor cardinal (of the current intermediate model) which has not been considered in the iteration so far. By the $\Pi^2_1$-indescribability of measurable cardinals, $\kappa$ can not be measurable in the resulting model, since if it were, the failure of the $\GCH$ at $\kappa$ would reflect below $\kappa$. But clearly, the measurability of $\kappa$ is resurrected after adding a Cohen subset to $\kappa^+$, by standard lifting arguments. 
  
\medskip

Assume that $\kappa$ is not measurable, but is so in a further $\Add{\kappa^+}{1}$-generic extension (we may assume this situation starting from a $\kappa^{++}$-tall cardinal by the above). Let $\dot U$ be an $\Add{\kappa^+}{1}$-name for a measurable filter on $\kappa$. Let $\theta>\kappa$ be a regular cardinal. We define a strategy for the judge in $G_{\kappa^+}^\theta(\kappa)$ as follows. Provided the challenger plays some $\kappa$-model $M_\alpha\prec H(\theta)$, the judge picks a condition $p_\alpha$ deciding $\dot U\cap\check M_\alpha=\check F_\alpha$, and then plays $F_\alpha$. She does this so that $\langle p_\alpha\mid\alpha<\kappa^+\rangle$ forms a decreasing sequence of conditions. Let $M$ denote the union of the models played by the challenger. $F=\bigcup_{\alpha<\kappa^+}F_\alpha$ is then an $M$-normal filter in the ground model. This shows that $\kappa$ has the strategic $\kappa^+$-filter property. 
\end{proof}

\section{Some Questions}\label{questions}

The following collection of questions originates from the first submitted version of our paper. Since we first circulated that version of our paper, most of these questions have been answered by very recent results of Victoria Gitman, Dan Nielsen and Philip Welch. Only Questions \ref{unanswered1} and \ref{unanswered2} remain unanswered. We would still like present our original questions in this section, including our remarks from before we learned about their answers. Those answers will then be presented in Section \ref{answers}.

\medskip

While for uncountable cardinals $\alpha$, we obtained a direct correspondence between $\alpha$-Ramseyness and the $\alpha$-filter property, the issue of potential ill-foundedness forced us to introduce the concept of the well-founded $\omega$-filter property, in order to characterize $\omega$-Ramseyness in terms of filter games. The following should have a negative answer.

\begin{question}\label{wfomegafilterproperty}
  Does the $\omega$-filter property imply the well-founded $\omega$-filter property?
\end{question}

We would expect the filter games $G_\gamma^\theta(\kappa)$ from Section \ref{filtergames} not to be determined in case $\gamma$ is an uncountable cardinal, 
%(note that these are open games, so they will be determined in case $\gamma=\omega$), 
and ask the following question, for which we expect a negative answer.

\begin{question}\label{questiondeterminacy} 
  If $\gamma$ is an uncountable cardinal and the challenger does not have a winning strategy in the game $G_\gamma^\theta(\kappa)$, does it follow that the judge has one?
\end{question}

Our definitions allow for many variations, some of which we have partially studied, and some of which we haven't yet looked at at all.

\begin{question}\label{unanswered1}
  What properties does one obtain by considering variants of the games $G_\gamma^\theta(\kappa)$, where rather than $M$-normal filters for $\kappa$-models $M\prec H(\theta)$, we consider either
\begin{itemize} 
\item
${<}\kappa$-complete filters on subsets of $\mathcal P(\kappa)$ of size $\kappa$,
\item
$M$-normal filters for arbitrary $\kappa$-models $M$, weak $\kappa$-models $M$, or
\item 
normal filters on subsets of $\mathcal P(\kappa)$ of size $\kappa$?
\end{itemize} 
\end{question}

We showed in Theorem \ref{wramseyinl} that $\omega$-Ramsey cardinals are downwards absolute to $L$, and a positive answer seems highly likely for the following.

\begin{question}\label{alphaRamseyDJ}
  If $\omega\le\alpha\le\kappa$, are $\alpha$-Ramsey cardinals downwards absolute to the Dodd-Jensen core model?
\end{question}

What is the relationship between $\omega$-Ramsey cardinals and other cardinals that are compatible with $L$? For example:

\begin{question} \label{2iterability}
  Does $2$-iterability imply $\omega$-Ramseyness, or conversely?
\end{question}

\begin{comment}
\begin{question} 
Can we generalize remarkable cardinals and prove analogues to Theorem 4.8 in Gitman-Welch: Ramsey-like cardinals II. 
\end{question}

\begin{question} 
\begin{enumerate-(a)} 
\item 
Is the $\kappa$-filter property (the $\lambda$-filter property) preserved by an Easton iteration adding Cohen subsets? 
\item 
Is the strategic $\kappa$-filter property (the $\lambda$-filter property) preserved by an Easton iteration adding Cohen subsets? 
\item 
If it is consistent that the $\kappa$-filter property is preserved, but not the strategic $\kappa$-filter property, then we can separate the properties. For example, force over the Dodd-Jensen core model. 
\end{enumerate-(a)} 
\end{question} 
\end{comment}

A direction of possible research that we have not looked into so far at all is the following.

\begin{question}\label{unanswered2} 
  The notions of Ramsey-like cardinals are connected to measurable cardinals in talking about filters on $\kappa$. Can we obtain interesting variants of other filter-based large cardinals, for example supercompact cardinals, in a similar way? Do they have similar connections to generalized filter games?
\end{question} 

%\begin{question} 
%Consider various $1$-step filter extension properties similar to strongly compact, and variants for normal filters. Eg 
%\begin{itemize} 
%\item 
%Every $<\kappa$-complete filter of size $\leq\kappa$ can be extended to a $\kappa$-complete ultrafilter. Is it stronger than measurable? 
%\end{itemize} 
%\end{question} 

Proposition \ref{separation} shows that the strategic $\kappa$-filter property does not imply that $\kappa$ is measurable, and we expect the following question to have a negative answer.

\begin{question}\label{strategicconsistencystrength}
Does $\kappa$ having the strategic $\kappa$-filter property have the consistency strength of a measurable cardinal?
\end{question}

%\begin{question} 
%A natural notion similar to the existence of a $\kappa$-model with embedding is: for every finite fragment of $\mathsf{ZFC}$, there are $<\kappa$-closed models $M,N$ of this theory and am elementary embedding $j\colon M\rightarrow N$ with $\mathrm{crit}(j)=\kappa$. 
%With this definition, maybe the property has exactly the strength of a Mahlo cardinal. 
%\end{question} 

%\begin{question} 
%  ?? Does the variant of the $M$-normal filter extension property, where only $M\prec H_{\kappa^+}$ are considered, provably fail for all uncountable cardinals $\kappa$? 
%\end{question} 

%Suppose that $\kappa$ is weakly compact. Is there is a $\kappa$-model $M$ and an $M$-normal $M$-ultrafilter on $M$ that is not normal? \ \\ 
%The set of stationary subsets of $\kappa$ is not $\Sigma_1$-definable over $H_{\kappa^+}$, if $\kappa$ is weakly compact, by a [cannot find it. cite in paper with Ralf and Philipp] result of Friedman and Wu. 

\section{Final Remarks}\label{answers}
Many of our open questions have very recently been answered. Gitman showed that the well-founded $\omega$-filter property is strictly stronger than the $\omega$-filter property, thus answering our Question \ref{wfomegafilterproperty}. We would like to thank her for letting us include her proof here.

\begin{proposition}[Gitman]\label{victoriaomegafilterproperty}
  If $\kappa$ is $\omega$-Ramsey, then $\kappa$ is a limit of cardinals with the $\omega$-filter property.
\end{proposition}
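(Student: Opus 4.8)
The plan is to run essentially the same argument that establishes Theorem~\ref{filterprophierarchy}, but pushed down to the level where a $\kappa$-powerset preserving embedding with a well-founded, weakly amenable filter is available, yet where closure under $\omega$-sequences of the relevant submodels is \emph{not} automatically guaranteed. Concretely, fix a regular cardinal $\theta > \kappa$ and, using that $\kappa$ is $\omega$-Ramsey, pick a weak $\kappa$-model $M \prec H(\theta)$ closed under $\omega$-sequences, together with a $\kappa$-powerset preserving elementary embedding $j\colon M \to N$; by Lemma~\ref{wapp_equivalence}~(2) this induces a weakly amenable $M$-normal filter $F$ whose ultrapower of $M$ is well-founded, and by $\kappa$-powerset preservation $F$ is also weakly amenable for $N$ and $N$-normal. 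As usual we may assume $N$ is transitive, and since $j$ is an ultrapower embedding, $N$ is closed under $\omega$-sequences as well.

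The key step is to verify that $\kappa$ has the \emph{plain} (not well-founded) $\omega$-filter property inside $N$: given any regular $\nu > \kappa$ of $N$, I claim the challenger has no winning strategy in $G_\omega^\nu(\kappa)$ as computed in $N$. Suppose toward a contradiction he has one, and let him follow it; each time he plays a $\kappa$-model $X \prec H(\nu)^N$, let the judge respond with $F \cap X$, which lies in $N$ by weak amenability of $F$ for $N$. Since $N$ is closed under $\omega$-sequences, the entire resulting run is an element of $N$, and one checks (exactly as in the proof of Theorem~\ref{filterprophierarchy}, but now the ultrapower clause is \emph{absent} from the winning condition, so there is nothing to verify about well-foundedness) that the union filter $F_\omega = \bigcup_{n<\omega} F_n$ is an $M_\omega$-normal filter, where $M_\omega = \bigcup_{n<\omega} X_n$. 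Indeed $F_\omega \subseteq F$ is a filter on $\kappa$, it measures $\mathcal P(\kappa) \cap M_\omega$ since each $F_n$ measures $\mathcal P(\kappa) \cap X_n$, and $M_\omega$-normality of $F_\omega$ follows from $N$-normality of $F$ together with the fact that any sequence $\vec Y \in M_\omega$ of elements of $F_\omega$ already lies in some $X_n$. So the judge wins, contradicting that the challenger was following a winning strategy. Hence $\kappa$ has the $(\omega,\nu)$-filter property in $N$ for every regular $\nu>\kappa$ of $N$, i.e.\ $\kappa$ has the $\omega$-filter property in $N$.

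Finally, by elementarity of $j$ and since $\operatorname{crit}(j)=\kappa$, the statement ``$\kappa$ is a cardinal with the $\omega$-filter property'' reflects: $N \models$ ``there is an $\omega$-Ramsey cardinal'' is false in general, but what we have is that $N \models$ ``$\kappa$ has the $\omega$-filter property'', and since $j(\kappa) > \kappa$, by elementarity $M \models$ ``there is a cardinal below $j^{-1}(\,\cdot\,)$\ldots''---more precisely, $N \models$ ``$\kappa$ has the $\omega$-filter property and $\kappa < j(\kappa)$'', so $M \models$ ``there is a cardinal $\bar\kappa < \kappa$'' is \emph{not} what we want; rather, working in $N$ below $j(\kappa)$ we see that $\kappa$ is such a cardinal strictly below $j(\kappa)$, so by elementarity $M \models$ ``there is a cardinal with the $\omega$-filter property below $\kappa$''---wait, this needs the reflection to be set up via the \emph{target}. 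The clean way: since $N \models$ ``$\kappa$ is a cardinal with the $\omega$-filter property'' and $\kappa < j(\kappa) = j(\kappa)$, and $j$ maps $\kappa$ to $j(\kappa)$, elementarity gives $M \models$ ``there is a cardinal with the $\omega$-filter property below $\kappa$'', hence the same holds in $H(\theta)$ and so in $V$; since $\theta$ was arbitrary, $\kappa$ is a limit of cardinals with the $\omega$-filter property. \emph{The main obstacle} I anticipate is exactly this reflection bookkeeping: one must be careful that ``having the $\omega$-filter property'' is expressible and absolute enough between $N$, $M$, $H(\theta)$ and $V$---this is fine because, as in Theorem~\ref{modestequivalences}, it suffices to consider transitive ultrapower targets of size $\kappa$, so the property is a statement about $H(\kappa^+)$, and $\kappa$-powerset preservation of $j$ (giving $H(\kappa^+)^M = H(\kappa^+)^N$) together with $M \prec H(\theta)$ secures all the needed absoluteness.
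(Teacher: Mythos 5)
There is a genuine gap, and it sits exactly at the point where the $\omega$ case differs from the argument of Theorem~\ref{filterprophierarchy}. By Definition~\ref{alphaRamsey}, $\omega$-Ramseyness only provides a weak $\kappa$-model $M\prec H(\theta)$ closed under ${<}\omega$-sequences, i.e.\ under finite sequences, which is no closure at all; you assert instead that $M$ is closed under $\omega$-sequences, which is the hypothesis of $\omega_1$-Ramseyness. Consequently your claim that the ultrapower $N$ is closed under $\omega$-sequences is unjustified, and the pivotal step --- ``since $N$ is closed under $\omega$-sequences, the entire resulting run is an element of $N$'' --- fails. This matters: to refute a strategy $\sigma\in N$ for the challenger in $N$'s version of $G_\omega^\nu(\kappa)$, you must exhibit a losing run for $\sigma$ \emph{inside} $N$; a run built externally in $V$ by having the judge play $F\cap X$ says nothing about whether $\sigma$ is winning from $N$'s point of view. (If $M$ really were closed under $\omega$-sequences, $\kappa$ would already be Ramsey by Proposition~\ref{w1RR}, so the situation you describe is strictly stronger than the hypothesis.)

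The paper's proof repairs exactly this: one forms, in $N$, the tree of \emph{finite} partial runs in which the challenger follows $\sigma$, ordered by end-extension. The judge's responses $F\cap X$ produce a branch through this tree in $V$ (each finite initial segment lies in $N$ by weak amenability of $F$ for $N$; no $\omega$-closure is needed for that), so the tree is ill-founded in $V$, hence ill-founded in $N$ by absoluteness of well-foundedness. A branch in $N$ is a full run in $N$ won by the judge --- note that the winning condition for the plain (non-well-founded) $\omega$-filter property is checkable from the run itself, so nothing further is required. Your closing reflection discussion, though meandering, lands on the correct mechanism ($N\models$ ``$\kappa$ has the $\omega$-filter property'' plus elementarity of $j$), so the only essential missing ingredient is the tree/absoluteness argument replacing the unavailable $\omega$-closure of $N$.
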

\begin{proof} 
  Making use of the $\omega$-Ramseyness of $\kappa$, let $j\colon M\to N$ with $M\prec H(\theta)$ for some regular $\theta$ be induced by the weakly amenable $M$-normal filter $U$ on $\kappa$, so that $j$ is $\kappa$-powerset preserving. We want to argue that $\kappa$ has the $\omega$-filter property in $N$, thus yielding the statement of the proposition by elementarity. Fix a regular $N$-cardinal $\nu>\kappa$, and fix a strategy $\sigma\in N$ for the challenger, in the game $G_\omega^\theta(\kappa)$ of $N$. Consider the tree with nodes being finite sequences of valid moves in this game in which the challenger follows his strategy $\sigma$, with nodes ordered by end-extension. Using that $j$ is $\kappa$-powerset preserving, $U$ is weakly amenable for $N$, hence this tree has a branch in $V$, generated by the judge playing intersections of $U$ with the models played by the challenger. But by absoluteness of well-foundedness, this tree has a branch in $N$, yielding that there is a run of this game in $N$ which will be won by the judge, and hence that $\sigma$ is not a winning strategy for the challenger in $N$, as desired.
\end{proof}

Furthermore, Gitman showed that if $\kappa$ is $2$-iterable, then there is a proper class of $\omega$-Ramsey cardinals in $V_\kappa$, locating the consistency strength of $\omega$-Ramsey cardinals more finely in the large cardinal hierarchy, and essentially answering our Question \ref{2iterability}.

According to Nielsen, an easy adaption of arguments from \cite{MR2830435} shows that if there is no inner model of a strong cardinal, then for $\alpha\le\kappa$, $\alpha$-Ramsey cardinals are $\alpha$-Ramsey in $K$, answering our Question \ref{alphaRamseyDJ}.
Moreover, Nielsen and Welch have informed us that our filter games are closely related to the games $G_r(\kappa,\lambda)$ from \cite{MR2817562}, and that the strategic filter properties are closely related to the notions of \emph{very Ramseyness} from \cite{MR2817562}.
 
Moreover, Welch informed us that he showed that if the $\omega_1$-strategic filter property holds at a cardinal $\kappa$, and there is no inner model of a strong cardinal, then $\kappa$ is measurable in $K$. Thus in particular the existence of a cardinal with the $\omega_1$-strategic filter property is equiconsistent with the existence of a measurable cardinal. This provides a strong positive answer to Question \ref{strategicconsistencystrength}. It also yields an immediate negative answer to Question \ref{questiondeterminacy}. 

The results by Nielsen and Welch mentioned above are planned to be published in an upcoming paper of theirs.

\bibliographystyle{alpha}
\bibliography{class-forcing}

\end{document}